\documentclass[11pt, a4paper]{amsart}
 
\usepackage[euler-digits]{eulervm}

\usepackage{amsfonts, amsthm, amssymb, amsmath, stmaryrd}
\usepackage{mathrsfs,array}
\usepackage{eucal,fullpage,times,color,enumerate,accents}
\usepackage{url}

\usepackage{hyperref}
\hypersetup{
  colorlinks   = true,          
  urlcolor     = blue,          
  linkcolor    = teal,          
  citecolor   = orange             
}

\usepackage{color}
\usepackage{mathrsfs}
\usepackage{amssymb}
\usepackage{tikz}
\usepackage{tikz-cd}
\usepackage{bm}
\usepackage{enumerate}
\usetikzlibrary{calc}
\usepackage{subfigure}

\setlength{\marginparwidth}{1in}
\setlength{\marginparsep}{0in}
\setlength{\marginparpush}{0.1in}
\setlength{\topmargin}{0in}
\setlength{\headheight}{0pt}
\setlength{\headsep}{0pt}
\setlength{\footskip}{.3in}
\setlength{\textheight}{9.0in}
\setlength{\textwidth}{6in}
\setlength{\parskip}{4pt}

\title{Tropical compactification and the Gromov--Witten theory of $\mathbb{P}^1$}
\author{Renzo Cavalieri, Hannah Markwig, and Dhruv Ranganathan}
\date{\today}

\address{Department of Mathematics, Colorado State University}
\email{renzo@math.colostate.edu}

\address{Eberhard Karls Universit\"at T\"ubingen, Fachbereich Mathematik, Institut f\"ur Geometrie}
\email{hannah@math.uni-tuebingen.de}

\address{Department of Mathematics, Yale University}
\email{dhruv.ranganathan@yale.edu}

\newtheorem{theorem}{Theorem}
\newtheorem{corollary}[theorem]{Corollary}
\newtheorem{lemma}[theorem]{Lemma}
\newtheorem{proposition}[theorem]{Proposition}
\newtheorem{definition}[theorem]{Definition}

\newtheorem{construction}[theorem]{Construction}
\newtheorem{quasi-theorem}[theorem]{Quasi-Theorem}

\newtheorem{rem1}[theorem]{Remark}
\newenvironment{remark}{\begin{rem1}\em}{\end{rem1}}

\newtheorem{ex1}[theorem]{Example}
\newenvironment{example}{\begin{ex1}\em}{\end{ex1}}

\newtheorem{not1}[theorem]{Notation}

\newcommand{\A}{{\mathbb{A}}}           
\newcommand{\CC} {{\mathbb C}}

\newcommand{\PP}{\mathbb{P}}         
		
\newcommand{\RR} {{\mathbb R}}		
\newcommand{\ZZ} {{\mathbb Z}}


\newcommand{\trop}{t\!r\!o\!p}
\newcommand{\an}{a\!n}
\newcommand{\rub}{r\!u\!b}
\newcommand{\parm}{p\!a\!r}









\newcommand{\Mbar}{\overline{M}}






\begin{document}

\pagestyle{plain}
\maketitle

\begin{abstract}
We use tropical and nonarchimedean geometry to study the moduli space of genus $0$, stable maps to $\mathbb{P}^1$ relative to two points. This space is exhibited as a tropical compactification in a toric variety. Moreover, the fan of this toric variety may be interpreted as a moduli space for tropical relative stable maps with the same discrete data. As a consequence, we confirm an expectation of Bertram and the first two authors, that the tropical Hurwitz cycles are tropicalizations of classical Hurwitz cycles. As a second application, we obtain a full descendant correspondence for genus $0$ relative invariants of $\mathbb{P}^1$.
\end{abstract}

\setcounter{tocdepth}{1}
\tableofcontents
\pagebreak
\setcounter{section}{-1}

\section{Introduction}

\subsection{Main results} Let $\bm x = (x_1,\ldots, x_n) \in \ZZ^n$ be a collection of  non-zero integers satisfying 
$
\sum x_i = 0. 
$
Denote by $\bm x^+$ and $\bm x^-$ the positive and negative entries of $\bm x$. Let $\overline M(\bm x)$ be the coarse moduli space of rational rubber stable maps to $\PP^1$, relative to $0$ and $\infty$, having ramification over $0$ (resp. $\infty$) given by the entries in $\bm x^+$ (resp. $\bm x^-$). We mark the special ramification locus on the source curve. Let $M(\bm x)$ be the open set parametrizing maps from a smooth curve. We review the basics of relative stable maps in Section~\ref{sec: rsm-background}. 

Our main result identifies the space $\overline M(\bm x)$ as the closure of its open part in a toric variety, naturally associated to the space of tropical relative stable maps with ramification data given by $\bm x$. 

\begin{theorem}\label{thm: tropical-compactification}
There is a simplicial (noncomplete) toric variety $X(\Delta^{\rub}_{\bm x})$ with dense torus $T$, and an embedding $M(\bm x)\hookrightarrow T$, such that the closure of $M(\bm x)$ in $X(\Delta^{\rub}_{\bm x})$ is identified with the coarse moduli space $\overline M(\bm x)$. The fan $\Delta^{\rub}_{\bm x}$ of the toric variety is naturally identified with the tropical moduli space $\overline M^{\trop}(\bm x)$. 
The compactification of $M(\bm x)$ is sch\"on, meaning that  the intersection of $\overline M(\bm x)$ with each torus orbit is smooth. 
\end{theorem}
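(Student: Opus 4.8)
The plan is to realize $\overline M(\bm x)$ via the theory of tropical compactifications of subvarieties of tori, following Tevelev. The first step is to pin down the embedding $M(\bm x) \hookrightarrow T$: the torus $T$ should be built from the edge lengths of the tropical curves, and the map records the "position" and "modulus" data of a rubber map from a smooth rational curve — concretely, the cross-ratios of the marked ramification points together with the ramification multiplicities. One needs to verify this is a locally closed immersion, and then compute the tropicalization of the image. The claim of the theorem is that this tropicalization is the support of the fan $\Delta^{\rub}_{\bm x}$, which by the earlier identification is $\overline M^{\trop}(\bm x)$; this should follow from a direct analysis of one-parameter degenerations (valuations of points in $M(\bm x)$ over a valued field) and the compatibility of degenerating a relative stable map with the combinatorics of tropical relative stable maps.

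Given the tropicalization computation, the next step is to invoke the main theorem of tropical compactifications: since the tropicalization is the support of a fan $\Delta$, the closure $\overline{M(\bm x)}$ inside $X(\Delta)$ is proper, and it is a tropical compactification. The identification of this closure with the coarse moduli space $\overline M(\bm x)$ then comes from the universal/modular interpretation: both are proper, both contain $M(\bm x)$ as a dense open, and one exhibits a morphism between them (likely from $X(\Delta)$-side to the moduli space, using that the boundary strata of the toric variety index degenerate tropical maps, which in turn are the combinatorial types of relative stable maps appearing in the degeneration formula) that restricts to the identity on $M(\bm x)$; properness and separatedness upgrade this to an isomorphism. I expect one must be slightly careful that this is a statement about \emph{coarse} moduli spaces — automorphisms of rubber maps (the $\CC^*$ scaling, and symmetries among equal ramification multiplicities) are precisely what force the simplicial-but-not-smooth structure on $X(\Delta^{\rub}_{\bm x})$.

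The schön (sch'on) assertion is then almost formal: by Tevelev's characterization, a tropical compactification is schön if and only if the initial degenerations $\mathrm{in}_w(M(\bm x))$ are smooth for all weight vectors $w$, equivalently the multiplication map $T \times \overline{M(\bm x)} \to X(\Delta)$ is smooth and surjective, equivalently each stratum intersection is smooth. So the plan is to check smoothness of the initial degenerations directly: each such degeneration should be identified with a product of smaller moduli spaces $M(\bm x')$ of the same type (coming from edge contractions / vertex splitting in the tropical moduli space), which are smooth because $M(\bm x)$ — an open substack of a space of rational maps — is smooth, and this is stable under the recursive structure.

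The main obstacle I expect is the tropicalization computation: showing that the Berkovich/tropical image of $M(\bm x)$ is exactly the cone complex $\overline M^{\trop}(\bm x)$, with the right fan structure and multiplicities, requires a careful bijection between one-parameter families of rubber relative stable maps and balanced parametrized tropical curves, i.e. a tropicalization-of-relative-stable-maps dictionary. Establishing that this is a bijection onto the fan (surjectivity, via smoothing/gluing degenerate maps, and injectivity/properness) is the technical heart; the schön property and the identification of the closure as $\overline M(\bm x)$ are comparatively formal once the tropicalization and the modular interpretation of the strata are in hand.
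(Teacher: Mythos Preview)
Your proposal is in the right territory but misses the structural reduction that makes the paper's proof work. The paper does \emph{not} compute the tropicalization of $M(\bm x)$ or its initial degenerations from scratch; everything is leveraged off the known case of $\overline M_{0,n}$. The open locus $M(\bm x)$ is literally $M_{0,n}$: given $(C,p_1,\dots,p_n)$, the divisor $\sum x_i p_i$ is principal and determines the rubber map. So the embedding $M(\bm x)\hookrightarrow T$ is Kapranov's embedding of $M_{0,n}$, and its tropicalization is already known (Gibney--Maclagan) to be $\Delta_n = M_{0,n}^{\trop}$. Since $\Delta^{\rub}_{\bm x}$ is a \emph{subdivision} of $\Delta_n$ (Proposition~\ref{prop-subdivision}), the support of $trop(M(\bm x))$ lies in $\Delta^{\rub}_{\bm x}$ automatically --- there is no tropicalization computation to carry out.

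The identification of the closure with $\overline M(\bm x)$ is where your argument has a genuine gap: ``both proper, dense open agrees, exhibit a morphism, properness upgrades to isomorphism'' does \emph{not} force an isomorphism --- any blow-up of a proper variety is a counterexample. The paper instead uses toroidal modification theory. By Proposition~\ref{prop-functorialtroprubber}, the stabilization $st:\overline M(\bm x)\to \overline M_{0,n}$ is a proper birational toroidal morphism whose induced map of cone complexes is exactly the subdivision $\mathfrak{st}$. Toroidal modifications of a fixed toroidal scheme are uniquely determined by their subdivision of the cone complex (\cite{KKMSD}), so $\overline M(\bm x)$ and the strict transform of $\overline M_{0,n}$ in $X(\Delta^{\rub}_{\bm x})$ are the \emph{same} toroidal modification of $\overline M_{0,n}$, hence isomorphic. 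Sch\"on-ness then follows immediately from Tevelev's theorem, since $\overline M_{0,n}$ is already sch\"on and any tropical compactification of a sch\"on variety is sch\"on; no direct analysis of initial degenerations is required. In short, the pieces you flag as the ``technical heart'' are free in the paper's approach, and what you are missing is the identification $M(\bm x)=M_{0,n}$, the subdivision result, and the appeal to toroidal modifications to pin down the compactification.
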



We obtain two main consequences. The first is a correspondence theorem for the relative descendant Gromov--Witten theory of $\PP^1$. Denote by $\overline M_r(\PP^1,\bm x)$ the space of relative stable maps to a parametrized target $\PP^1$, with ramification over $0$ and $\infty$ given by $\bm x$, with $r$ additional non-relative markings. We use correlator notation, defining
\[
\langle \tau_{k_1}(pt),\ldots, \tau_{k_s}(pt),\tau_{k_{s+1}}(1),\ldots,\tau_{k_r}(1)\rangle = \int_{[\overline M_r(\PP^1,\bm x)]} \prod_{i=1}^s \hat \psi_i^{k_i}ev_i^\star(pt)\prod_{i=s+1}^r\hat \psi_i^{k_i}.
\]
Here $\hat \psi$ are the descendant classes, defined on the space of maps~\cite{OP06}. The tropical invariants are defined analogously, see, for example~\cite{MR08}.

\begin{theorem}\label{thm: descendant-correspondence}
We have an equality of classical and tropical genus $0$ relative descendant Gromov--Witten invariants of $\PP^1$:
\[
\langle \tau_{k_1}(pt),\ldots, \tau_{k_s}(pt)\rangle_{\PP^1}^0=
\langle \tau_{k_1}(pt),\ldots, \tau_{k_s}(pt)\rangle^0_{\PP^1,\trop}.
\]
\end{theorem}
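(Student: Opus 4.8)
The plan is to compute both sides of the claimed equality as intersection numbers on a single geometric object: on the classical side an (enlargement of the) rubber space $\overline M(\bm x)$ from Theorem~\ref{thm: tropical-compactification}, and on the tropical side the associated fan, which by that theorem \emph{is} the corresponding tropical moduli space. Since the integrand consists only of descendant classes and pullbacks of point classes, once these are recognized as combinations of toric boundary divisors and pullbacks of torus-invariant divisors on the target, the sch\"on property forces the classical number to coincide with the corresponding tropical stable intersection on the fan, which is by construction the tropical descendant invariant of \cite{MR08}.

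Concretely, I would first extend Theorem~\ref{thm: tropical-compactification} to the parametrized target with marked ends: the same tropicalization argument exhibits $\overline M_s(\PP^1,\bm x)$ as a sch\"on compactification of its interior in a simplicial toric variety whose fan is the moduli space $\overline M_s^{\trop}(\PP^1,\bm x)$ of tropical relative stable maps to $\PP^1$ with $s$ additional ends --- the combinatorial object on which \cite{MR08} define $\langle\tau_{k_1}(pt),\ldots,\tau_{k_s}(pt)\rangle^0_{\PP^1,\trop}$. (Alternatively one first reduces to the rubber by a rigidification argument in the spirit of \cite{OP06}, trading a point insertion for the passage from the rigid to the rubber target; the bookkeeping differs but the conclusion is the same.) Next I would identify the integrand on the toric model. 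In genus $0$ each descendant class $\hat\psi_i$ equals, up to the standard comparison correction supported on boundary loci where the $i$-th point sits on a contracted bubble, a $\psi$-class pulled back from a moduli space of the shape $\overline M_{0,\bullet}$, hence a sum of boundary divisors; under the identification of the fan with the tropical moduli space these are precisely the toric boundary divisors indexed by rays, so $\hat\psi_i$ restricts to the Kerber--Markwig tropical $\psi$-class appearing in \cite{MR08}. The condition $ev_i^\star(pt)$ is the pullback of a torus-invariant point class of $\PP^1$, which tropicalizes to the requirement that the $i$-th end pass through a fixed point of the tropical line.

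With the integrand so described, I would invoke the principle --- available because the compactification is sch\"on --- that the degree of a product of toric divisor classes, restricted to such a compactification, is computed by the fan-displacement / stable-intersection rule applied to the Minkowski weight carried by the tropicalization; after choosing the target points generically this agrees with the transversal tropical count. Assembling the pieces, $\int \prod_i \hat\psi_i^{k_i}\,ev_i^\star(pt)$ equals the stable intersection on $\overline M_s^{\trop}(\PP^1,\bm x)$ of the tropical $\psi$-classes against the point conditions, which is exactly $\langle\tau_{k_1}(pt),\ldots,\tau_{k_s}(pt)\rangle^0_{\PP^1,\trop}$.

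I expect the main obstacle to be this last matching at the level of multiplicities. One must verify, first, that the genus-$0$ $\psi$-class comparison holds as an identity of honest divisor classes on the compactification --- not merely a numerical one --- so that it may be tropicalized term by term; and second, that the lattice indices of the cones of $\Delta^{\rub}_{\bm x}$ entering the stable intersection reproduce the combinatorial multiplicities in \cite{MR08}. Both rest on the explicit description of the fan supplied by Theorem~\ref{thm: tropical-compactification}; by contrast, transversality and the reduction of the classical intersection to a computation on the fan are formal consequences of sch\"onness and are not where the difficulty lies.
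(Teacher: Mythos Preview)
Your proposal is correct and follows essentially the same route as the paper: pass to the parametrized compactification $\overline M_s(\PP^1,\bm x)$, realize it as a sch\"on tropical compactification in a simplicial toric variety whose fan is $\overline M_s^{\trop}(\PP^1_{\trop},\bm x)$, identify $\hat\psi_i$ and $ev_i^\star(pt)$ with tropical Cartier divisors, and then invoke Katz's theorem on intersections in sch\"on compactifications. Two small points of comparison: (i) the paper inserts an extra technical step you omit, namely completing $X(\Delta^{\parm}_{\bm x,s})$ to a \emph{complete} simplicial toric variety before applying \cite[Theorem~6.3]{Kat09}, since that result is stated for complete ambient toric varieties; (ii) rather than your boundary-divisor comparison for $\hat\psi_i$, the paper represents $\hat\psi_i$ by the locus of maps simply ramified at the $i$th mark and checks directly (Proposition~\ref{prop: psi-cycles}) that this tropicalizes to the Kerber--Markwig $\psi$-class, which sidesteps the correction-term bookkeeping you anticipate as the main obstacle.
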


The second consequence is exhibiting the tropical Hurwitz loci as a tropicalization. Fix $r$ to be the number of simple branch points of a generic cover with special ramification $\bm x$, and fix a positive integer $k<r$. Given a collection of points $\underline p = (p_1,\ldots, p_{r-k})\in (\RR)^{r-k}\subset (\PP^1_{\trop})^{r-k}$, one may construct a $k$-dimensional tropical Hurwitz locus $\mathbb H^{trop}_k(\underline p;\bm x)$,~\cite{BCM}. Hampe~\cite{H14} shows that such tropical Hurwitz cycles exhibit many of the combinatorial properties expected of a tropicalization, such as being connected through codimension $1$.   The following result confirms this expectation.
\begin{theorem}\label{thm: hurwitz-correspodnence}
The tropical Hurwitz loci $\mathbb H_k^{\trop}(\underline p;\bm x)$ are tropicalizations of  loci $H_k(\underline y;\bm x)\subset \overline M_{0,n}(K)$ where $K$ is a algebraically closed valued field extending $\CC$. The locus $H_k(\underline y;\bm x)$ represents a Hurwitz locus in $\overline M_{0,n}(K)$. 
\end{theorem}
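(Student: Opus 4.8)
The plan is to define a locus $Z(\underline y)\subseteq \overline M(\bm x)$ over $K$ by imposing branch conditions, to take $H_k(\underline y;\bm x)$ to be its image in $\overline M_{0,n}$, and to transport across the tropicalization map of Theorem~\ref{thm: tropical-compactification} the identity expressing a tropical Hurwitz cycle as the push-forward of a fibre of the tropical branch map. Recall that $\overline M(\bm x)$ carries a branch morphism $\operatorname{br}\colon \overline M(\bm x)\to B$, where $B$ parametrizes a choice of $r-k$ of the $r$ simple branch points of a cover with special ramification $\bm x$ (a quotient of a configuration space of points on the rubber target), together with a source morphism $\operatorname{src}\colon \overline M(\bm x)\to \overline M_{0,n}$ recording the $n$-pointed source curve; both are taken on coarse moduli spaces. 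By Theorem~\ref{thm: tropical-compactification} the inclusion $M(\bm x)\hookrightarrow T$ has sch\"on closure $\overline M(\bm x)$ in $X(\Delta^{\rub}_{\bm x})$ with $\Delta^{\rub}_{\bm x}=\overline M^{\trop}(\bm x)$; in these toric coordinates $\operatorname{br}$ is a monomial morphism and $\operatorname{src}$ is induced by a morphism of cone complexes, and these induced maps are precisely the tropical branch and stabilization maps of~\cite{BCM}. Since $\overline M_{0,n}$ is itself a sch\"on tropical compactification of $M_{0,n}$ with tropicalization $\overline M_{0,n}^{\trop}$ (Tevelev), we obtain commuting squares $\operatorname{trop}\circ\operatorname{br}=\operatorname{br}^{\trop}\circ\operatorname{trop}$ and $\operatorname{trop}\circ\operatorname{src}=\operatorname{src}^{\trop}\circ\operatorname{trop}$.

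Now fix an algebraically closed valued field $K$ extending $\CC$, for instance the field of Puiseux series over $\CC$. Choose $\underline y\in B(K)$ prescribing $r-k$ of the simple branch points, generic subject to $\operatorname{trop}(\underline y)=\underline p$, and set $Z(\underline y):=\operatorname{br}^{-1}(\underline y)\subseteq \overline M(\bm x)_K$. A dimension count along $\operatorname{br}$ shows that for generic $\underline y$ the scheme $Z(\underline y)$ is pure of dimension $k$. Define $H_k(\underline y;\bm x):=\operatorname{src}\big(Z(\underline y)\big)\subseteq \overline M_{0,n}(K)$; since $\operatorname{src}$ is generically finite, this is again pure of dimension $k$. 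By construction $H_k(\underline y;\bm x)$ parametrizes the $n$-pointed rational curves over $K$ that occur as the source of a degree $d$ cover of $\PP^1_K$ with ramification $\bm x$ over $0$ and $\infty$ and with $r-k$ of the simple branch points located at $\underline y$; that is, it is a Hurwitz locus, which disposes of the final sentence of the statement.

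It remains to identify $\operatorname{trop}(H_k(\underline y;\bm x))$ with $\mathbb H_k^{\trop}(\underline p;\bm x)$. Because $\operatorname{br}$ is monomial and $\underline y$ is generic above $\underline p$, the generic-fibre lifting theorems for tropicalization (in the spirit of Osserman--Payne) give an equality of weighted balanced complexes $\operatorname{trop}(Z(\underline y))=(\operatorname{br}^{\trop})^{-1}(\underline p)$, the sch\"onness in Theorem~\ref{thm: tropical-compactification} guaranteeing that the relevant fibres are themselves sch\"on so that no correction terms occur. Pushing forward under $\operatorname{src}$ and invoking the Sturmfels--Tevelev projection formula, we obtain
\[
\operatorname{trop}\big(H_k(\underline y;\bm x)\big)=\operatorname{src}^{\trop}_{\ast}\operatorname{trop}\big(Z(\underline y)\big)=\operatorname{src}^{\trop}_{\ast}\,(\operatorname{br}^{\trop})^{-1}(\underline p),
\]
and the right-hand side is, by the definition in~\cite{BCM}, exactly the tropical Hurwitz cycle $\mathbb H_k^{\trop}(\underline p;\bm x)$, with matching multiplicities.

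The main obstacle is the compatibility of $\operatorname{trop}$ with the operations of taking a fibre of $\operatorname{br}$ and of pushing forward along $\operatorname{src}$, together with the book-keeping of multiplicities. Since $\operatorname{src}$ is not literally a toric morphism, one must either realize the two sch\"on compactifications inside an ambient pair of toric varieties whose fans carry $\operatorname{src}^{\trop}$, or pass to a toric resolution, and then verify that the genericity conditions on $\underline y$ required for the fibre statement and for the projection formula, including agreement of multiplicities, are met on a dense subset of the $\underline y$ with $\operatorname{trop}(\underline y)=\underline p$; this is where the sch\"on hypothesis of Theorem~\ref{thm: tropical-compactification} does real work, as it propagates to the branch fibres and to their source images. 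A secondary, more combinatorial point is to match the weights attached to $\mathbb H_k^{\trop}(\underline p;\bm x)$ in~\cite{BCM} with the intrinsic tropical-cycle multiplicities produced by tropicalization; this should follow from the explicit description of the cones of $\overline M^{\trop}(\bm x)$ underlying Theorem~\ref{thm: tropical-compactification}, along the same lines as the descendant correspondence of Theorem~\ref{thm: descendant-correspondence}.
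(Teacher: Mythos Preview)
Your approach differs from the paper's in a way that introduces a real gap. The paper does \emph{not} work on the rubber space $\overline M(\bm x)$ with a branch map. Instead it works on the parametrized space $\overline M_r(\PP^1,\bm x)$ with $r$ additional non-relative marks, and it uses the definition of the tropical Hurwitz cycle given in Section~\ref{sec: hurwitz-loci} as
\[
\mathbb H_k^{\trop}(\underline p;\bm x)=\mathfrak{ft}_\star\Big(\prod_{i=1}^{r-k}\hat\psi_i^{\trop}\,\mathfrak{ev}_i^\star(p_i)\cdot\prod_{j=r-k+1}^{r}\hat\psi_j^{\trop}\Big).
\]
The proof then invokes Propositions~\ref{prop: ev-fibers} and~\ref{prop: psi-cycles}, which identify each factor $\mathfrak{ev}_i^{-1}(p_i)$ and $\hat\psi_i^{\trop}$ as the tropicalization of an explicit subvariety of $\overline M_r(\PP^1,\bm x)(K)$ (namely $ev_i^{-1}(y_i)$ for a lift $y_i$ of $p_i$, and the locus of maps simply ramified at the $i$th mark). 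The locus $\widetilde H_k(\underline y;\bm x)$ is the intersection of these representatives, and $H_k(\underline y;\bm x)$ is its image in $\overline M_{0,n}(K)$.

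Your setup has two concrete problems relative to this. First, on the rubber space there is no way to prescribe $r-k$ branch points at specified locations $\underline y$: the branch divisor is only defined up to the rubber action and up to $S_r$, so your morphism $\operatorname{br}\colon\overline M(\bm x)\to B$ with $B$ recording $r-k$ chosen branch points is not well-defined as stated. This is precisely why the paper passes to the parametrized space with extra marks. Second, and more seriously, your fibre $Z(\underline y)=\operatorname{br}^{-1}(\underline y)$ only encodes the $r-k$ evaluation conditions; it carries no analogue of the remaining $\hat\psi_j$ factors for $j=r-k+1,\dots,r$, which in the paper's definition impose simple ramification at the last $k$ marks. Without those, your $Z(\underline y)$ does not represent the cycle whose tropicalization is $\mathbb H_k^{\trop}(\underline p;\bm x)$. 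The paper handles this by representing every $\hat\psi_i$, including the unpaired ones, by the simple-ramification locus $Z_i$ and appealing to Proposition~\ref{prop: psi-cycles}.
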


Theorem~\ref{thm: hurwitz-correspodnence} gives us control of the basic combinatorial structure of the Hurwitz loci.

\begin{corollary}
The loci $\mathbb H_k^{\trop}(\underline p;\bm x)$ are connected through codimension $1$, and may be given the structure of a balanced polyhedral cone complex. Moreover, the tropical Hurwitz loci for different choices of $p_i$ are tropically rationally equivalent. 
\end{corollary}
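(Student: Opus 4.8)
The plan is to deduce all three assertions directly from Theorem~\ref{thm: hurwitz-correspodnence}, which realizes $\mathbb H_k^{\trop}(\underline p;\bm x)$ as the tropicalization of a subvariety $H_k(\underline y;\bm x)$ of $\overline M_{0,n}(K)$. First I would recall that, since $\overline M(\bm x)$ is sch\"on inside the toric variety $X(\Delta^{\rub}_{\bm x})$ by Theorem~\ref{thm: tropical-compactification}, the tropicalization map is well-behaved: the Hurwitz locus $H_k(\underline y;\bm x)$ inherits from the ambient sch\"on compactification the property that its tropicalization is pure of the expected dimension $k$ and carries natural multiplicities. The key input is then the structure theorem for tropicalizations of irreducible subvarieties of tori (Bieri--Groves together with the balancing of~\cite{ST08}, in the form used in tropical intersection theory): the tropicalization of an irreducible $k$-dimensional variety over a valued field is a connected, balanced, weighted polyhedral complex of pure dimension $k$, and is connected through codimension one. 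Applying this to $H_k(\underline y;\bm x)$, and transporting the conclusion along the identification of fans in Theorem~\ref{thm: tropical-compactification}, gives the polyhedral cone complex structure, the balancing, and connectedness through codimension one for $\mathbb H_k^{\trop}(\underline p;\bm x)$.

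For the final claim — tropical rational equivalence for different choices of the $p_i$ — I would argue as follows. The points $\underline y$ (over $K$) lifting the $\underline p$ trace out a family: as $\underline p$ varies in $(\RR)^{r-k}$, the loci $H_k(\underline y;\bm x)$ are fibers of a single flat family over (an open subset of) a product of $\PP^1$'s, or equivalently they are all linearly equivalent divisorial-type intersections obtained by pulling back point classes under the branch morphism $\br\colon \overline M_r(\PP^1,\bm x)\to (\PP^1)^{r-k}$ (after forgetting the appropriate markings). Since any two points of $\PP^1$ are rationally equivalent, the cycles $\br^\star(\underline y)$ and $\br^\star(\underline y')$ are rationally equivalent in the ambient space, hence so are their restrictions to $\overline M(\bm x)$. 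Tropicalization sends rationally equivalent cycles on a sch\"on variety to tropically rationally equivalent cycles (this is the compatibility of \texttt{trop} with rational equivalence on toric compactifications, cf.~\cite{AR10}), which yields the tropical rational equivalence of $\mathbb H_k^{\trop}(\underline p;\bm x)$ and $\mathbb H_k^{\trop}(\underline p';\bm x)$.

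The main obstacle I anticipate is the last step: making precise the sense in which $H_k(\underline y;\bm x)$ varies in an algebraic family over the parameter space of branch points, so that one genuinely gets a rational equivalence in $\overline M(\bm x)$ (or in its ambient toric variety) rather than merely an equality of tropicalizations. This requires knowing that the branch morphism behaves well relative to the sch\"on compactification — i.e.\ that the point classes pulled back from $(\PP^1)^{r-k}$ meet the boundary properly and that their intersections with $\overline M(\bm x)$ are again sch\"on subvarieties whose tropicalizations compute the tropical Hurwitz loci. Once that compatibility is in hand, the rational equivalence statement and its tropical shadow follow formally; the connectedness and balancing statements, by contrast, are immediate consequences of the general structure theory of tropicalizations applied to Theorem~\ref{thm: hurwitz-correspodnence}.
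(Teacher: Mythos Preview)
Your approach is essentially what the paper intends: the corollary is stated in the introduction without a separate proof, and is meant to follow directly from Theorem~\ref{thm: hurwitz-correspodnence} together with the standard structure theory for tropicalizations of subvarieties of tori. Two remarks are worth making.

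First, for the rational equivalence claim the paper already has the needed ingredient in hand, and more directly than your route. Proposition~\ref{prop: ev-fibers} shows that the tropical evaluation fibers $\mathfrak{ev}_i^{-1}(p)$ are tropically rationally equivalent for different $p\in\RR$, by lifting to algebraic fibers $ev_i^{-1}(x)$ over rationally equivalent points of $\PP^1$. Your argument via a morphism to $(\PP^1)^{r-k}$ is the same idea, but note that this map is the product of evaluation morphisms, not the branch morphism $br$ defined in the paper (which lands in a symmetrized Losev--Manin space). Using Proposition~\ref{prop: ev-fibers} avoids the compatibility verification you flag as an obstacle.

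Second, your appeal to the structure theorem for connectedness through codimension~$1$ (Cartwright--Payne) requires the classical locus $H_k(\underline y;\bm x)$ to be \emph{irreducible}, not merely a subvariety; balancing and pure-dimensionality hold for arbitrary tropicalizations, but connectedness in codimension~$1$ does not. You invoke irreducibility implicitly but do not justify it. In genus~$0$ this holds, but it deserves a sentence.
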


The definitions of the tropical and classical Hurwitz loci are recalled in Section~\ref{sec: hurwitz-loci}. 

\subsection{Context and Motivation} The current work contributes to the authors' long term program  of exploring the tight interconnection between the tropical and classical theories of moduli spaces of maps to curves. The two highlights of this work are the following: first, that the combinatorics of the tropical moduli spaces reflects and informs the birational geometry of classical ones. Second, that several ``geometric" tautological classes, including evaluation cycles, psi classes, and Hurwitz loci, tropicalize in a natural way, exhibiting a solid conceptual connection between the classical and tropical intersection theories. When the source curves have genus $0$, the (open) moduli space of maps can be embedded in a torus. This makes available standard techniques from tropical compactification~\cite{Tev07} and toric intersection theory~\cite{FS97,Kat09}. In higher genus, the natural framework for tropicalization exploits the toroidal structure of these moduli spaces. Tropical intersection theory remains undeveloped in this setting.

\subsubsection{Relative stable maps as birational models for $\overline M_{0,n}$}The spaces $\overline M(\bm x)$ of rubber relative stable maps are birational to $\overline M_{0,n}$. The natural forgetful morphism
\[
st: \overline M(\bm x)\to \overline M_{0,n},
\]
is an isomorphism over the open set $M_{0,n}$. Theorem~\ref{thm: tropical-compactification} may be interpreted as stating that tropical geometry completely determines these birational models. This statement is made precise in the following manner. Let $T$ be a torus over $\CC$, endowed with the trivial valuation, with one-parameter subgroup lattice $N$. Given any subvariety $Y \hookrightarrow T$, we obtain a set $trop(Y)\subset N_\RR$. Choose a fan $\Delta$ in $N_\RR$, such that $trop(Y)$ is supported on $\Delta$. For every fan $\Delta$, we obtain a compactification $\overline Y$ of $Y$, by taking the closure of $Y$ in the toric variety $X(\Delta)$~\cite{Tev07}. The condition of $trop(Y)$ being supported on the cones of $\Delta$ ensure that $\overline Y$ intersects only those torus orbits of complementary dimension. 

Given ramification data $\bm x$, we obtain compactifications of the moduli space $M_{0,n}$ following the above procedure. We may embed the very affine variety $M_{0,n}$ in a torus $T$ using the procedure described by Kapranov, which we recall in Section~\ref{sec: trop-compactification-rsm}. As above, fixing a fan structure on the tropicalization $trop(M_{0,n})$,  we obtain compactifications of $M_{0,n}$. In particular, we may choose the fan structure to be the tropical moduli space $M^{\trop}(\bm x)$. Theorem~\ref{thm: tropical-compactification} is then asserting that the compactification obtained via this fan structure is precisely the space $\overline M(\bm x)$ of rubber maps with ramification data $\bm x$. 

Fixing the length $n$ of the tuple $\bm x$ and letting the entries vary along the sum-zero sublattice one obtains a family of birational models of $\overline{M}_{0,n}$, such that the dual intersection complex of the toroidal boundary is naturally identified with the tropical moduli space; such complexes remain combinatorially equivalent in chambers defined by the resonance hyperplanes. This  corroborates the expectation that the piecewise polynomiality of Hurwitz cycles \cite{BCM} and of Hurwitz numbers \cite{GJV} arises from the variation of combinatorial structure of the toroidal boundary of the relevant moduli spaces.



\subsubsection{Tropical computations of tautological intersection numbers} There has been a great deal of progress in the last decade concerning correspondence theorems for intersection numbers on moduli spaces of curves and maps, since Mikhalkin's celebrated results~\cite{Mi03} on curve counting in toric surfaces. 
The current work  enlarges the scope of the classical/tropical intersection theory correspondence for moduli spaces of maps with target curves, from Hurwitz numbers \cite{BBM,CJM1, CMR14} to descendant relative Gromov-Witten invariants of $\PP^1$.

Besides this, an interesting feature of the current work is that the techniques employed seem to be suitable for being generalized and applied to a broader context.
Recent work of A.\ Gross~\cite{Gro14} studies  invariants arising from moduli spaces of rational curves in arbitrary toric varieties, with incidence conditions defined by evaluation morphisms. 
The approach is to embed the open part of the moduli space of maps into a torus, without compactification, and obtain correspondence by applying lifting theorems~\cite{OP,OR} to tropical intersections. A feature of his work is that the embedding into a torus allows him to work with non-complete moduli spaces. 
Rau and the second author~\cite{MR08}, and independently M.\ Gross~\cite{Gro10} and Overholsher~\cite{Over15}, proved that a certain sector of the genus $0$ descendant theory of $\PP^2$ may be computed on the tropical side. However, the result for $\PP^2$ remains incomplete, and in particular, a correspondence for genus $0$ (non-logarithmic) invariants with one descendant insertion that is not paired with a point class is missing. We hope that progress for target $\PP^1$ will lend insight into this problem. 

In dimension $2$ and higher, the toric boundary is not a smooth divisor, and there do not exist relative theories based on expanded degenerations of the target along the toric boundary. Instead, one might appeal to the recently developed framework of logarithmic stable maps~\cite{AC11, Che10, GS13}. For $\PP^1$, the logarithmic and relative theories are closely related~\cite[Appendix B]{AMW12}, and thus, we view a rigorous study of tropical descendants in $\PP^1$ as the natural first step in completing the descendant correspondence for $\PP^2$, developing tropical computations of logarithmic invariants, and further generalizations thereof. 

\subsection{Recent progress and future directions} Since this preprint appeared on the arXiv, there has been additional progress in this area of study. In~\cite{R15b}, the third author extends the results in this paper to logarithmic stable maps from genus $0$ curves to toric targets, recovering the geometric description of the space of maps as a modification of $\overline M_{0,n}$. Using these ideas, A. Gross extended the computation of logarithmic descendants to this setting~\cite{Gro15}. 


A natural generalization is to study spaces of relative stable maps from higher genus curves to toric targets. Even when the target is $\PP^1$, such spaces can be quite badly behaved -- they are singular, reducible, and nonequidimensional. Establishing correspondence theorems for higher genus invariants requires the development of virtual cycles in the tropical setting, which is at the same time a daunting and exciting task.

\subsection*{Acknowledgements} RC is grateful for the support from  NSF
grant DMS-1101549, NSF RTG grant 1159964. HM was partially supported by DFG-grant MA 4797/6-1. During the preparation of this work, we benefited from helpful conversations with friends and colleagues, including Dan Abramovich, Aaron Bertram, Noah Giansiracusa, Mark Gross, Dave Jensen, Diane Maclagan, Martin Ulirsch and Jonathan Wise. The third author thanks Sam Payne in particular for his constant encouragement and numerous insightful conversations regarding many ideas central to this work. We would like to thank an anonymous referee for helpful comments on an earlier version of this paper.


\section{Preliminaries}\label{sec-prelim}

The methods used to prove many of the results in this paper require standard facts from the theory of toric varieties, geometric tropicalization, and toroidal embeddings. We provide a rapid outline of the relevant concepts, leaving further details to the literature. The reader is referred to the survey~\cite{Pay15} for an introduction to Berkovich spaces and to~\cite{ACMUW} and~\cite{U13} for an overview of the theory of skeletons. 

We work over the field $\CC$ topologized by the trivial valuation. Let $T$ be a torus with character lattice $M$. Let $\Delta$ be a fan in $N_\RR = Hom(M,\RR)$ and let $X = X(\Delta)$ be the associated toric variety~\cite{Ful93}. The Berkovich analytification (hereafter simply \textit{analytification}) of the dense torus $T^{an}$ admits a continuous \textit{tropicalization} map
\[
trop: T^{an}\to N_\RR,
\]
which extends to the map 
\[
trop: X^{an}\to \overline \Delta,
\]
referred to as the extended tropicalization map~\cite{Pay09}. The target $\overline \Delta$ is a compactification of the vector space $N_\RR$ that is stratified in a manner reflecting the stratification of $X$ into torus orbits. More precisely, $\overline \Delta$ is the canonical compactification of the fan $\Delta$ obtained as follows. Let $\sigma$ be a cone of $\Delta$, and let $\sigma^{\vee}$ be the dual cone, and $S_\sigma = \sigma^\vee \cap M$. Recall that $\sigma$ can be recovered from $S_\sigma$ as the space of monoid homomorphisms
\[
\sigma = Hom_{\textnormal{mon}}(S_\sigma, \RR_{\geq 0}).
\]
Correspondingly, the compactification $\overline \sigma$ is obtained as the space of monoid homomorphisms
\[
\overline \sigma = Hom_{\textnormal{mon}}(S_\sigma,\RR_{\geq 0}\sqcup \{\infty\}). 
\]
The extended cone $\overline \sigma$ is compact and contains $\sigma$ as a dense open subset. The \textit{extended cone complex} $\overline \Delta$ is then obtained by gluing these extended cones $\overline \sigma$ in the natural way. 

Given a subvariety $Y\hookrightarrow X$, there is a map 
\[
trop: Y^{an}\to \overline \Delta,
\]
obtained by restricting $trop$ via $Y^{an}\hookrightarrow X^{an}$. The notation $trop(Y)$ will be used as shorthand for $trop(Y^{an})$. Since we work in the case where the ground field is trivially valued, the set $trop(Y\cap T)$ inherits the structure of a balanced fan\footnote{If we work over a general nonarchimedean field $K$, this set will carry the structure of a balanced polyhedral complex.}~\cite{MS14}.

Let $Y$ be an $\ell$-dimensional subvariety of $X(\Delta)$. Then $Y$ is said to \textit{intersect the toric boundary properly} if and only if for all orbit closure $V(\tau)$ in $X$, $\dim(Y\cap V(\tau)) = \ell+\dim(V(\tau))-n$. It is a standard fact~\cite{Gub13} that $Y$ intersects the toric boundary properly if and only if $trop((Y\cap T)^{an})$ is a union of cones of $\Delta$. 

We are interested in \textit{tropical compactification}. Let $T$ be a torus, and let $Y$ be a subvariety of $T$. Fix a fan structure $\Delta$ on $trop(Y)$, and let $X(\Delta)$ be the associated toric variety, and $\overline Y$ the closure of $Y$ in $X(\Delta)$. If $\overline Y$ is proper and the multiplication map 
\[
\overline Y\times T\to X(\Delta)
\]
is faithfully flat, we refer to $\overline Y$ as a \textit{tropical comapctification} of $Y$. Moreover, such a  compactification is called sch\"on if the intersection of $\overline Y$ with each torus orbit is smooth. The intersection theory of sch\"on tropical compactifications is closely related to the tropical intersection theory of the fan $\Delta$. In particular, in many cases, degrees of zero cycles on $\overline Y$ may be computed from the tropical intersection theory of $\Delta$~\cite{Kat09}. 

The tropicalization map on a torus or toric variety depends on a global choice of monomial coordinates given by the character lattice of the torus. More generally, one may ask for such a map when there is a consistent choice of \textit{local} monomial coordinates, in an appropriate sense. This is formalized by work of Thuillier~\cite{Thu07} and Abramovich, Caporaso, and Payne~\cite{ACP}. A convenient and flexible framework that includes toroidal embeddings is that of logarithmic structures~\cite{U13}. The toroidal case suffices for our purposes. 

A toroidal embedding (without self-intersection) is a normal variety $X$, together with an open embedding $U\hookrightarrow X$ which locally looks like the inclusion of the dense torus into a toric variety~\cite{KKMSD}. More precisely, it is the data of  $U\hookrightarrow X$ such that at every point $p\in X$, there exists a Zariski open neighborhood $V$ of $p$, and an \'etale map $\psi:V\to V_\sigma$ to an affine toric variety, such that 
\[
V\cap U = \psi^{-1}T,
\]
where $T$ is the dense torus in $V_\sigma$. Analogously, one may define a toroidal Deligne--Mumford stack~\cite{ACP}. The cones $\sigma$ associated to the local models can be glued  to form a polyhedral cone complex $\Sigma(X)$, and the canonical compactifications $\overline \sigma$ can be glued to form an extended cone complex $\overline \Sigma(X)$. There exists a tropicalization map
\[
trop:X^{an}\to \overline \Sigma(X),
\]
sending $U^{an}$ to $\Sigma(X)$. This map is often also referred to as the \textit{projection to the Thuillier skeleton.} Every toric variety is of course a toroidal embedding, and it is an elementary fact that the two notions of tropicalizations coincide. That is, if $X = X(\Delta)$ is a toric variety, then in the above notation, $\overline \Sigma(X)=\overline \Delta$. The substantial difference between the polyhedral complexes arising as skeletons of toroidal embeddings is that these polyhedral complexes do not come with a natural embedding in a vector space. Finally, there exists a canonical continuous section $s:\overline\Sigma(X)\hookrightarrow X^{an}$, realizing the cone complex as a closed topological subspace of the analytification~\cite{Thu07}. 

Examples of toroidal embeddings without self-intersection include the moduli spaces $\overline M_{0,n}$ of $n$-pointed rational curves, the Kontsevich spaces $\overline M_{0,n}(\PP^r,d)$, and the spaces of relative stable maps to $\PP^1$ considered in this paper. In each case, the toroidal structure is given by the complement of a divisor with (stack theoretic) strict normal crossings. 

\section{Relative stable maps}\label{sec: rsm-background}

To begin, we briefly recall the theory of relative stable maps to  $\PP^1$. We consider separately the cases where $\PP^1$ is parametrized, and where two maps are considered equivalent when they differ by post-composition by the torus action  on $\PP^1$ fixing $0$ and $\infty$. The latter is often referred to as the case of \textit{rubber target}. For a more detailed account we refer to~\cite[Section 5]{Vak08} for maps to parametrized $\PP^1$ and to~\cite[Section 1.5]{MP06} for  rubber maps. 

We fix a tuple of integers $\bm x\in \ZZ^n$, with $\sum x_i=0$. We separate the positive and negative parts of this tuple, to obtain tuples $\bm x^+$ and $\bm x^-$, and set $d$ to be the sum of the positive entries. We are interested in genus $0$ covers of genus $0$ curves with ramification over $0$ and $\infty$ specified by $\bm x^+$ and $\bm x^-$, respectively. \label{x}

\subsection{Maps to a parametrized target.}\label{subsec-classicalparam} We focus on the theory of genus $0$ relative stable maps to $\PP^1$, relative to two points $0$ and $\infty$. 

A genus $0$ map to $\PP^1$ relative to $0$ and $\infty$, denoted $[f:C\to \PP^1]$ as shorthand, consists of the following data:
\begin{enumerate}[({A}1)]
\item \textbf{Map to the expansion.} A map $f_1$ from a genus $0$, Deligne--Mumford semi-stable, $n$-pointed curve $(C,p_1,\ldots, p_n)$ to a chain of rational curves 
\[
T = T_{-k}\cup T_{-k+1}\cup \cdots \cup T_0 \cup T_1 \cup \cdots \cup T_\ell,
\]
where $T_i$ and $T_{i+1}$ meet, with distinguished points $\infty_T\in T_{-k}-T_{-k+1}$ and $0_T\in T_{\ell}-T_{\ell-1}$. 
\item \textbf{Contraction to the parametrized component.} A contraction map $\mathfrak c:T\to \PP^1$, contracting $T_{-i}$ to $0$ and $T_i$ to $\infty$, for $i>0$, and furnishing an isomorphism
\[
(T_0, T_0\cap T_1, T_0\cap T_{-1}) \to (\PP^1,0,\infty).
\]
\item \textbf{Ramification.} For $x_i$ positive (resp. negative) the map $f_1$ is ramified over $0_T$ (resp. $\infty_T$) at the point $p_i$, with index $|x_i|$. 
\item \textbf{Predeformability.} The preimage of each node of $T$ is a union of nodes of $C$. Furthermore, if $q$ is a node of $C$ and we lift $f$ to the normalization of source and target, the ramification orders at the two shadows of the node $q$ are equal. 
\end{enumerate}

An isomorphism of two relative maps is a commuting diagram
\[
\begin{tikzcd}
(C,p_1,\ldots, p_n) \arrow{rr}{\cong} \arrow[swap]{d}{f_1} & &(C',p'_1,\ldots, p'_n) \arrow{d}{f_1} \\
(T,0_T,\infty_T)\arrow[swap]{rr}{\cong} \arrow{dr} & &(T,0_T,\infty_T)\arrow{dl}\\
& (\PP^1,0,\infty) & \\
\end{tikzcd}
\]
where the horizontal arrows are isomorphisms

For every component $T_i$ of the rational chain $T$, we denote by $0$ and $\infty$ the two points $T_i\cap T_{i\pm1}$ where $T_i$ connects to the adjacent components, thus implicitly choosing local coordinates which identify the automorphims of $T_i$ that preserve the distinguished points with the standard torus action on the projective line.
\begin{definition}
We say that a component $C_j$ of the source curve $C$ is a \textit{trivial bubble} if it maps to a component $T_j$ of the expanded target $T$ as a degree $d$ map, fully ramified over $0$ and $\infty$ and unramified elsewhere, i.e. the map is of the form $[z_0:z_1]\mapsto [z_0^d:z_1^d]$. 
\end{definition}

\begin{definition}
A relative map $[f:C\to \PP^1]$ is said to be \textit{stable} if $f_1$-contracted genus $0$ components have at least $3$ special points, and for $i\neq 0$, there exists a component $C'$ of $C$ mapping to $T$ that is not a trivial bubble. 
\end{definition}

\subsection{Maps to a rubber target}\label{subsec-classicalrubber}
We also consider maps to $\PP^1$, with specified ramification over $0$ and $\infty$, but where the target is unparametrized: two maps to $\PP^1$ that differ by a multiplicative constant are considered equivalent.  These spaces arise naturally when analyzing contributions at the fixed locus to the localization formula, applied to the parametrized spaces~\cite{MP06}. A relevant geometric feature is that for these spaces the stabilization morphism has zero dimensional fibers.

Formally, a relative map to a rubber $(\PP^1,0,\infty)$, is a relative map in the definition above, without the data of a contraction to a parametrized component. Furthermore, an isomorphism of relative maps to a rubber $(\PP^1,0,\infty)$ is a commuting diagram,
\[
\begin{tikzcd}
(C,p_1,\ldots, p_n) \arrow{rr}{\cong} \arrow{d} & &(C',p'_1,\ldots, p'_n) \arrow{d} \\
(T,0_T,\infty_T)\arrow[swap]{rr}{\cong} & &(T,0_T,\infty_T)\\
\end{tikzcd}
\]
where horizontal arrows are isomorphisms. We denote maps to rubber targets by $[C\to T]$, since there isn't a privileged component of the rational chain to yield a well defined contraction morphism. The lack of a parametrized component implies that in order to have finite stabilizers, there cannot be components of the expanded target which are mapped to only by trivial bubbles.

\begin{definition}
A relative map to a rubber $(\PP^1,0,\infty)$ is said to be stable if contracted components have at least $3$ special points, and for any component $T_i$ of the expanded target, there exists a component $C'$ of $C$ mapping to $T_i$ that is not a trivial bubble. 
\end{definition}

\subsection{Moduli spaces and tautological morphisms}\label{subsec-classicalmoduli}
Both for a parametrized and a rubber target, there exist moduli spaces parametrizing relative stable maps with ramification prescribed by $\bm x$. It is also useful to mark not only the ramification points over $0_T$ and $\infty_T$, but also additional points. We 
denote by $\Mbar_r(\PP^1,\bm x)$ the space of isomorphism classes of $n+r$-marked stable maps to a parametrized $\PP^1$, relative to $0$ and $\infty$, with ramification data given by $\bm x$. By convention, the first $n$ marked points are the ramification points over $0_T$ and $\infty_T$ (\textit{relative markings}). We denote the corresponding space of rubber maps $\Mbar_r(\bm x)$. If there are no additional markings, i.e.\ $r=0$, we just drop the subscript and write $\Mbar(\PP^1,\bm x)$ resp.\ $\Mbar(\bm x)$ for the rubber space.

There is always a forgetful morphism from the space of maps to a parametrized target to the rubber space. 
\[
\epsilon: \Mbar_r(\PP^1,\bm x)\to \Mbar_r(\bm x).
\]
This map forgets the contraction to the main component, and stabilizes covers where only trivial bubbles lie above the main component. 

When the target is parametrized, there are natural evaluation morphisms to the main component
\begin{eqnarray*}
ev_i: \Mbar_r(\PP^1,\bm x)&\to& \PP^1 \\
{[f: (C,p_1,\ldots,p_r)\to T\to \PP^1]}&\mapsto& f(p_i).
\end{eqnarray*}

\label{tautologicalmorphisms}

There are two additional tautological morphisms on these moduli spaces. We introduce these on the rubber space, where they are better suited to our purposes. Firstly, we have a stabilization morphism 
\[
st:\Mbar_r(\bm x)\to \overline M_{0,n+r}, 
\]
taking a map $[C\to T]$ to its marked source curve $C$. Secondly, we have a branch morphism to (a quotient) of the Losev--Manin space, 
\[
br: \Mbar(\bm x)\to [\overline M_{0,2+(n-2)\cdot \varepsilon}/S_{n-2}]
\]
sending a map $[C\to T]$ to its base curve $T$, with heavy marks at $0_T$ and $\infty_T$, and light marks with appropriate multiplicities yielding the remaining  of the branch divisor for the map. 
We will refine the following proposition in the course of the main result. 

\begin{proposition}\label{prop-stab}
The stabilization morphism $st:\overline M_r(\bm x)\to \overline M_{0,n+r}$ is birational, and an isomorphism when restricted to the open set $M_r(\bm x)$ consisting of maps from smooth curves.
\end{proposition}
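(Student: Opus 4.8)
The plan is to construct an explicit inverse to $st$ over the open dense locus $M_{0,n+r}\subset \overline M_{0,n+r}$, exploiting the elementary fact that on a smooth rational curve a principal divisor determines a rational function up to a multiplicative scalar, and that this scalar ambiguity is exactly the rubber equivalence. On points the construction is as follows. Given a smooth $(n{+}r)$-pointed rational curve $(C;p_1,\ldots,p_{n+r})$ representing a point of $M_{0,n+r}$, consider the divisor $D=\sum_{i=1}^n x_i p_i$ supported on the relative markings. Since $\sum x_i=0$ we have $\deg D=0$, so $D=\operatorname{div}(f)$ for a rational function $f\colon C\to\PP^1$, and any two such $f$ differ by a nonzero scalar. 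Regarding the target as a rubber $(\PP^1,0,\infty)$, post-composition with the torus action $t\mapsto ct$ replaces $f$ by $cf$, so the rubber class $[f\colon C\to\PP^1]$ is canonically determined by $(C;p_\bullet)$. It is a genus $0$ rubber relative stable map with ramification profile $\bm x^+$ over $0$ and $\bm x^-$ over $\infty$: predeformability is vacuous because $C$ is smooth, and stability holds since $C$ is irreducible of positive degree and is not a trivial bubble (equivalently $\bm x\neq(d,-d)$). This assignment lands in $M_r(\bm x)$ and is visibly a set-theoretic inverse to $st$ there.

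To promote this to a morphism $\iota\colon M_{0,n+r}\to\overline M_r(\bm x)$, I would run the construction in families. Over a base $B$ carrying a family $\pi\colon\cC\to B$ of smooth $(n{+}r)$-pointed rational curves with sections $\sigma_i$, the line bundle $\cO_{\cC}(\sum_{i=1}^n x_i\sigma_i)$ has relative degree $0$, hence (as $\pi_\ast$ of a relative-degree-$0$ bundle on such a family is a line bundle whose formation commutes with base change) the two divisors $\sum_{x_i>0} x_i\sigma_i$ and $\sum_{x_i<0}|x_i|\sigma_i$ differ as divisor classes by a line bundle pulled back from $B$. The corresponding pencil therefore defines a rational map from $\cC$ to $\PP^1$ over $B$, canonical up to the fiberwise $\mathbb{G}_m$-action that is the rubber identification, and fiberwise it is the map $f$ of the previous paragraph. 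Verifying predeformability and stability in families yields the morphism $\iota$, with image in $M_r(\bm x)$ and $st\circ\iota=\id$. Conversely, for $[f\colon C\to T]\in M_r(\bm x)$ the source $C$ is smooth, hence by predeformability $T$ has a single component and $f\colon C\to\PP^1$ is a rational function with $\operatorname{div}(f)=\sum x_i p_i$; since $st$ records $(C;p_\bullet)$ and $\iota$ recovers the unique-up-to-scalar such function, $\iota\circ st=\id$ on $M_r(\bm x)$. Therefore $st$ restricts to an isomorphism $M_r(\bm x)\xrightarrow{\ \sim\ }M_{0,n+r}$, which is the second assertion.

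It remains to deduce birationality, for which it suffices to know that $M_r(\bm x)$ is dense in $\overline M_r(\bm x)$: then $st$ is a morphism restricting to an isomorphism on dense open subsets of the (irreducible) source and target. This density --- equivalently, irreducibility of the genus $0$ rubber relative stable map space of $\PP^1$, equivalently smoothability of the source of any such map --- is standard, and can also be extracted from properness of $st$ together with the zero-dimensionality of its fibers (the feature of rubber spaces recalled above) plus a combinatorial check that any rubber relative stable map whose stabilized source is smooth already has smooth source, i.e.\ that $st^{-1}(M_{0,n+r})=M_r(\bm x)$. I expect this combinatorial point --- ruling out stabilization-contractible components in the source by playing the stability condition against the ramification constraints forced over $0_T$ and $\infty_T$ (a contractible component carries at most one marking, yet a positive-degree end component must carry all of $\bm x^+$, and a single positive entry forces a trivial bubble, which stability forbids) --- together with the verification that the family construction above is well posed over the stack $\overline M_r(\bm x)$, to be the only steps demanding real attention; the rest is formal.
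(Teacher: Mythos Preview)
Your argument is correct and matches the paper's approach. The paper does not prove this proposition where it is stated, saying only ``We will refine the following proposition in the course of the main result.'' The core argument appears later, in the proof of Proposition~\ref{prop:int}: given $(C,p_1,\ldots,p_n)$, the divisor $\sum x_i p_i$ determines a rational function up to multiplicative constant, and the rubber equivalence absorbs that constant. This is exactly your construction of the inverse.

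You go further than the paper in two respects. First, you sketch the family version of the inverse, which the paper leaves implicit. Second, you explicitly raise the combinatorial point that $st^{-1}(M_{0,n+r})=M_r(\bm x)$, i.e.\ that a rubber stable map whose stabilized source is smooth already has smooth source. The paper does not isolate or address this. Your parenthetical argument for it is compressed to the point of being slightly inaccurate (``a positive-degree end component must carry all of $\bm x^+$'' is not literally true, since several components of $C$ may lie over the terminal component of $T$), but the mechanism you identify is right: any stabilization-contractible leaf component over an end of $T$ carries exactly one node and at most one relative mark, forcing it to be fully ramified at both special points and hence a trivial bubble; iterating over all components above the terminal piece of $T$ then contradicts rubber stability. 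One small caveat: your aside ``equivalently $\bm x\neq(d,-d)$'' ignores the $r$ additional marks and the precise definition of trivial bubble; this edge case deserves a moment's care but does not affect the main argument.
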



\section{Tropicalization for relative stable maps}

\subsection{Tropical relative stable maps} 
\subsubsection{Parametrized target}
The role of a parametrized target in tropical geometry is played by the extended cone complex $\PP^1_{\trop} = \RR\sqcup\{\pm \infty\}$, taken with its usual compactified fan structure. In particular, the cone $0\in \PP^1_{\trop}$ is distinguished, and plays the role of the main component on the classical side. This coincides with the extended tropicalization $trop(\PP^1)$. The expansion of the target on the classical side manifests on the tropical side as a polyhedral subdivision of $\PP^1_{\trop}$. 

Following standard conventions, an \textit{$n$-marked genus $0$ tropical curve} $\Gamma$ is a metrized dual graph of a marked rational nodal curve, where half edges corresponding to marks are metrized as $[0,\infty]$. That is, $\Gamma$ is a metric tree with finitely many leaf edges. Each leaf edge is metrized as the extended interval $[0,\infty]$. 
If the marked rational nodal curve is stable, we obtain a graph whose interior vertices are at least $3$-valent. 
The combinatorial type of an $n$-marked genus $0$ tropical curve is obtained by dropping the metrization data. The stabilization of (the combinatorial type of) an $n$-marked genus $0$ tropical curve is obtained by forgetting $2$-valent vertices (and adding the length of the two adjacent edges).

We continue to fix $\bm x $, a sum-zero vector of integers having positive and negative parts $\bm x^+$ and $\bm x^-$ respectively. We recall the following definition from~\cite{CJM1}. 

\begin{definition}\label{def-trop-parametrized}
A \textit{tropical relative stable map to parametrized $\PP^1_{\trop}$ with relative condition $\bm x$} is an $n$-marked genus $0$ tropical curve $\Gamma$, together with a balanced (or, harmonic) map $h: \Gamma\to \PP^1_{\trop}$ satisfying the following conditions.
\begin{enumerate}[(P1)]
\item The image of $\Gamma$ without its $1$-valent vertices lies inside $\RR$.
\item The map $h$ is piecewise linear, and linear on each edge $e\in \Gamma$, with nonnegative integer expansion factor $w(e)\in \ZZ_{\geq 0}$. 
\item If $x_i$ is positive (resp. negative), $h$ maps the marked end $p_i$
with expansion factor $x_i$ (resp. $-x_i$), and with $+\infty$ (resp. $-\infty$) belonging to the image of the end. 
\end{enumerate}
The full inverse image $h^{-1}(0)$ is (by convention) included as part of the vertex set of $\Gamma$. 
\end{definition}

The data obtained from a tropical relative stable map after dropping all metrization data on the source curve is referred to as a \textit{combinatorial type}. This includes the source graph $\Gamma$, whose vertex set includes all preimages of the main component, an ordering of the vertices according to their images in $\PP^1_{\trop}$, and all expansion factors on edges of $\Gamma$. The combinatorial data on the target is equivalent to that of a line graph, with $2$ infinite edges, and a distinguished vertex $0$.

\begin{remark}
A tropical stable map induces a polyhedral subdivision of the target. Given such a map $[\Gamma\to \PP^1_{\trop}]$, the image of the vertices of $\Gamma$ of valence larger than $2$ yields a finite collection of points on $\PP^1_{\trop}$. Following Gubler~\cite{Gub13} a polyhedral subdivision of a vector space, with recession fan $\Sigma$ produces a toric scheme over a valuation ring $R$, having generic fiber $X(\Sigma)$ and special fiber isomorphic to a collection of toric varieties glued along boundary strata. For $\Sigma = \PP^1_{\trop}$ as in our case, this toric scheme may be constructed from the trivial family $\PP^1\times Spec(R)$, by toric blowup in the special fiber. Consequently, the special fiber comes with canonical contraction morphism to the main component. It is elementary to observe that the (marked) dual complex of this special fiber reproduces the polyhedral subdivision, where the vertex $0$ is the component in the special fiber. Loosely speaking, the tropical stable map is already aware of the expansion of the target. A closely related notion in algebraic geometry is the stable logarithmic map to an unexpanded target associated to a logarithmic stable map to an expansion, as studied by Abramovich, Marcus and Wise in~\cite{AMW12}. 
\end{remark}

\subsubsection{Rubber target} The role of a rubber target $\PP^1$, relative to $0$ and $\infty$ is played by a line graph, with two infinite edges. In other words, it is the datum of a subdivided target of the parametrized tropical map obtained after forgetting the location of $0$. 

We say that a graph $\Gamma_T$ is a $2$-marked line graph if it is a line graph with half edges attached to the each of the two $1$-valent vertices. That is, it is of the form depicted in Figure~\ref{fig: rubber-target}. We refer to the two half edges as  the $\pm \infty$ ends.

\begin{figure}[h!]
\begin{tikzpicture}
\draw [densely dotted] (-2,0)--(-1.5,0);
\draw (-1.5,0)--(-1,0);
\draw  (-1,0)--(1,0);
\draw  (,0)--(1,0);
\draw (1,0)--(2,0);
\draw [densely dotted] (3,0)--(2.5,0);
\draw (2,0)--(2.5,0);
\draw [ball color=black] (-1,0) circle (0.5mm);
\draw [ball color=black] (0,0) circle (0.5mm);
\draw [ball color=black] (1,0) circle (0.5mm);
\draw [ball color=black] (2,0) circle (0.5mm);
\end{tikzpicture}
\caption{A typical target for a tropical rubber stable map.}
\label{fig: rubber-target}
\end{figure}
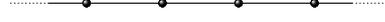

\begin{definition}\label{def-trop-rubber}
A tropical relative stable map to a rubber $\PP^1_{\trop}$ with ramification data $\bm x$ is an $n$-marked genus $0$ tropical curve $\Gamma$, a $2$-marked line graph $\Gamma_T$ and a morphism of metric graphs $g: \Gamma\to \Gamma_T$ subject to the following conditions. 
\begin{enumerate}[(R1)]
\item The finite edges and vertices of $\Gamma$ map to finite edges and vertices of $\Gamma_T$. 
\item The map $g$ is integer harmonic (not necessarily finite).  
\item If $x_i$ is positive (resp. negative), $g$ maps the marked end $p_i$ affinely to contain the end $+\infty$ (resp. $-\infty$) with expansion factor $x_i$. 
\item (Stability.) For every vertex $v$ of $\Gamma_T$, the preimage of $v$ contains a vertex of valence greater than $2$.
\end{enumerate}
\end{definition}

The data of the combinatorial types of source and target curves, with the morphism of graphs and with all expansion factors is referred to as a \textit{combinatorial type} of the relative stable map to rubber target.

\begin{remark}\label{rem-trop-parametrized}
Our formulation of Definition~\ref{def-trop-parametrized} is designed to make the connection to the literature clearer. At first glance, Definitions \ref{def-trop-parametrized} and \ref{def-trop-rubber} may seem unrelated. Their similarities are more apparent if we reformulate Definition \ref{def-trop-parametrized} as follows. 

\textit{
A tropical relative stable map to a parametrized $\PP^1_{\trop}$ with ramification data $\bm x$ is an $n$-marked genus $0$ tropical curve $\Gamma$, a $2$-marked line graph $\Gamma_T$ with a distinguished vertex $0$, and a morphism of metric graphs $g: \Gamma\to \Gamma_T$ such that
\begin{enumerate}[(P'1)]
\item The finite edges and vertices of $\Gamma$ map to finite edges and vertices of $\Gamma_T$. 
\item The map $g$ is finite, integer harmonic. 
\item If $x_i$ is positive (resp. negative), $g$ maps the marked end $p_i$ affinely to cover the end $-\infty$ (resp. $+\infty$) with expansion factor $x_i$. 
\item Stability: in the preimage of each vertex of $\Gamma_T$ except $0$, there is a more than 2-valent vertex.
\end{enumerate}
The equivalence to the data given in Definition \ref{def-trop-parametrized} follows after introducing $2$-valent vertices to $\PP^1_{\trop}$ at every image of a vertex (thus obtaining a line graph with a distinguished $0$), and introducing $2$-valent vertices to $\Gamma$ for every preimage of a vertex of the line graph.
}

\end{remark}

\subsection{Tropical moduli spaces and tautological morphisms}\label{subsec-tropicalmoduli}

Just as for classical relative stable maps, it is sometimes useful to have additional \textit{non-relative} marked ends, i.e.\ marked leaf edges which are  contracted by the map $g: \Gamma\to \Gamma_T$.

\begin{definition}
Let $\Theta = [\theta:\Gamma \to\Gamma_{T}]$ be a combinatorial type for a tropical relative stable map. An automorphism of $\Theta$ is a commuting square of automorphisms (of graphs) of source and target curves.
\[
\begin{tikzcd}
\Gamma \arrow{r}{\varphi_{\Gamma}} \arrow{d}[swap]{\theta} & \Gamma \arrow{d}{\theta}\\
\Gamma_{T} \arrow{r}[swap]{\varphi_{\Gamma_T}} & \Gamma_{T},
\end{tikzcd}
\]
where $\varphi_{\Gamma}$ and $\varphi_{\Gamma_T}$ are automorphisms preserving the expansion factors on each edge. 
\end{definition}

\begin{construction}\textnormal{
Standard constructions now allow us to build a tropical moduli space of relative stable maps, as in~\cite[Section 4]{ACP} and~\cite[Section 3.2]{CMR14}. This moduli space carries the structure of a polyhedral cone complex. Fixing a combinatorial type $\Theta = [\Gamma\to\Gamma_T]$, there is a polyhedral cone $\sigma_\Theta\cong \RR_{\geq 0}^B$ parametrizing metrizations of covers with combinatorial type $\Theta$, together with an identification of the underlying combinatorial type with $\Theta$. Here $B$ is the number of finite edges of the target $\Gamma_T$, plus the number of contracted edges of the source $\Gamma$. The (coarse) moduli space of tropical relative stable maps with combinatorial type $\Theta$ is identified with $\sigma_\Theta/Aut(\Theta)$. These cones glue together in the natural way to form a (coarse) moduli space of tropical relative stable maps. We denote the space of genus $0$ tropical relative stable maps with ramification data $\bm x$ and with $r$ additional markings, to parametrized $\PP^1_{\trop}$ by $ M_r^{\trop}(\PP^1_{\trop},\bm x)$. We denote the corresponding tropical rubber space $ M_r^{\trop}(\bm x)$.  By allowing the edge lengths of internal edges to become infinite, we obtain the canonical compactifications of these cone complexes, denoted by $\overline M_r^{\trop}(\PP^1_{\trop},\bm x)$ and $\overline M^{\trop}_r(\bm x)$. We drop the subscript $r$, when $r = 0$.}
\end{construction}

Just as in the classical setting, there are tropical tautological morphisms. These are morphisms of polyhedral complexes with integral structure. We denote these tropical tautological morphisms using fraktur letters. When the target is parametrized, there exist evaluation morphisms 
\[
\mathfrak{ev}_i: \overline M_r^{\trop}(\PP^1_{\trop},\bm x) \to \PP^1_{\trop},
\]
sending the a map $[\Gamma\to \PP^1_{\trop}]$ to the image of the $i$th marked end in $\PP^1_{\trop}$. 

For the rubber space, we also have a stabilization morphism

\[
\mathfrak{st}: \overline M_r^{\trop}(\bm x) \to \overline{M}_{0,n+r}^{\trop},
\]

sending a map $g: \Gamma\to \Gamma_T$ to the stabilization of $\Gamma$. Recall that on the tropical side, stabilization involves forgetting $2$-valent vertices and merging the edges they are incident to. Here, $\overline{M}_{0,n+r}^{\trop}$ is the moduli space of $(n+r)$-marked rational stable tropical curves, sometimes referred to as the space of phylogenetic trees~\cite{BHV01, SS04a, GKM07, GM07}.

Furthermore, there is a branch morphism

\[
\mathfrak{br}: \overline M_r^{\trop}(\bm x) \to [\overline M^{\trop}_{0,2+r\cdot \epsilon}/S_r]
\]
sending a map $g: \Gamma\to \Gamma_T$ to the line graph $\Gamma_T$. Here, $\overline M^{\trop}_{0,2+r\cdot \epsilon}$ denotes the tropical Losev-Manin space parametrizing line graphs with marked vertices, and $[\overline M^{\trop}_{0,2+r\cdot \epsilon}/S_r]$ its quotient under the action of the symmetric group permuting the vertex markings~\cite{CHMR14}.

Similar tropical moduli spaces and morphisms have been constructed in the literature before, e.g.\ in~\cite{GKM07, MR08, Mi07}. The following proposition (c.f. Proposition \ref{prop-stab}) follows from~\cite[Proposition 4.7]{GKM07}.

\begin{proposition}\label{prop-troprubber}
The stabilization morphism $\mathfrak{st}:\overline M^{\trop}_r(\bm x)\to \overline M^{\trop}_{0,n+r}$ is a bijection of the sets underlying the respective cone complexes.
\end{proposition}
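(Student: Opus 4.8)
The plan is to construct an explicit inverse to $\mathfrak{st}$, which establishes injectivity and surjectivity simultaneously; this is the combinatorial counterpart of the birationality statement of Proposition~\ref{prop-stab}, and it proceeds along the same lines as~\cite[Proposition 4.7]{GKM07}. The first and central observation is that, for any tropical relative stable map $[g\colon \Gamma\to\Gamma_T]$ with ramification data $\bm x$, the expansion factor on each edge of $\Gamma$ is \emph{forced}: since $\Gamma$ is a tree, deleting an internal edge $e$ disconnects it, and harmonicity of $g$ together with the prescribed weights $|x_i|$ on the relative ends and weight $0$ on the $r$ non-relative ends forces $w(e)$ to equal $\bigl|\sum_{i\in S}x_i\bigr|$, where $S$ is the set of relative marks on either one of the two sides (well defined because $\sum_i x_i=0$). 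In particular $e$ is contracted by $g$ exactly when this sum vanishes, and the same formula assigns weights directly to the edges of the stabilization $\stab(\Gamma)$. Because the target is one-dimensional and the source has genus $0$, there is no residual freedom here, in contrast with higher dimensions, where such weights are genuine extra data.

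Next I reconstruct the map to the target from an abstract stable curve $\Gamma'\in \overline M^{\trop}_{0,n+r}$. Orient the tropical line so that the ends carrying positive $x_i$ run off to $+\infty$; root $\Gamma'$ at a vertex, place that vertex at the origin of $\RR$, and propagate a position across each edge by adding (signed weight)$\times$(length). Harmonicity of the resulting map $\Gamma'\to\RR$ at a vertex $v$ is immediate from $\sum_i x_i=0$, since the signed weights leaving $v$ are precisely the sums $\sum_{i\in S_e}x_i$ over the subtrees $S_e$ hanging off $v$, and these subtrees partition all the marks. Subdividing $\RR$ at the vertex positions yields a $2$-marked line graph $\Gamma_T$; inserting a $2$-valent vertex of a graph $\Gamma$ over each subdivision point produces a harmonic morphism $g\colon\Gamma\to\Gamma_T$ whose source stabilizes back to $\Gamma'$. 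Every vertex of $\Gamma_T$ is then the image of an at-least-trivalent vertex of $\Gamma'$, so condition (R4) holds and we obtain a genuine point of $\overline M^{\trop}_r(\bm x)$. Passing to the rubber (translation) equivalence removes the dependence on the root, so the reconstruction is canonical; applied to every $\Gamma'$ it is a set-theoretic section of $\mathfrak{st}$, giving surjectivity, and applied to the source of a given $[g\colon\Gamma\to\Gamma_T]$ it recovers $\Gamma_T$, $g$, and all edge lengths, giving injectivity.

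I expect the only real work to be bookkeeping rather than ideas. One must verify that the $2$-valent vertices of $\Gamma$ are \emph{exactly} the preimages of vertices of $\Gamma_T$, so that stabilization discards precisely the redundant data and nothing else, and check that the metrics match up consistently: lengths of contracted source edges survive unchanged into the moduli coordinates, lengths of non-contracted source edges equal (length of the image target edge)$/w(e)$, and an edge of $\Gamma'$ whose two endpoints are forced to the same position in $\RR$ simply reappears as a contracted edge carrying its original length. One should also dispose of the degenerate low-valence cases ($n=2$, $r=0$) and of the infinite edge lengths appearing in the compactification, but none of this goes beyond the arguments of~\cite[Section 4]{GKM07}.
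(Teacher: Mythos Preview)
Your proposal is correct and follows exactly the route the paper indicates: the paper does not supply an independent argument but simply cites~\cite[Proposition~4.7]{GKM07}, and what you have written is a faithful unpacking of that reference---determining edge weights from the balancing condition on a tree, propagating vertex positions along $\RR$, and verifying that the stability condition (R4) pins down the target subdivision uniquely. There is nothing to add.
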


\subsubsection{Fan structures on tropical moduli spaces} In work of Gathmann, Kerber, and the second author, the above fact is used to endow the set $M^{\trop}_r(\bm x)$ with the structure of a simplicial balanced fan. This is done by lifting the fan structure on the moduli space $M^{\trop}_{0,n+r}$ via the above bijection. This fan structure is described as follows. The set $\overline M^{\trop}_{0,n+r}$ can be embedded as a simplicial balanced fan that we denote $\Delta_{n+r}$; via the so-called \textit{distance map}. This map sends a tropical curve to the vector of distances of its leaves \cite{GKM07, Mi07, SS04a}.\label{deltan} By abuse of notation, we identify a tropical curve with its image under this embedding in the following.
The rays of the fan $\Delta_n$ are given by tropical curves with only one interior edge and two vertices, and marked ends labeled by a subset $I\subset \{1,\ldots,n\}$ adjacent to one vertex while the ends labeled by $I^c$ are adjacent to the other vertex. Here, the size of both $I$ and $I^c$ is required to be at least two to satisfy the stability condition. Following standard notation, we denote the ray corresponding to the subset $I$ by $v_I$. Note that $v_I=v_{I^c}$ in this notation.

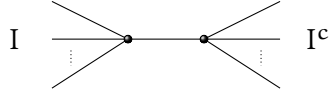
\begin{figure}[h!]
\begin{tikzpicture}
\draw (-1,0.5)--(0,0);
\draw (-1,0)--(0,0);
\draw (-1,-0.65)--(0,0);
\draw (0,0)--(1,0);
\draw (1,0)--(2,0.5);
\draw (1,0)--(2,0);
\draw (1,0)--(2,-0.65);

\draw [densely dotted] (-0.75,-0.15)--(-0.75,-0.35);
\draw [densely dotted] (1.75,-0.15)--(1.75,-0.35);

\draw [ball color=black] (0,0) circle (0.5mm);
\draw [ball color=black] (1,0) circle (0.5mm);

\node at (-1.5,0) {$I$};
\node at (2.5,0) {$I^c$};
\end{tikzpicture}
\caption{The combinatorial type associated to the ray $v_I = v_{I^c}$ of $M_{0,n}^{t\!r\!o\!p}$.}
\label{fig: rubber-target}
\end{figure}

Having a tropical moduli space that is a balanced fan allows the authors of~\cite{GKM07} to apply tropical intersection theoretic techniques to compute intersection numbers on $M^{\trop}_{0,n}$. For such applications, more refined fan structures on the set $M^{\trop}_{0,n}$ given by subdivisions are not important. However, in the present work, we observe that the fan structure given by the cones corresponding to combinatorial types of relative stable maps is essential to obtain deeper insights into the connection of the tropical spaces with their classical counterparts. 

From a toric perspective, a refinement $\Delta'\to \Delta$ yields a proper birational morphism $\varphi: X(\Delta')\to X(\Delta)$, which restricts to the identity on the dense torus. In other words, the refined fan structures are important only in situations where we work with a compactified moduli space. This is a contrast to the situations considered in~\cite{GKM07, Gro14}.

Let $\Sigma$ be a cone complex and suppose $\sigma = \langle v_1,\ldots, v_k\rangle$ is a simplicial cone. Let $v_\star = v_1+\cdots+v_k$. Let $\Sigma'(\sigma)$ be the collection of cones generated by subsets of $\{v_\star,v_1,\ldots, v_k\}$ that do not contain $\{v_1,\ldots, v_n\}$. The \textit{stellar subdivision} of $\Sigma$ along $\sigma$ is defined as
\[
\Sigma^\star(\sigma) = (\Sigma\setminus \{\sigma\})\cup \Sigma'(\sigma).
\]
This process introduces a new ray generated by $v_\star$. Analogously, given a vector of positive integer weights $\bm w = (w_1,\ldots, w_k)$, the $\bm w$-weighted stellar subdivision of $\sigma$ and $\Sigma$ is defined by introducing the new ray $v_{\bm w} = \sum_{i=1}^k w_iv_i$ in place of $v_\star$ throughout. 

The following proposition is a key ingredient for the proof of Theorem \ref{thm: tropical-compactification}.

\begin{proposition}\label{prop-subdivision}
The stabilization morphism $\mathfrak{st}:\overline M^{\trop}_r(\bm x)\to \overline M^{\trop}_{0,n+r}$ induces an iterated weighted stellar subdivision of simplicial fans. We denote the subdivided fan by $\Delta_{\bm x,r}^{\rub}$ and consider the stabilization morphism as $$\mathfrak{st}:\Delta_{\bm x,r}^{\rub}\to \Delta_{n+r}.$$
\end{proposition}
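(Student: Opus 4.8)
The plan is to analyze the stabilization map $\mathfrak{st}$ combinatorial-type by combinatorial-type, and show that passing from a tropical rubber stable map to its stabilized source curve is exactly recorded by a sequence of weighted stellar subdivisions on the fan $\Delta_{n+r}$. First I would recall, via Proposition~\ref{prop-troprubber}, that $\mathfrak{st}$ is a bijection on the underlying sets of the cone complexes; so the content of the present statement is purely about \emph{fan structures}: the cones of $\overline M_r^{\trop}(\bm x)$, indexed by combinatorial types $\Theta = [\Gamma \to \Gamma_T]$, refine the cones $\sigma_{\text{type}}$ of $\Delta_{n+r}$ indexed by stable rational tree types. Concretely, a cone $\sigma_\Theta$ has coordinates given by the $B$ lengths of finite target edges and contracted source edges, while the corresponding cone of $\Delta_{n+r}$ has coordinates the lengths of the (fewer) edges of the \emph{stabilized} source tree. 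The key linear-algebra observation is that each edge length of the stabilized source is a sum, with positive integer coefficients coming from the expansion factors $w(e)$, of lengths of edges of $\Gamma$ and $\Gamma_T$ — this is just the statement that under a harmonic map the metric on the target pulls back, scaled by $w(e)$, to the source, and that stabilization adds up collinear edge lengths.

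Next I would set up the induction. Order the combinatorial types of tropical rubber maps over a fixed stabilized tree type by some measure of complexity — e.g.\ the number of vertices of $\Gamma_T$, or the total number of ``extra'' edges beyond those of the stabilized source — and show that the minimal types give exactly the cones of $\Delta_{n+r}$ (these are the covers where $\Gamma_T$ has a single finite edge and $\Gamma$ has no contracted edges, matching a ray $v_I$), and that incrementing the complexity by one corresponds to subdividing a single existing simplicial cone by introducing exactly one new ray. The new ray is generated by the vector $\sum_i w_i v_i$ where the $v_i$ are the generators of the cone being subdivided and the $w_i$ are determined by the expansion factors along the newly-resolved edge: this is precisely a $\bm w$-weighted stellar subdivision in the sense defined just above the Proposition. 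I would verify that the resulting cones are still simplicial — this follows because introducing one compatible edge-length parameter to a simplicial cone and removing the old interior wall keeps simpliciality, exactly as in the combinatorics of stellar subdivision.

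The step I expect to be the main obstacle is proving that the decomposition is a \emph{well-defined iterated} stellar subdivision — i.e.\ that the cones $\sigma_\Theta$ genuinely glue up to a refinement of $\Delta_{n+r}$ rather than merely a collection of subdivided cones, and that the order of the stellar operations can be chosen consistently across adjacent maximal cones. This requires checking that when two combinatorial types share a common degeneration, the corresponding subdivisions agree on the shared face, which amounts to a compatibility statement for the expansion-factor weights along faces. I would handle this by arguing locally: fix a maximal cone $\sigma$ of $\Delta_{n+r}$ (a trivalent stabilized tree type) and enumerate all tropical rubber maps stabilizing to it; these are governed by the finitely many ways of distributing the ``slack'' between source and target, and I would show this poset of types is isomorphic to the poset of cones in an iterated stellar subdivision of $\sigma$, with the weights matching. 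Gluing over the maximal cones is then automatic since stellar subdivision of a face is inherited by every cone containing it. Finally I would record the notation $\Delta_{\bm x,r}^{\rub}$ for the resulting subdivided fan and restate $\mathfrak{st}$ as the induced refinement morphism $\Delta_{\bm x,r}^{\rub} \to \Delta_{n+r}$, completing the proof.
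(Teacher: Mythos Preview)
Your proposal is broadly correct and follows the same overall strategy as the paper --- analyze combinatorial types, observe that expansion factors supply the integer weights, and exhibit the refinement as a sequence of weighted stellar subdivisions --- but the paper organizes the argument around a cleaner combinatorial fact that you do not explicitly isolate. Namely, the paper observes that since the target is a line graph and the source has genus $0$, a combinatorial type of rubber map is determined by (i) the combinatorial type of the \emph{stabilized} source together with (ii) a total ordering on its trivalent vertices compatible with the partial order forced by the ramification data $\bm x$ (the expansion factors being then uniquely determined). Consequently, over a fixed maximal cone $\sigma$ of $\Delta_{n+r}$ the cones of $\Delta^{\rub}_{\bm x,r}$ are exactly the chambers cut out by the hyperplanes ``image of vertex $v_i$ equals image of vertex $v_j$'', i.e.\ the regions indexed by linear extensions of that partial order; the new rays are obtained by setting all but one such inequality to equality, and are visibly positive integer combinations of the original rays with coefficients dictated by the expansion factors. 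This gives the iterated weighted stellar subdivision in one stroke, and the compatibility across faces that you flag as the ``main obstacle'' is automatic, since the orderings restrict correctly under degeneration. Your inductive scheme on a complexity parameter would also succeed, but it is less direct and leaves the face-compatibility to be checked by hand; the ordering description makes both the subdivision and the gluing transparent. One small inaccuracy: your description of the ``minimal types'' as those with $\Gamma_T$ having a single finite edge only accounts for the rays $v_I$, not all cones of $\Delta_{n+r}$; the correct base case is that a cone of $\Delta_{n+r}$ needs no subdivision precisely when the partial order on its trivalent vertices is already total.
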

Following our standard convention, we leave out the subscript $r$ if there are no additional marked points.

An analogous statement holds for maps to a parametrized target. In order to rigidify the problem, we assume that there is at least one non-relative marking, i.e.\ $r>0$.
As in Proposition \ref{prop-troprubber},~\cite[Proposition 4.7]{GKM07} tells us that the stabilization morphism times the evaluation of the marked end $n+1$ (i.e.\ the first non-relative point) $$\mathfrak{st}\times \mathfrak{ev}_{n+1}:\overline M^{\trop}_r(\PP^1_{\trop},\bm x)\to \overline M^{\trop}_{0,n+r}\times\PP^1_{\trop}$$ is a bijection of the underlying sets. The following proof is easily adapted to this situation. We denote the subdivided fan by $\Delta_{\bm x,r}^{\parm}$ and consider the morphism as $$\mathfrak{st}\times \mathfrak{ev}_{n+1}:\Delta_{\bm x,r}^{\parm}\to \Delta_{n+r}\times\PP^1_{\trop}.$$

\begin{proof}
Recall that a combinatorial type of a rubber map $\Theta = [\Gamma\to \Gamma_T]$ is given by the data of the combinatorial types of the source and target, the underlying morphism of finite graphs, and all expansion factors. It is a consequence of Definition~\ref{def-trop-parametrized} that a morphism of graphs to a line graph is equivalent to the data of an ordering on the vertices of $\Gamma$, induced by the left-to-right ordering of their images in $\Gamma_T$. Furthermore, since $\Gamma$ has genus $0$, the expansion factors are determined by the ramification profile $\bm x$~\cite[Definition 2]{BCM}. In other words, the combinatorial type is determined by a combinatorial type for the source $\Gamma$ together with an ordering on the vertices that is compatible with the ramification data. 

Let $\Theta$ be a combinatorial type, such that the cone $\sigma_\Theta$ has maximal dimension, i.e. with the maximal number of bounded edges in $\Gamma_T$. For simplicity, we first deal with the case where all expansion factors are nonzero. Observe that $\Theta$ can be maximal dimensional if and only if the stabilization of the source curve is trivalent and if the images of the trivalent vertices are distinct in $\Gamma_T$. In this case, the combinatorial type $\Theta$ is determined by the stabilization of the source curve and a total ordering on the trivalent vertices. Passing to the stabilization, we lose the data of the total ordering on these vertices. 

Conversely, given a maximal dimensional cone $\sigma$ in $M^{\trop}_{0,n+r}$, we subdivide $\sigma$ into cones corresponding to all possible total orderings of the trivalent vertices that respect the ramification profile. The rays introduced in this subdivision are determined by taking a possible total ordering and changing all but one of the inequalities to equalities. If such a ray is a face of the fan $\overline M^{\trop}_{0,n+r}$ then it is a positive integer combination of such rays, where the coefficients are imposed by the ramification data. We introduce these new rays and perform (weighted) stellar subdivision to obtain the collection of cones of $M^{\trop}_r(\bm x)$. 

The case where a combinatorial type $\Theta$ of tropical relative stable maps has zero expansion factors is similar. First, note that given $\Theta$, we obtain a ray $v_I$ in $M^{\trop}_{0,n+r}$, where $I\cup I^c = \{1,\ldots, n\}$ determines a partition of the markings obtained by cutting this contracted internal edge. This ray is a $1$-dimensional face of any maximal cone of $M_r^{\trop}(\bm x)$ refining the cone of $M^{\trop}_{0,n+r}$ obtained after stabilization of $\Theta$. If an edge $e$ has expansion factor $0$, then its endpoints are mapped to the same vertex of $\Gamma_T$. Consequently, we can repeat the argument above for the remaining rays and inequalities imposed by the images of the remaining vertices.
\end{proof}

\begin{example}
 Let $n=6$, $\bm x=(-4,-4,5,1,1,1)$ and $r=0$. Consider the maximal cone of $\overline M^{\trop}_{0,6}$ spanned by the rays $v_{\{1,4\}}$, $v_{\{1,3,4\}}$ and $v_{\{5,6\}}$. The corresponding combinatorial type $\alpha$ of $6$-marked tropical curves is shown in Figure~\ref{oi}. On the right, we see a combinatorial type $\alpha'$ of a tropical relative stable map with ramification data $\bm x$ whose stabilization is $\alpha$. 
 
 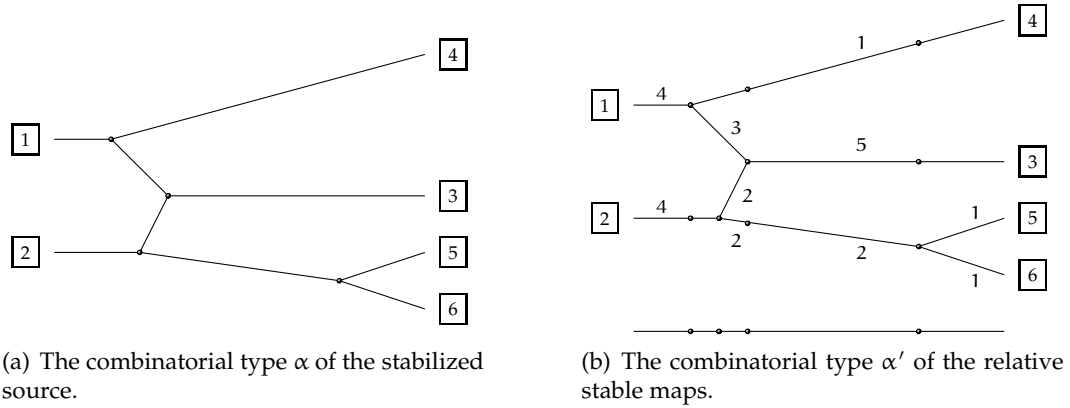
\begin{figure}[h!]
\hfill
\subfigure[The combinatorial type $\alpha$ of the stabilized source.]{\begin{tikzpicture}[scale=1.5]
\coordinate (0) at (-2,1);
\coordinate (1) at (-2,2);
\coordinate (2) at (-1.5,2);
\coordinate (3) at (-1.25,1);
\coordinate (4) at (-1,1.5);
\coordinate (5) at (1.25,2.75);
\coordinate (6) at (1.25,1.5);
\coordinate (7) at (0.5, 0.75);
\coordinate (8) at (1.25,1);
\coordinate (9) at (1.25,0.5);

\coordinate (A) at (-2,0);
\coordinate (B) at (-1.5,0);
\coordinate (C) at (-1.25,0);
\coordinate (D) at (-1,0);
\coordinate (E) at (.5,0);
\coordinate (F) at (1.25,0);

\draw (1)--(2)--(4)--(3)--(0);
\draw (2)--(5);
\draw (4)--(6);
\draw (3)--(7);
\draw (9)--(7)--(8);

\draw[ball color=black] (2) circle (0.2mm);
\draw[ball color=black] (3) circle (0.2mm);
\draw[ball color=black] (4) circle (0.2mm);
\draw[ball color=black] (7) circle (0.2mm);

\node at (-2.25,1) {\tiny \boxed{$2$}};
\node at (-2.25,2) {\tiny \boxed{$1$}};
\node at (1.5,2.75) {\tiny \boxed{$4$}};
\node at (1.5,1.5) {\tiny \boxed{$3$}};
\node at (1.5,1) {\tiny \boxed{$5$}};
\node at (1.5,0.5) {\tiny \boxed{$6$}};
\end{tikzpicture}
}
\hfill
\subfigure[The combinatorial type $\alpha'$ of the relative stable maps.]{\begin{tikzpicture}
[scale=1.5]
\coordinate (0) at (-2,1);
\coordinate (1) at (-2,2);
\coordinate (2) at (-1.5,2);
\coordinate (3) at (-1.25,1);
\coordinate (4) at (-1,1.5);
\coordinate (5) at (1.25,2.75);
\coordinate (6) at (1.25,1.5);
\coordinate (7) at (0.5, 0.75);
\coordinate (8) at (1.25,1);
\coordinate (9) at (1.25,0.5);

\coordinate (A) at (-2,0);
\coordinate (B) at (-1.5,0);
\coordinate (C) at (-1.25,0);
\coordinate (D) at (-1,0);
\coordinate (E) at (.5,0);
\coordinate (F) at (1.25,0);

\draw (1)--(2)--(4)--(3)--(0);
\draw (2)--(5);
\draw (4)--(6);
\draw (3)--(7);
\draw (9)--(7)--(8);
\draw (A)--(B)--(C)--(D)--(E)--(F);

\draw[ball color=black] (2) circle (0.2mm);
\draw[ball color=black] (3) circle (0.2mm);
\draw[ball color=black] (4) circle (0.2mm);
\draw[ball color=black] (7) circle (0.2mm);
\draw[ball color=black] (-1.5,1) circle (0.2mm);
\draw[ball color=black] (-1,2.14) circle (0.2mm);
\draw[ball color=black] (-1,0.955) circle (0.2mm);
\draw[ball color=black] (0.5,1.5) circle (0.2mm);
\draw[ball color=black] (0.5,2.55) circle (0.2mm);

\draw[ball color=black] (B) circle (0.2mm);
\draw[ball color=black] (C) circle (0.2mm);
\draw[ball color=black] (D) circle (0.2mm);
\draw[ball color=black] (E) circle (0.2mm);

\node at (-2.25,1) {\tiny \boxed{$2$}};
\node at (-2.25,2) {\tiny \boxed{$1$}};
\node at (1.5,2.75) {\tiny \boxed{$4$}};
\node at (1.5,1.5) {\tiny \boxed{$3$}};
\node at (1.5,1) {\tiny \boxed{$5$}};
\node at (1.5,0.5) {\tiny \boxed{$6$}};

\node at (-1.75,1.1) {\tiny $4$};
\node at (-1.75,2.1) {\tiny $4$};
\node at (-1,1.2) {\tiny $2$};
\node at (-1.1,0.8) {\tiny $2$};
\node at (-1.1,1.8) {\tiny $3$};
\node at (0,2.55) {\tiny $1$};
\node at (0,1.65) {\tiny $5$};
\node at (0,.7) {\tiny $2$};
\node at (1,.45) {\tiny $1$};
\node at (1,1.05) {\tiny $1$};
\end{tikzpicture}

}
\hfill
\caption{Combinatorial types of maximal cells in  $M^{\trop}_{0,6}$ and $M^{\trop}(\bm x)$ for $\bm x = (-4,-4,5,1,1,1)$.}\label{oi}
\end{figure}

The type $\alpha'$ is determined by $\alpha$, together with an ordering of the $3$-valent vertices. We can name the $3$-valent vertices by the marked end of $\{2,3,4,5\}$ adjacent to them in the stabilization. Using this notation, the ordering of $\alpha'$ equals $4<2<3<5$. We denote this ordering by $O_1$.

The ramification data imposes the relations $4\leq 3$, $2\leq 3$ and $2\leq 5$ for the combinatorial type $\alpha$. Thus, the following orderings are also allowed: $$O_2: 2<4<3<5, \ \ \ \ \ \ O_3: 4<2<5<3, \ \ \ \ \ \ O_4: 2<4<5<3, \ \ \ \ \ \ O_5: 2<5<4<3.$$ Figure~\ref{fig: m06-subdivision} shows the combinatorics of the subdivided cone of $\overline M^{\trop}_{0,6}$.

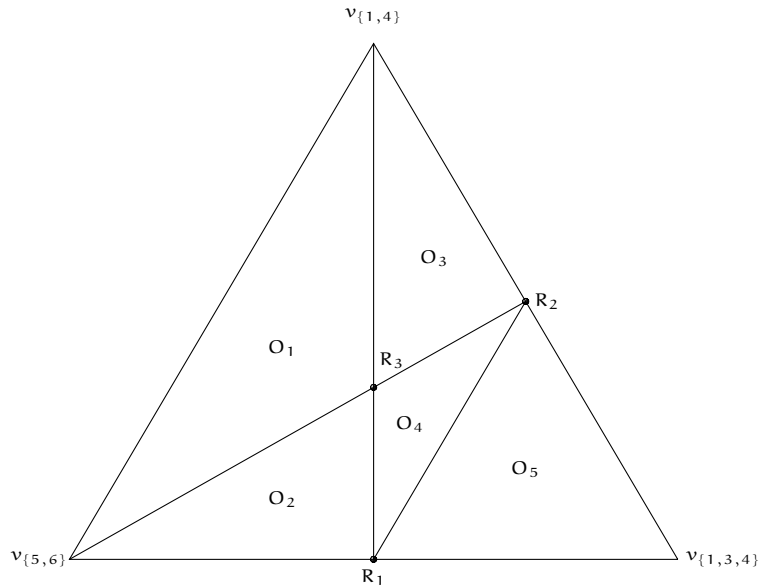
\begin{figure}[h!]
\begin{tikzpicture}[scale=0.8]
\coordinate (1) at (4,0);
\coordinate (2) at (-6,0);
\coordinate (3) at (-1, 8.55);
\coordinate (R1) at (-1,0);
\coordinate (R2) at (1.5,4.275);
\coordinate (R3) at (-1,2.85);

\draw[ball color=black] (R1) circle (0.5mm);
\draw[ball color=black] (R2) circle (0.5mm);
\draw[ball color=black] (R3) circle (0.5mm);
\draw (1)--(2)--(3)--(1);
\draw (2)--(R2)--(R1)--(3);

\node at (4.75,0) {\tiny$v_{\{1,3,4\}}$};
\node at (-6.5,0) {\tiny$v_{\{5,6\}}$};
\node at (-1,9) {\tiny$v_{\{1,4\}}$};

\node at (0,5) {\tiny$O_3$};
\node at (-2.5,3.5) {\tiny$O_1$};
\node at (-2.5,1) {\tiny$O_2$};
\node at (-.4,2.25) {\tiny$O_4$};
\node at (1.5,1.5) {\tiny $O_5$};

\node at (-1,-0.3) {\tiny $R_1$};
\node at (1.85,4.275) {\tiny $R_2$}; 
\node at (-0.7,3.3) {\tiny $R_3$};

\end{tikzpicture}
\caption{The subdivision induced by the stabilization morphism.}
\label{fig: m06-subdivision}
\end{figure}

To illustrate this process, we compute the additional rays prescribed by the ordering $O_2$. Figure~\ref{fig: o2-types} shows the combinatorial type corresponding to this order, plus the 3 combinatorial types of maps where we let two of the inequalities become equalities and leave only one strict inequality. The stabilization of the map given by $2=4=3<5$ is the ray $v_{\{5,6\}}$. The stabilization of the map given by $2<4=3=5$ is 
\[
R_1=v_{\{1,3,4\}}+v_{\{5,6\}}.
\] 
Similarly, the ray for the map given by $2=4<3=5$ is \[
R_2=2\cdot v_{\{1,4\}}+3\cdot v_{\{1,3,4\}}+3\cdot v_{\{5,6\}}. 
\]
The linear coefficients here are determined by the expansion factors of the interior edges.
In addition to those two rays, we need to also add  $R_3=2\cdot v_{\{1,4\}}+3\cdot v_{\{1,3,4\}}$. This can  be seen by computing the boundary of the open cone determined by the ordering $O_3$. We could for example first add $R_1$, then $R_3  $ and finally $R_2$.

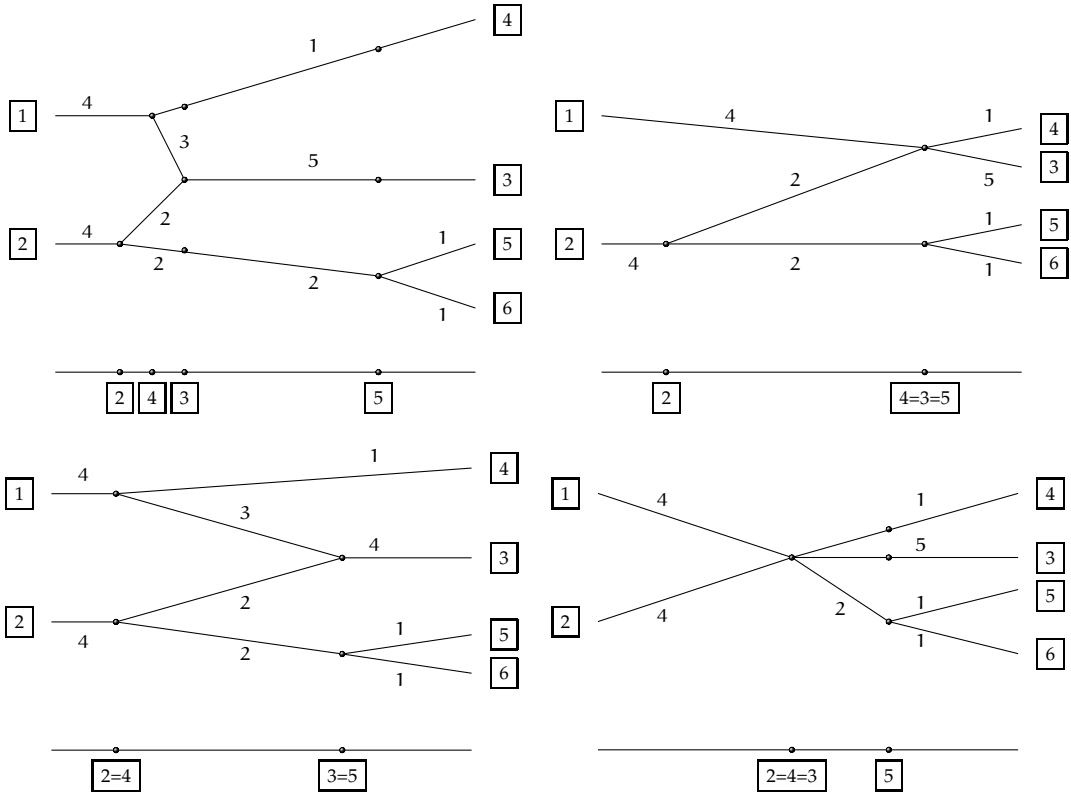
\begin{figure}[h!]
\subfigure{
\begin{tikzpicture}
[scale=1.7]
\coordinate (0) at (-2,1);
\coordinate (1) at (-2,2);
\coordinate (2) at (-1.25,2);
\coordinate (3) at (-1.5,1);
\coordinate (4) at (-1,1.5);
\coordinate (5) at (1.25,2.75);
\coordinate (6) at (1.25,1.5);
\coordinate (7) at (0.5, 0.75);
\coordinate (8) at (1.25,1);
\coordinate (9) at (1.25,0.5);

\coordinate (A) at (-2,0);
\coordinate (B) at (-1.5,0);
\coordinate (C) at (-1.25,0);
\coordinate (D) at (-1,0);
\coordinate (E) at (.5,0);
\coordinate (F) at (1.25,0);

\node at (-1.5,-0.2) {\tiny \boxed{$2$}};
\node at (-1.25,-0.2) {\tiny \boxed{$4$}};
\node at (-1,-0.2) {\tiny \boxed{$3$}};
\node at (.5,-0.2) {\tiny \boxed{$5$}};

\draw (1)--(2)--(4)--(3)--(0);
\draw (2)--(5);
\draw (4)--(6);
\draw (3)--(7);
\draw (9)--(7)--(8);
\draw (A)--(B)--(C)--(D)--(E)--(F);

\draw[ball color=black] (2) circle (0.2mm);
\draw[ball color=black] (3) circle (0.2mm);
\draw[ball color=black] (4) circle (0.2mm);
\draw[ball color=black] (7) circle (0.2mm);
\draw[ball color=black] (-1.5,1) circle (0.2mm);
\draw[ball color=black] (-1,2.07) circle (0.2mm);
\draw[ball color=black] (-1,0.95) circle (0.2mm);
\draw[ball color=black] (0.5,1.5) circle (0.2mm);
\draw[ball color=black] (0.5,2.52) circle (0.2mm);

\draw[ball color=black] (B) circle (0.2mm);
\draw[ball color=black] (C) circle (0.2mm);
\draw[ball color=black] (D) circle (0.2mm);
\draw[ball color=black] (E) circle (0.2mm);

\node at (-1.75,1.1) {\tiny $4$};
\node at (-1.75,2.1) {\tiny $4$};
\node at (-1.15,1.2) {\tiny $2$};
\node at (-1.2,0.85) {\tiny $2$};
\node at (-1,1.8) {\tiny $3$};
\node at (0,2.55) {\tiny $1$};
\node at (0,1.65) {\tiny $5$};
\node at (0,.7) {\tiny $2$};
\node at (1,.45) {\tiny $1$};
\node at (1,1.05) {\tiny $1$};

\node at (-2.25,1) {\tiny \boxed{$2$}};
\node at (-2.25,2) {\tiny \boxed{$1$}};
\node at (1.5,2.75) {\tiny \boxed{$4$}};
\node at (1.5,1.5) {\tiny \boxed{$3$}};
\node at (1.5,1) {\tiny \boxed{$5$}};
\node at (1.5,0.5) {\tiny \boxed{$6$}};
\end{tikzpicture}
}
\subfigure{\begin{tikzpicture}
[scale=1.7]
\coordinate (0) at (-2,1);
\coordinate (1) at (-2,2);
\coordinate (2) at (-1.5,1);
\coordinate (3) at (0.5,1.75);
\coordinate (4) at (0.5,1);
\coordinate (5) at (1.25,1.9);
\coordinate (6) at (1.25,1.6);
\coordinate (7) at (1.25,1.15);
\coordinate (8) at (1.25,0.85);

\coordinate (A) at (-2,0);
\coordinate (B) at (-1.5,0);
\coordinate (C) at (-1.25,0);
\coordinate (D) at (-1,0);
\coordinate (E) at (.5,0);
\coordinate (F) at (1.25,0);

\node at (-1.5,-0.2) {\tiny\boxed{$2$}};
\node at (.5,-0.2) {\tiny\boxed{$4=3=5$}};

\draw (0)--(2)--(3)--(1);
\draw (2)--(4);
\draw (5)--(3)--(6);
\draw (8)--(4)--(7);
\draw (A)--(B)--(C)--(D)--(E)--(F);

\draw[ball color=black] (2) circle (0.2mm);
\draw[ball color=black] (3) circle (0.2mm);
\draw[ball color=black] (4) circle (0.2mm);

\draw[ball color=black] (B) circle (0.2mm);
\draw[ball color=black] (E) circle (0.2mm);

\node at (-2.25,1) {\tiny \boxed{$2$}};
\node at (-2.25,2) {\tiny \boxed{$1$}};
\node at (1.5,1.9) {\tiny \boxed{$4$}};
\node at (1.5,1.6) {\tiny \boxed{$3$}};
\node at (1.5,1.15) {\tiny \boxed{$5$}};
\node at (1.5,0.85) {\tiny \boxed{$6$}};

\node at (-1,2) {\tiny $4$};
\node at (-1.75,0.85) {\tiny $4$};
\node at (-0.5,1.5) {\tiny $2$};
\node at (-0.5, 0.85) {\tiny $2$};
\node at (1, 2) {\tiny $1$};
\node at (1, 1.5) {\tiny $5$};
\node at (1,1.2) {\tiny $1$};
\node at (1,0.8) {\tiny $1$};
\end{tikzpicture}
}
\subfigure{\begin{tikzpicture}
[scale=1.7]
\coordinate (0) at (-2,1);
\coordinate (1) at (-2,2);
\coordinate (2) at (-1.5,1);
\coordinate (3) at (-1.5,2);
\coordinate (4) at (0.25,1.5);
\coordinate (5) at (0.25,0.75);
\coordinate (6) at (1.25,2.2);
\coordinate (7) at (1.25,1.5);
\coordinate (8) at (1.25,0.9);
\coordinate (9) at (1.25,0.6);

\coordinate (A) at (-2,0);
\coordinate (B) at (-1.5,0);
\coordinate (C) at (-1.25,0);
\coordinate (D) at (-1,0);
\coordinate (E) at (.25,0);
\coordinate (F) at (1.25,0);

\draw (1)--(3)--(4)--(2)--(0);
\draw (2)--(5);
\draw (3)--(6);
\draw (4)--(7);
\draw (8)--(5)--(9);

\draw (A)--(B)--(C)--(D)--(E)--(F);

\draw[ball color=black] (2) circle (0.2mm);
\draw[ball color=black] (3) circle (0.2mm);
\draw[ball color=black] (4) circle (0.2mm);
\draw[ball color=black] (5) circle (0.2mm);

\draw[ball color=black] (B) circle (0.2mm);
\draw[ball color=black] (E) circle (0.2mm);

\node at (-2.25,1) {\tiny \boxed{$2$}};
\node at (-2.25,2) {\tiny \boxed{$1$}};
\node at (1.5,2.2) {\tiny \boxed{$4$}};
\node at (1.5,1.5) {\tiny \boxed{$3$}};
\node at (1.5,.9) {\tiny \boxed{$5$}};
\node at (1.5,0.6) {\tiny \boxed{$6$}};
\node at (-1.5,-0.2) {\tiny \boxed{$2=4$}};
\node at (.25,-0.2) {\tiny \boxed{$3=5$}};

\node at (-1.75,0.85) {\tiny $4$};
\node at (-1.75,2.15) {\tiny $4$};
\node at (.5,2.3) {\tiny $1$};
\node at (.5,1.6) {\tiny $4$};
\node at (.7,.95) {\tiny $1$};
\node at (.7,.55) {\tiny $1$};
\node at (-0.5,0.75) {\tiny $2$};
\node at (-0.5,1.15) {\tiny $2$};
\node at (-0.5,1.85) {\tiny $3$};

\end{tikzpicture}
}
\subfigure{\begin{tikzpicture}
[scale=1.7]
\coordinate (0) at (-2,1);
\coordinate (1) at (-2,2);
\coordinate (2) at (-0.5,1.5);
\coordinate (3) at (0.25,1);
\coordinate (4) at (0.25,1.5);
\coordinate (5) at (0.25,1.72);
\coordinate (6) at (1.25,1.25);
\coordinate (7) at (1.25,0.75);
\coordinate (8) at (1.25, 1.5);
\coordinate (9) at (1.25,2);

\coordinate (A) at (-2,0);
\coordinate (B) at (-1.5,0);
\coordinate (C) at (-1.25,0);
\coordinate (D) at (-0.5,0);
\coordinate (E) at (0.25,0);
\coordinate (F) at (1.25,0);

\draw (1)--(2);
\draw (4)--(2)--(3);
\draw (0)--(2);
\draw (4)--(8);
\draw (7)--(3)--(6);

\draw (A)--(B)--(C)--(D)--(E)--(F);

\draw[ball color=black] (2) circle (0.2mm);
\draw[ball color=black] (3) circle (0.2mm);
\draw[ball color=black] (4) circle (0.2mm);
\draw[ball color=black] (5) circle (0.2mm);

\draw (2)--(9);

\draw[ball color=black] (D) circle (0.2mm);
\draw[ball color=black] (E) circle (0.2mm);

\node at (-2.25,1) {\tiny \boxed{$2$}};
\node at (-2.25,2) {\tiny \boxed{$1$}};
\node at (1.5,2) {\tiny \boxed{$4$}};
\node at (1.5,1.5) {\tiny \boxed{$3$}};
\node at (1.5,1.2) {\tiny \boxed{$5$}};
\node at (1.5,0.75) {\tiny \boxed{$6$}};

\node at (-1.5,1.95) {\tiny $4$};
\node at (-1.5,1.05) {\tiny $4$};
\node at (-0.125,1.1) {\tiny $2$};
\node at (.5,1.6) {\tiny $5$};
\node at (.5,1.95) {\tiny $1$};
\node at (.5,1.15) {\tiny $1$};
\node at (.5,0.85) {\tiny $1$};

\node at (-0.5,-0.2) {\tiny \boxed{$2=4=3$}};
\node at (0.25,-0.2) {\tiny \boxed{$5$}};
\end{tikzpicture}
}
\caption{The combinatorial type $O_2$ and the combinatorial types of its three $1$-dimensional faces. Boxed numbers on the target graph indicate a labeling of the essential vertices on the source. These vertices are partially ordered by their image in the target.}
\label{fig: o2-types}
\end{figure}

Note that the neighbouring cones are also subdivided. Figure~\ref{fig: neighboring-cone} shows exemplarily the cone of  $\overline M^{\trop}_{0,6}$ spanned by the rays $v_{\{3,4\}}$, $v_{\{1,3,4\}}$ and $v_{\{5,6\}}$ and its subdivision, indicating the combinatorial types of maps corresponding to the cones in the subdivision.

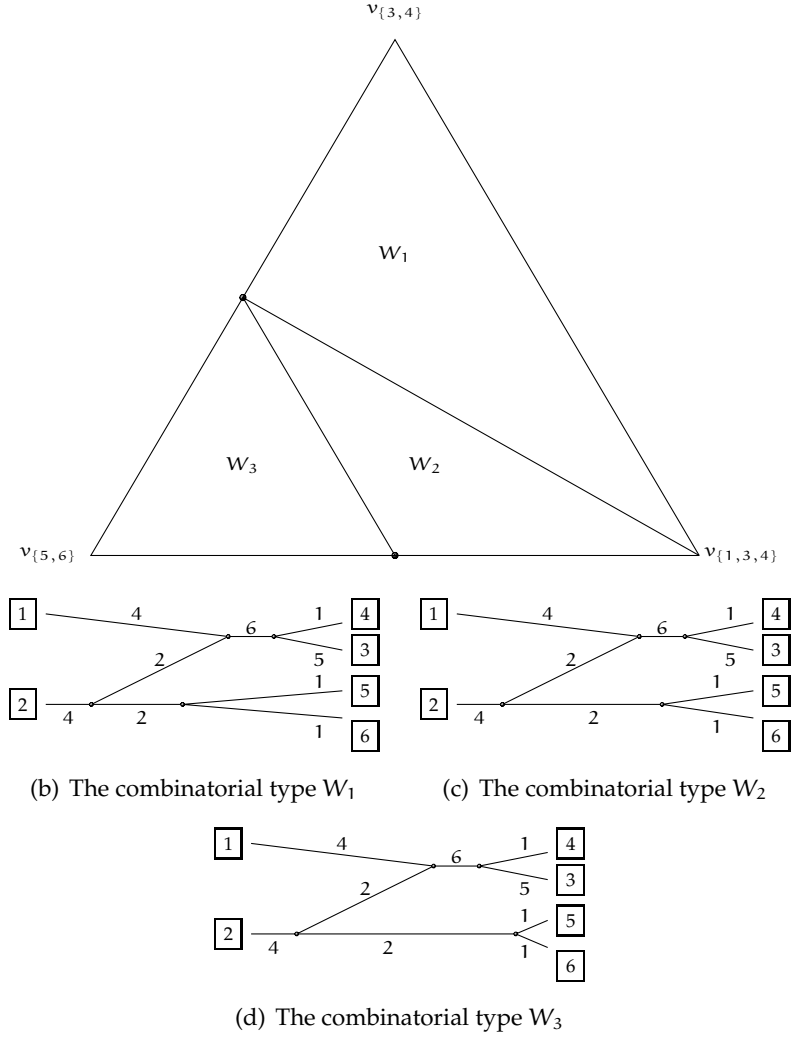
\begin{figure}[h!]
\subfigure{\begin{tikzpicture}[scale=0.8]
\coordinate (1) at (4,0);
\coordinate (2) at (-6,0);
\coordinate (3) at (-1, 8.55);
\coordinate (R1) at (-1,0);
\coordinate (R3) at (-3.5,4.275);

\draw[ball color=black] (R1) circle (0.5mm);
\draw[ball color=black] (R3) circle (0.5mm);
\draw (1)--(2)--(3)--(1);
\draw (R1)--(R3)--(1);

\node at (4.7,0) {\tiny $v_{\{1,3,4\}}$};
\node at (-6.7,0) {\tiny $v_{\{5,6\}}$};
\node at (-1,9) {\tiny $v_{\{3,4\}}$};

\node at (-1,5) {\tiny $W_1$};
\node at (-0.5,1.5) {\tiny $W_2$};
\node at (-3.5,1.5) {\tiny $W_3$};

\end{tikzpicture}}
\subfigure[The combinatorial type $W_1$]{
\begin{tikzpicture}
[scale=1.2]
\coordinate (0) at (-2,1);
\coordinate (1) at (-2,2);
\coordinate (2) at (-1.5,1);
\coordinate (3) at (0.5,1.75);
\coordinate (3') at (0,1.75);
\coordinate (4) at (-0.5,1);
\coordinate (5) at (1.25,1.9);
\coordinate (6) at (1.25,1.6);
\coordinate (7) at (1.25,1.15);
\coordinate (8) at (1.25,0.85);

\draw (0)--(2)--(3')--(1);
\draw (3)--(3');
\draw (2)--(4);
\draw (5)--(3)--(6);
\draw (8)--(4)--(7);

\draw[ball color=black] (2) circle (0.2mm);
\draw[ball color=black] (3) circle (0.2mm);
\draw[ball color=black] (3') circle (0.2mm);
\draw[ball color=black] (4) circle (0.2mm);

\node at (-2.25,1) {\tiny \boxed{$2$}};
\node at (-2.25,2) {\tiny \boxed{$1$}};
\node at (1.5,2) {\tiny \boxed{$4$}};
\node at (1.5,1.6) {\tiny \boxed{$3$}};
\node at (1.5,1.15) {\tiny \boxed{$5$}};
\node at (1.5,0.65) {\tiny \boxed{$6$}};

\node at (-1,2) {\tiny $4$};
\node at (-1.75,0.85) {\tiny $4$};
\node at (-0.75,1.5) {\tiny $2$};
\node at (-0.95, 0.85) {\tiny $2$};
\node at (1, 2) {\tiny $1$};
\node at (1, 1.5) {\tiny $5$};
\node at (1,1.25) {\tiny $1$};
\node at (1,0.7) {\tiny $1$};
\node at (0.25,1.85) {\tiny $6$};
\end{tikzpicture}
}
\subfigure[The combinatorial type $W_2$]{
\begin{tikzpicture}
[scale=1.2]
\coordinate (0) at (-2,1);
\coordinate (1) at (-2,2);
\coordinate (2) at (-1.5,1);
\coordinate (3) at (0.5,1.75);
\coordinate (3') at (0,1.75);
\coordinate (4) at (0.25,1);
\coordinate (5) at (1.25,1.9);
\coordinate (6) at (1.25,1.6);
\coordinate (7) at (1.25,1.15);
\coordinate (8) at (1.25,0.85);

\draw (0)--(2)--(3')--(1);
\draw (3)--(3');
\draw (2)--(4);
\draw (5)--(3)--(6);
\draw (8)--(4)--(7);

\draw[ball color=black] (2) circle (0.2mm);
\draw[ball color=black] (3) circle (0.2mm);
\draw[ball color=black] (3') circle (0.2mm);
\draw[ball color=black] (4) circle (0.2mm);

\node at (-2.25,1) {\tiny \boxed{$2$}};
\node at (-2.25,2) {\tiny \boxed{$1$}};
\node at (1.5,2) {\tiny \boxed{$4$}};
\node at (1.5,1.6) {\tiny \boxed{$3$}};
\node at (1.5,1.15) {\tiny \boxed{$5$}};
\node at (1.5,0.65) {\tiny \boxed{$6$}};

\node at (-1,2) {\tiny $4$};
\node at (-1.75,0.85) {\tiny $4$};
\node at (-0.75,1.5) {\tiny $2$};
\node at (-0.5, 0.85) {\tiny $2$};
\node at (1, 2) {\tiny $1$};
\node at (1, 1.5) {\tiny $5$};
\node at (0.85,1.25) {\tiny $1$};
\node at (0.85,0.75) {\tiny $1$};
\node at (0.25,1.85) {\tiny $6$};
\end{tikzpicture}
}
\subfigure[The combinatorial type $W_3$]{
\begin{tikzpicture}
[scale=1.2]
\coordinate (0) at (-2,1);
\coordinate (1) at (-2,2);
\coordinate (2) at (-1.5,1);
\coordinate (3) at (0.5,1.75);
\coordinate (3') at (0,1.75);
\coordinate (4) at (0.9,1);
\coordinate (5) at (1.25,1.9);
\coordinate (6) at (1.25,1.6);
\coordinate (7) at (1.25,1.15);
\coordinate (8) at (1.25,0.85);

\draw (0)--(2)--(3')--(1);
\draw (3)--(3');
\draw (2)--(4);
\draw (5)--(3)--(6);
\draw (8)--(4)--(7);

\draw[ball color=black] (2) circle (0.2mm);
\draw[ball color=black] (3) circle (0.2mm);
\draw[ball color=black] (3') circle (0.2mm);
\draw[ball color=black] (4) circle (0.2mm);

\node at (-2.25,1) {\tiny \boxed{$2$}};
\node at (-2.25,2) {\tiny \boxed{$1$}};
\node at (1.5,2) {\tiny \boxed{$4$}};
\node at (1.5,1.6) {\tiny \boxed{$3$}};
\node at (1.5,1.15) {\tiny \boxed{$5$}};
\node at (1.5,0.65) {\tiny \boxed{$6$}};

\node at (-1,2) {\tiny $4$};
\node at (-1.75,0.85) {\tiny $4$};
\node at (-0.75,1.5) {\tiny $2$};
\node at (-0.5, 0.85) {\tiny $2$};
\node at (1, 2) {\tiny $1$};
\node at (1, 1.5) {\tiny $5$};
\node at (1,1.2) {\tiny $1$};
\node at (1,0.8) {\tiny $1$};
\node at (0.25,1.85) {\tiny $6$};
\end{tikzpicture}
}
\caption{The cone spanned by $v_{\{3,4\}}$, $v_{\{1,3,4\}}$ and $v_{\{5,6\}}$ and its subdivision induced by stabilization.}
\label{fig: neighboring-cone}
\end{figure}

\end{example}


\subsection{Tropicalizations of moduli spaces}

In this section, we construct a tropicalization map from the analytification of the space of relative stable maps to the (combinatorial) tropical space. This will be a crucial step in our proof of tropical compactification. 

\subsubsection{Functorial tropicalization for $\overline M_{0,n}$} We recall first the tropicalization map
\[
trop: \overline M_{0,n}^{\an}\to \overline M_{0,n}^{\trop}.
\]
This map has been introduced and studied by many authors, but we follow the presentation and conventions of~\cite{ACP}.

A point of the Berkovich analytic space $\overline M_{0,n}^{\an}$ may be represented by a map 
\[
Spec(K)\to \overline M_{0,n},
\]
where $K$ is a valued field extending the ground field with valuation ring $R$, residue field $\kappa$, maximal ideal $\mathfrak m$, and valuation $val(-)$. Notice that $\overline M_{0,n}$ is a proper variety, and thus we obtain a map
\[
Spec(R)\to \overline M_{0,n}.
\]
By pulling back the universal family, we obtain a family of curves $[C\to Spec(R)]$. Let $\Gamma(C)$ be the marked dual graph of the special fiber of this family. In order to obtain a tropical curve, we must metrize the bounded edges of this graph. Let $e$ be an edge of $\Gamma(C)$ corresponding to a node $q\in C_{\kappa}$. The local defining equation of $q$ in the total family is given by
\[
xy = f,
\]
where $f\in \mathfrak m$. We set the length $\ell(e)$ of the edge $e$ to be $val(f)$. We thus obtain a tropical curve $\Gamma(C)$.

The following is a consequence of~{\cite[Theorem 1.2.1]{ACP}}.

\begin{theorem}\label{thm-functorialtropm0n}
The tropicalization map $trop: \overline M_{0,n}^{\an}\to \overline M_{0,n}^{\trop}$ is continuous, and is naturally identified with the projection from $\overline M_{0,n}^{\an}$ to the Thuillier skeleton associated to the toroidal embedding $M_{0,n}\hookrightarrow \overline M_{0,n}$. 
\end{theorem}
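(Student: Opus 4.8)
The plan is to deduce the statement directly from the theory of skeletons of toroidal embeddings, specializing the results of Abramovich--Caporaso--Payne~\cite{ACP} from $\overline M_{g,n}$ to the genus $0$ case, where the moduli space is moreover an honest smooth variety. First I would record that the boundary $\overline M_{0,n}\setminus M_{0,n}$ is a normal crossings divisor, so that $M_{0,n}\hookrightarrow \overline M_{0,n}$ is a toroidal embedding without self-intersection. By Thuillier's construction~\cite{Thu07}, recalled in Section~\ref{sec-prelim}, this toroidal structure produces an extended cone complex $\overline\Sigma(\overline M_{0,n})$ together with a continuous projection $\bp\colon \overline M_{0,n}^{\an}\to \overline\Sigma(\overline M_{0,n})$, and a canonical section realizing $\overline\Sigma(\overline M_{0,n})$ as a closed subspace of the analytification.

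The second step is to identify $\overline\Sigma(\overline M_{0,n})$ with $\overline M_{0,n}^{\trop}$ as extended cone complexes. The cones of $\overline\Sigma(\overline M_{0,n})$ are indexed by the strata of the toroidal boundary, hence by the boundary strata of $\overline M_{0,n}$; such a stratum is the locus of curves with a fixed dual graph $\Gamma$, a stable tree with $n$ leaves, and the associated cone is $\RR_{\geq 0}^{E(\Gamma)}$, with one coordinate per node recording the local smoothing parameter and face maps given by edge contractions. This is precisely the cone complex of combinatorial types of genus $0$ tropical curves recalled above, and passing to canonical compactifications --- allowing a coordinate to take the value $\infty$ --- matches the convention by which $\overline M_{0,n}^{\trop}$ is obtained from $M_{0,n}^{\trop}$ by letting bounded edge lengths tend to $\infty$. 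This identification of cone complexes is part of the content of~\cite[Theorem 1.2.1]{ACP}.

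Finally I would check that, under this identification, the retraction $\bp$ coincides with the map $trop$ as described in the text. A point of $\overline M_{0,n}^{\an}$ is represented by a map $\spec(R)\to \overline M_{0,n}$ with $R$ the valuation ring of a valued extension $K/\CC$; pulling back the universal curve yields a family $[C\to \spec(R)]$, whose special fiber has dual graph $\Gamma(C)$, and for a node $q$ with local equation $xy=f$ the prescription assigns to the corresponding edge the length $\val(f)$. The assertion borrowed from~\cite{ACP} is exactly that this assignment agrees with $\bp$: the smoothing parameter $f$ is, up to a unit, the pullback of the monomial coordinate cutting out transversally the branch of the boundary along which $q$ is smoothed, so $\val(f)$ is the coordinate of $\bp$ along the corresponding ray of $\overline\Sigma(\overline M_{0,n})$. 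Continuity of $trop$ is then inherited from continuity of the projection $\bp$.

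The one technical input that must genuinely be imported from~\cite{ACP}, rather than reproved here, is this last matching: that the naive ``valuation of the node-smoothing parameter'' recipe, which a priori depends on a choice of local coordinates $x,y,f$ at each node, is in fact independent of those choices and agrees canonically --- and uniformly over all of $\overline M_{0,n}^{\an}$, including over points lying in the boundary, where one must allow families whose generic fiber is already nodal and work with the extended cone complex --- with the abstract Thuillier retraction. Everything else, namely indexing the cones by tree types, verifying the edge-contraction face maps, and fixing the edge-length normalization, is routine bookkeeping.
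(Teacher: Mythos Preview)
Your proposal is correct and matches the paper's approach exactly: the paper does not give an independent proof but simply states that the theorem is a consequence of~\cite[Theorem 1.2.1]{ACP}, and your write-up is precisely an unpacking of that citation --- identifying the toroidal boundary strata with combinatorial types, matching the node-smoothing valuations with the Thuillier retraction, and inheriting continuity. There is nothing to add beyond noting that the paper itself omits the details you have supplied.
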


\subsubsection{Functorial tropicalization for rubber relative stable maps} We analogously define a tropicalization map as above on the space of relative stable maps,
\[
trop:\overline M^{\an}(\bm x)\to \overline M^{\trop}(\bm x). 
\]
Let $K$ be a valued field extending the base field, with valuation ring $R$, residue field $\kappa$, maximal ideal $\mathfrak m$, and valuation $val(-)$. A point of the analytic moduli space can be represented by a $K$-valued point
\[
Spec(K)\to \overline M(\bm x).
\]
By properness of the stack $\Mbar(\bm x)$, after a finite base extension $K'|K$, this extends uniquely to a point over $Spec(R')$, where $R'$ is the valuation ring of $K'$. Note that the valuation extends uniquely to any finite field extension of $K$. By pulling back the universal family and the universal map, we obtain a family of maps $[C\to T]$ over $Spec(R')$. Metrizing the dual graphs of the source and target, we obtain two metric graphs (with marked ends) $\Gamma(C)$ and $\Gamma(T)$.

Since $[\pi: C\to T]$ is a relative stable map we make the following observations. 
\begin{description}
\item[Vertices] Every irreducible component of $C$ maps to a unique irreducible component of $T$, and we obtain a map of vertices.
\item[Edges] If the image of a node is not a node, then the corresponding edge is contracted to a vertex. Else, if two components $C_1$ and $C_2$ of $C$ share a node, then their images share a node as well, and thus every edge of the source maps to an edge of the target.
\item[Expansion factors] Suppose that $\widetilde q$ is a node of $C$ mapping to $q\in T$. By the pre-deformability condition, if we lift $\pi$ to a map between the normalizations $\widetilde C$ and $\widetilde T$, the local degree on each shadow of the node $\widetilde q$ maps to a shadow of the node $q$ with some degree $d$. That is, the degrees on either shadow must be equal for $\pi$ to be deformable. Thus, we see that if the node $q$ has local equation $xy = f$, then the node $\widetilde q$ has local equation 
\[
uv = f^d,
\]
and thus, each edge has a well defined expansion factor $d$. Note that if $q$ is a smooth point, we may take $d = 0$.
\end{description}

We conclude that there is an induced map $[\Gamma(C)\to \Gamma(T)]$, which is a tropical relative stable map to $\PP^1_{\trop}$, up to additive $\RR$ action on the base, i.e. up to a choice of root vertex for the tree $\Gamma(T)$. 

\begin{proposition}
\normalfont
{\it (Functorial tropicalization for rubber target)} \label{prop-functorialtroprubber}
The map $trop:\Mbar^{\an}(\bm x)\to \overline M^{\trop}(\bm x)$ is a continuous projection. Moreover, 
\begin{enumerate}
\item there is an isomorphism of extended cone complexes with integral structure,
\[
\overline M^{\trop}(\bm x) \xrightarrow{\sim} \overline \Sigma(\Mbar^{}(\bm x));
\]
\item the map $trop$ is functorial for tautological branch and stabilization morphisms. That is, the following diagram commutes:

\[
\begin{tikzcd}
\overline M^{\an}(\bm x) \arrow{dr}{\trop} \arrow[swap]{dd}{br^{\an}}  \arrow{rr}{st^{\an}}  &   &   \overline M^{\an}_{0,n}  \arrow{d}{\trop}\\
    & \overline M^{\trop}(\bm x)\arrow{d}{\mathfrak{br}}  \arrow{r}{\mathfrak{st}}  &  \overline M^{\trop}_{0,n}\\
{[\overline M^{\an}_{0,2+\epsilon s}/\mathcal S_s]} \arrow[swap]{r}{\trop} & {[\overline M^{\trop}_{0,2+\epsilon s}/\mathcal S_s]}. & 
\end{tikzcd}
\]
\end{enumerate}
\end{proposition}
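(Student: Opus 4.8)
The plan is to reduce the entire statement to the functorial tropicalization theorem for $\overline M_{0,n}$ (Theorem~\ref{thm-functorialtropm0n}) together with the combinatorial identifications already established, namely Proposition~\ref{prop-troprubber} and Proposition~\ref{prop-subdivision}. The main input is that the stabilization morphism $st:\overline M(\bm x)\to\overline M_{0,n}$ is a proper birational morphism of toroidal embeddings (Proposition~\ref{prop-stab}), and in fact $\overline M(\bm x)$ is obtained from $\overline M_{0,n}$ by a sequence of toroidal blowups dual to the weighted stellar subdivisions of Proposition~\ref{prop-subdivision}. Granting this, the first assertion is essentially formal: a proper birational toroidal morphism induces an isomorphism on generic fibers of analytifications, hence an isomorphism of Thuillier skeletons at the level of topological spaces, and the subdivision structure refines the integral/cone-complex structure compatibly, so $\overline\Sigma(\overline M(\bm x))$ is canonically the subdivided cone complex $\Delta^{\rub}_{\bm x}$, which by construction is $\overline M^{\trop}(\bm x)$. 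The content of part (1) is therefore to identify the abstract cone complex of the toroidal boundary of $\overline M(\bm x)$ with the moduli-of-tropical-maps description, and this is exactly what the bijection of Proposition~\ref{prop-troprubber} refined by Proposition~\ref{prop-subdivision} provides.

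In detail, I would proceed as follows. First, establish the claim that $\overline M(\bm x)$ is toroidal with boundary a (stack-theoretic) strict normal crossings divisor — this is asserted at the end of Section~\ref{sec-prelim}, and one should point to the standard description of the boundary strata of relative stable maps (degenerations of source and target, predeformability matching expansion factors). Second, analyze the strata: a boundary stratum of $\overline M(\bm x)$ corresponds to a combinatorial type $\Theta=[\Gamma\to\Gamma_T]$, and the associated cone $\sigma_\Theta$ has dimension equal to the number of independent smoothing parameters, which by the predeformability/expansion-factor bookkeeping in the discussion preceding Proposition~\ref{prop-functorialtroprubber} is exactly the number $B$ = (finite edges of $\Gamma_T$) + (contracted edges of $\Gamma$) appearing in the Construction. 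The gluing of these cones under specialization of combinatorial types matches the gluing in $\overline M^{\trop}(\bm x)$, and the integral structure is read off from the local monomial coordinates $xy=f$, $uv=f^d$ exactly as in the paragraph on expansion factors. This yields the isomorphism $\overline M^{\trop}(\bm x)\xrightarrow{\sim}\overline\Sigma(\overline M(\bm x))$, and the statement that $trop$ is the projection to the skeleton follows because the skeleton retraction is characterized by sending a $K$-point to the metrized dual graph of the special fiber of the stable limit (after base change to make it exist, using properness of $\overline M(\bm x)$) — which is precisely the construction of $trop$ given just above the proposition.

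Finally, for part (2), commutativity of the diagram is checked at the level of $K$-points, which suffices since all spaces and maps are (continuous maps of) analytifications/skeletons and $K$-points are dense. A $K$-point of $\overline M^{\an}(\bm x)$ is a family $[C\to T]$ over $\mathrm{Spec}(R')$; $st^{\an}$ sends it to the stabilized source family $C^{\mathrm{st}}$, and the construction of $trop$ commutes with passing to dual graphs and stabilizing $2$-valent vertices, which is exactly the tropical stabilization $\mathfrak{st}$ — this uses Theorem~\ref{thm-functorialtropm0n} applied to $C^{\mathrm{st}}$. Likewise $br^{\an}$ records the base curve $T$ with its marked branch points, and $trop$ of this is the line graph $\Gamma_T$ with marked vertices, i.e. $\mathfrak{br}$ of the tropicalization; here one invokes functorial tropicalization for the Losev--Manin space (another instance of the $\overline M_{0,m}$ case, with heavy/light markings, as in~\cite{CHMR14}). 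The two triangular faces of the diagram then commute by these two identifications, and the whole diagram commutes by pasting.

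\textbf{Main obstacle.} I expect the genuinely delicate point to be verifying that the cone complex $\overline\Sigma(\overline M(\bm x))$ coming from the toroidal structure really is the \emph{subdivided} fan $\Delta^{\rub}_{\bm x}$ of Proposition~\ref{prop-subdivision}, with the correct integral structure and lattice lengths, rather than merely being abstractly homeomorphic to $\overline M^{\trop}(\bm x)$. The subtlety is that the forgetful map $st$ contracts certain components and the expansion factors $d$ in the local equations $uv=f^d$ mean the induced maps on lattices are not primitive; one must check that the resulting refinement of $\Delta_n$ is exactly the iterated \emph{weighted} stellar subdivision, with the weights matching the expansion factors prescribed by $\bm x$. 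Pinning down these lattice indices — equivalently, checking that the toroidal charts of $\overline M(\bm x)$ have the predicted multiplicities — is where the real work lies; everything else is a bookkeeping exercise combining Theorem~\ref{thm-functorialtropm0n} with the strata description.
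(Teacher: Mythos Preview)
Your detailed argument in the second paragraph (identifying boundary strata with combinatorial types $\Theta$, reading off monomial coordinates from the node-smoothing parameters $xy=f$, $uv=f^d$, and matching the integral structure) is essentially the paper's own proof of part~(1). The paper phrases this as: the deformation parameters $f_i$ of the nodes of $T$ give a basis for the monoid of effective Cartier divisors near a point $P$, and their valuations are exactly the edge lengths assigned by $trop$, which is the explicit description of Thuillier's retraction in~\cite[Section~5.2]{ACP}. Your ``main obstacle'' paragraph correctly identifies where the work is.

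There is, however, a circularity in your overview (first paragraph) that you should excise. You propose to take as input that $\overline M(\bm x)$ is obtained from $\overline M_{0,n}$ by the sequence of toroidal blowups dual to the subdivision of Proposition~\ref{prop-subdivision}. In the paper this is Proposition~\ref{prop:int}, and its proof \emph{uses} Proposition~\ref{prop-functorialtroprubber}: one first shows (here) that $st$ is toroidal and induces $\mathfrak{st}$ on skeletons, and only then invokes~\cite[II, Theorem~6]{KKMSD} to conclude that $\overline M(\bm x)$ is the toroidal modification determined by that subdivision. So you cannot assume the blowup description; you must establish the skeleton identification directly from the strata/local-coordinate analysis, which is what your second paragraph already does.

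For part~(2) the paper takes a slightly sharper route than your density-of-$K$-points argument. Rather than checking commutativity pointwise, it computes that for a node $q$ of $C^{st}$ stabilizing from nodes $q_1,\ldots,q_c$ of $C$, one has $st^\star(\xi_q)=\prod_j \xi_{q_j}$, so $st$ is locally given by monomials and is dominant in each chart; hence $st^{\an}$ restricts to a map of skeletons, and the induced map is then identified with $\mathfrak{st}$ by the argument of~\cite[Proposition~8.2.4]{ACP}. This simultaneously proves that $st$ is toroidal (needed later for Proposition~\ref{prop:int}) and that the diagram commutes. Your $K$-point check gives the commutativity but not the toroidality, so you would still owe the monomial computation somewhere.
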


\begin{proof} We remark that such results have been carried out in various contexts, for moduli spaces of stable curves, weighted stable curves, and spaces of admissible covers in~\cite{ACP,CMR14,U14}. The Statement (2) above is closely related to the functoriality results for source and branch maps for tropicalizations of admissible covers~\cite{CMR14}. We outline the proof in our case. 

Note that the top dimensional strata of the tropical moduli space are in bijection with the zero strata of the space $\overline M(\bm x)$. Let $[C\to T]$ be a family of relative stable maps over $Spec(R)$, representing a point $P$ over a valuation ring $R$. In other words, $P$ is a point of $\Mbar^{\an}(\bm x)$. Let $[\Gamma(C)\to \Gamma(T)]$ be the map of dual graphs of special fibers. There exists a neighborhood of $P$ such that each node $q_i$ of $T$ has local defining equation $xy = f_i$. These functions $f_i$ cut out a divisor $D_i$ in a neighborhood of $P$, parametrizing those deformations of $[C\to T]$ where the $i$th node persists. These parameters $f_i$ thus form the basis for the monoid of effective Cartier divisors near $P$. That is, the $f_i$ yield a system of monomial coordinates near $P$. The valuations of these parameters are precisely the lengths assigned in the above construction, and coincide with the explicit description of Thuillier's retraction~\cite[Section 5.2]{ACP}. Thus, the tropicalization map coincides with the underlying set theoretic map of Thuillier's continuous projection, and the first result follows. 

Now consider the stabilization morphism $st: \overline M(\bm x) \to \overline M_{0,n}$. Consider a point $P\in \overline M(\bm x)$, representing the map $[C\to T]$. The universal family of $\overline M_{0,n}$ over the point $st(P)$ is the stabilization $C^{st}$ of the source curve $C$. Let $q$ be a node of $C^{st}$, and let $q_1,\ldots, q_{c}$ be the nodes of $C$ stabilizing to $q$. The pullback $st^\star(\xi_q)$ of the deformation parameter of $q$ is given by 
\[
st^\star(\xi_q) = \prod_{j = 1}^c \xi_{q_j},
\]
where $\xi_{q_j}$ is the deformation parameter of the node $q_j$. Thus, the map $st$ is locally formally given by monomials. It is clearly dominant in local charts, and thus, $st^{an}$ restricts to a map between skeletons. 

Finally, we must verify that the restriction of this analytic stabilization morphism coincides with tropical stabilization. This follows from identical arguments to~\cite[Proposition 8.2.4]{ACP}. The case of the branch map follows by similar arguments, c.f.~\cite[Section 5.2]{CMR14}.
\end{proof}

\subsubsection{Functorial tropicalization for parametrized target} A very similar argument to the above one can be used to prove the following result for parametrized target spaces. The tropicalization map is constructed analogously. A point of $\Mbar_r^{\an}(\PP^1,\bm x)$ is represented by a morphism from a rank $1$ valuation ring $R$
\[
Spec(R)\to \Mbar_r^{\an}(\PP^1,\bm x).
\]
Pulling back the universal family and the universal map, and taking metrized dual graphs as above, we obtain a map of graphs $\Gamma(C)\to \Gamma(T)$. The contraction morphism $\mathfrak c$ yields an identification of the main component on $\Gamma(T)$, which we denote as $0$. This data determines a tropical relative stable map with parametrized target $\PP^1_{\trop}$. We thus obtain a map 
\[
trop: \overline M_r^{\an}(\PP^1,\bm x)\to\overline M_r^{\trop}(\PP_{\trop}^1,\bm x).
\]
\begin{proposition}~\label{prop: ev-is-functorial}
The map $trop: \overline M_r^{\an}(\PP^1,\bm x)\to\overline M_r^{\trop}(\PP_{\trop}^1,\bm x)$ is a continuous projection, canonically identified with the projection to the Thuillier skeleton of $\overline M^{\an}(\PP^1,\bm x)$. Furthermore, tropicalization is compatible with evaluation morphisms. That is, the following diagram commutes
\[
\begin{tikzcd}
\Mbar^{\an}_r(\PP^1,\bm x)\arrow{r}{\trop} \arrow[swap]{d}{ev_i^{\an}} & \overline M^{\trop}_r(\PP^1_{\trop},\bm x) \arrow{d}{\mathfrak{ev}_i} \\
\PP^1_{\an}\arrow[swap]{r}{\trop} & \PP^1_{\trop}.\\
\end{tikzcd}
\]
\end{proposition}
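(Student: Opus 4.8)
The plan is to follow the proof of Proposition~\ref{prop-functorialtroprubber} essentially verbatim; the only genuinely new ingredients are that the contraction morphism $\mathfrak c$ distinguishes a root vertex on the target, and the additional verification that $trop$ intertwines the classical and tropical evaluation maps. For the first assertion I would represent a point $P\in\overline M_r^{\an}(\PP^1,\bm x)$ by a morphism $\operatorname{Spec}(R)\to\overline M_r(\PP^1,\bm x)$ from a valuation ring $R$, obtained after a finite base change by properness, and pull back the universal curve, the universal map, and the universal contraction to get a family $[f\colon (C,p_1,\dots,p_{n+r})\to T\xrightarrow{\mathfrak c}\PP^1]$ over $\operatorname{Spec}(R)$. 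Metrizing the dual graphs of the special fibres of $C$ and $T$ exactly as for rubber targets yields a harmonic map $\Gamma(C)\to\Gamma(T)$; the component of the special fibre of $T$ on which $\mathfrak c$ restricts to an isomorphism supplies the distinguished vertex $0$, and the case analysis (Vertices, Edges, Expansion factors) preceding Proposition~\ref{prop-functorialtroprubber} shows the result is a tropical relative stable map to parametrized $\PP^1_{\trop}$ with ramification $\bm x$. Since $\overline M_r(\PP^1,\bm x)$ is a toroidal embedding with strict normal crossings boundary (Section~\ref{sec-prelim}), the deformation parameters of the nodes of $T$, together with those of the contracted nodes of $C$, form a basis for the monoid of effective Cartier divisors supported on the boundary near $P$, hence a local system of monomial coordinates whose valuations are exactly the edge lengths just assigned. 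This identifies $trop$ set-theoretically with Thuillier's retraction onto the skeleton of $M_r(\PP^1,\bm x)\hookrightarrow\overline M_r(\PP^1,\bm x)$, simultaneously identifies $\overline\Sigma(\overline M_r(\PP^1,\bm x))$ with $\overline M_r^{\trop}(\PP^1_{\trop},\bm x)$, and yields continuity.

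For compatibility with $ev_i$, since $\mathfrak{ev}_i\circ trop$ and $trop\circ ev_i^{\an}$ are both continuous it suffices to compare them on points $P$ as above. Fix a non-relative marking $i\in\{n+1,\dots,n+r\}$ and identify $\PP^1_{\trop}=\RR\sqcup\{\pm\infty\}=trop(\PP^1)$ with the sign conventions of Definition~\ref{def-trop-parametrized}. On one side, $ev_i^{\an}(P)$ is the $K$-point $\mathfrak c(f(p_i))\in\PP^1(K)$, so $trop(ev_i^{\an}(P))$ is the valuation of its $\mathbb G_m$-coordinate, taken in $\RR\sqcup\{\pm\infty\}$ with $\val(0)=+\infty$. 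On the other side, the section $p_i$ is disjoint from the nodes of $C$ by stability, so it specializes into the open locus of a unique component $C_v$; the marked leaf $p_i$ of $\Gamma(C)$ is attached at $v$, and since it is contracted, $\mathfrak{ev}_i(trop(P))$ is the position in $\Gamma(T)\subseteq\PP^1_{\trop}$ of the vertex $w:=g(v)$, that is, the signed sum of the lengths of the edges along the path from $0$ to $w$.

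The two descriptions are reconciled by the dictionary, following Gubler~\cite{Gub13} and the discussion after Definition~\ref{def-trop-parametrized}, between the subdivision of $\PP^1_{\trop}$ induced by the family and the toric degeneration $T\to\operatorname{Spec}(R)$ of $\PP^1$: the component $T_w$ is precisely the locus on which the target coordinate has valuation equal to the position $p(w)$ of $w$. Because $f(C_v)\subseteq T_w$ and predeformability forbids a smooth point of $C$ from mapping to a node of $T$, the section $f(p_i)$ specializes into the open part of $T_w$; applying $\mathfrak c$ --- an isomorphism on $T_0$, and the contraction onto the constant section $0$ or $\infty$ otherwise, with value $\pm\infty$ exactly when $T_w$ already appears in the generic fibre --- gives $trop(ev_i^{\an}(P))=p(w)=\mathfrak{ev}_i(trop(P))$, and the square commutes.

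I expect the main obstacle to be not any single step but the assembly of the local toroidal picture: establishing that $\overline M_r(\PP^1,\bm x)$ is toroidal with the stated boundary, matching the monomial coordinates near a boundary point with the node-deformation parameters, and keeping careful track of the compactified directions of $\PP^1_{\trop}$ --- in particular the fact that a non-relative marking whose image degenerates to $0$ or $\infty\in\PP^1$ corresponds to a tropical end sent off to $\pm\infty$. All of this runs in parallel with Proposition~\ref{prop-functorialtroprubber} and the treatments in~\cite{ACP,CMR14}.
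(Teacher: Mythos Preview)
Your argument is correct and follows the paper's proof closely for the first assertion. For the compatibility with evaluation, the paper takes a slightly more structural route: rather than comparing the two compositions pointwise via the toric-degeneration dictionary, it verifies directly that $ev_i$ is a \emph{toroidal} morphism by exhibiting the explicit monomial pullback formula
\[
ev_i^\star(t) \;=\; \prod_{0<j<r}\xi_j,
\]
where $t$ is the toric coordinate on $\PP^1$ near $0$, the $\xi_j$ are the deformation parameters of the nodes of $T$ between $T_0$ and the component $T_r$ carrying $f(p_i)$, and the expanded target is realized as a toric blowup of $\PP^1\times\A^1$ in the special fibre. From this, $ev_i^{\an}$ restricts to a map of skeletons automatically, and the restriction is visibly $\mathfrak{ev}_i$ since $\val(t) = \sum_j \val(\xi_j)$ is exactly the signed path length from $0$ to $w$. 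Your Gubler-style argument is the geometric unpacking of this same identity; the paper's version has the advantage of making the toroidal structure of $ev_i$ explicit (which is also what one needs later to extend $ev_i$ to a toric morphism), while yours is perhaps more conceptual. One small wording issue: your phrase ``with value $\pm\infty$ exactly when $T_w$ already appears in the generic fibre'' is garbled --- $T_w$ is always a component of the special fibre; what you mean is that the valuation is $\pm\infty$ exactly when the section $f(p_i)$ lands in $\{0,\infty\}\subset\PP^1$ on the generic fibre, which for a non-relative marking does not occur.
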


\begin{proof}
The agreement of the Thuillier projection and set theoretic tropicalization maps is identical to that of the rubber case. The evaluation morphisms are the only new feature of the parametrized space. We want to check that the map $ev_i^{\an}$ restricts to a map between skeletons, and moreover, that this restricted map coincides with the tropical evaluation morphism. To check that $ev_i^{\an}$ restricts to skeletons, it is sufficient to show that $ev_i$ is locally analytically given by a dominant equivariant map of toric varieties. 

Let $P$ be a point of $\overline M(\PP^1,\bm x)$ represented by a relative stable map $[h:C\to T]$. Let $ev_i(h)$ be the image of the $i$th marked point on $C$. Moreover, assume that 
\[
T = T_{-k}\cup T_{-k+1}\cup \cdots\cup \boxed{T_0} \cup \cdots \cup T_{\ell},
\]
is the expanded target, and $T_0$ is the main component. For any representative $h: C\to T$, the image of the $i$th mark lies in the smooth locus of $T$, and thus we may assume it lies on the component $T_r$. Notice that even though the point $h(p_i)$ can depend on the choice of representative in the isomorphism class of $h$, $r$ is well defined. 

Choose a local toric chart near $P$, with (a subset of the) local monomial coordinates given by the deformation parameters $\xi_i$ of the nodes of $T$. On $\PP^1$, we work affine locally, choosing the toric chart near $0$, given by the coordinate $t$. The expanded target $T$ may be constructed from the trivial family $\PP^1\times \A^1$, by toric blowup in the special fiber. Thus, $T$ is isomorphic to the fiber over $0$ of $\widetilde T\to \A^1$, and there is a blow down map
\[
\widetilde T \to \PP^1\times \A^1.
\]
Restricting to the special fiber, we obtain the contraction map $\mathfrak{c}: T\to \PP^1$ to the main component. A standard local computation shows $ev_i^\star(t) = \prod_{0<i<r} \xi_i$. Thus, $ev_i$ induces a map which locally analytically pulls back monomials to monomials. The map is clearly dominant in each chart, and thus is toroidal. It is elementary to check that the restriction to the skeleton $\overline M^{\trop}_r(\PP^1_{\trop}, \bm x)$ coincides with the tropical evaluation morphism.
\end{proof} 

\subsection{Comparison with admissible covers} In~\cite{CMR14}, the authors study Berkovich skeletons of the spaces of admissible covers of curves of arbitrary genus. In the present case, where source and target have genus $0$, the spaces of admissible covers and rubber maps are closely related. Let $Adm(\bm x)$ be the space of admissible covers in the sense of Abramovich, Corti, and Vistoli~\cite{ACV}. There is a natural map obtained from admissible covers to rubber maps, obtained by forgetting the non-relative branch points, and stabilizing as necessary. We have a commutative diagram
\[
\begin{tikzcd}
Adm(\bm x)\arrow{r} \arrow{d} & \overline M(\bm x)\arrow{d} \\
\overline M_{0,n}\arrow{r} & {[\overline M_{0,2+s\cdot\epsilon}/S_s]}\\
\end{tikzcd}
\]

There is a natural forgetful morphism on the tropical side, 
\[
\Lambda^{\trop}: Adm^{\trop}(\bm x)\to \overline M^{\trop}(\bm x),
\]

contracting infinite edges with non-special branching orders, and all preimages. We recall~\cite[Theorem 1]{CMR14} that the set theoretic tropicalization factors as
\[
\begin{tikzcd}
Adm^{\an}(\bm x) \arrow{r}{\bm p} \arrow[bend left]{rr}{\trop} & \overline \Sigma(Adm(\bm x)) \arrow{r}{trop_\Sigma} & Adm^{\trop}(\bm x) \\
\end{tikzcd}
\]

where $\bm p$ is the projection to the skeleton, and $trop_\Sigma$ is an isomorphism upon restriction to any cone of the skeleton. The following corollary is a straightforward consequence of the discussion in the previous section. 

\begin{corollary}
The restriction of $\Lambda^{an}$ to the skeleton $\overline \Sigma(Adm(\bm x))$, composed with $trop_\Sigma$ coincides with the map $\Lambda^{\trop}$. That is, the following diagram commutes:
\[
\begin{tikzcd}
Adm^{\an}(\bm x) \arrow{rr}{\trop} \arrow[swap]{d}{\Lambda^{\an}} &  & Adm^{\trop}(\bm x) \arrow{d}{\Lambda^{\trop}}\\
\overline M^{\an}(\bm x) \arrow[swap]{rr}{\trop} & & \overline M^{\trop}(\bm x)\\ 
\end{tikzcd}
\]
\end{corollary}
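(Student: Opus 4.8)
The plan is to show that $\Lambda$ is a toroidal morphism — in suitable local charts near any point of $Adm(\bm x)$ it is given by monomials in the node-smoothing parameters — so that $\Lambda^{\an}$ restricts to a morphism of skeletons, and then to identify this restricted morphism, cone by cone, with the combinatorial contraction $\Lambda^{\trop}$. This is entirely parallel to the treatment of the stabilization morphism in the proof of Proposition~\ref{prop-functorialtroprubber} and to the source and branch functoriality statements of~\cite[Section 5.2]{CMR14}, so once the local picture is in place I would mostly reduce to citing those arguments.

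Concretely, a point $P$ of $Adm^{\an}(\bm x)$ is represented, after a finite base change using properness of $Adm(\bm x)$, by a map $Spec(R)\to Adm(\bm x)$; pulling back the universal cover gives a family of admissible covers $[C\to D]$ over $Spec(R)$ with metrized dual graphs $\Gamma(C)\to\Gamma(D)$, the edge lengths being valuations of node-smoothing parameters exactly as in the construction preceding Proposition~\ref{prop-functorialtroprubber}. The image $\Lambda(P)$ is obtained by deleting the non-relative branch sections of $D$ together with their preimage sections on $C$, and then stabilizing source and target. On dual graphs this does two things: it erases the infinite leaf edges recording those branch points and all their preimages, and it merges pairs of bounded edges that become adjacent at a resulting $2$-valent vertex. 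The first operation is exactly the contraction of infinite edges with non-special branching orders and all their preimages, i.e.\ the definition of $\Lambda^{\trop}$; for the second, the deformation parameter of a merged node is the product of the parameters of the nodes being merged — as in the formula $st^\star(\xi_q)=\prod_j\xi_{q_j}$ from the proof of Proposition~\ref{prop-functorialtroprubber} — so under valuation the merged edge length is the sum of the two lengths, which is precisely how tropical stabilization merges edges. Hence $\Lambda$ pulls back monomial coordinates to monomials, is visibly dominant in each chart, and $\Lambda^{\an}$ therefore carries $\overline\Sigma(Adm(\bm x))$ into $\overline\Sigma(\overline M(\bm x))$, with the restricted map agreeing with $\Lambda^{\trop}$ on each cone.

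Combining this with the factorization $\trop = trop_\Sigma\circ\bm p$ recalled from~\cite[Theorem~1]{CMR14}, with the fact that $trop_\Sigma$ is an isomorphism on each cone, and with the identification $\overline M^{\trop}(\bm x)\xrightarrow{\sim}\overline\Sigma(\overline M(\bm x))$ of Proposition~\ref{prop-functorialtroprubber}, the square commutes: chasing $P$ through $\Lambda^{\an}$ and then $\trop$ down to $\overline M^{\trop}(\bm x)$ produces the tropicalization of $[C\to D]$ followed by $\Lambda^{\trop}$, which equals the result of going across and then down. The main obstacle I anticipate is purely bookkeeping: keeping careful track of which leaf edges (branch points versus their ramification preimages) are contracted and of the new $2$-valent vertices created on both source and target by destabilization, and checking compatibility at the boundary $\infty$-strata of the extended cone complexes and not merely on the open cones. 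None of this should present a genuine difficulty, since the local toric charts and the behavior of Thuillier's retraction under such monomial maps are already controlled by~\cite{ACP} and~\cite{CMR14}.
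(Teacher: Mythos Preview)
Your proposal is correct and matches the paper's approach: the paper states this corollary without proof, calling it ``a straightforward consequence of the discussion in the previous section,'' and your argument is precisely the unwinding of that discussion---showing $\Lambda$ is toroidal via the same monomial/node-parameter analysis used for $st$ in Proposition~\ref{prop-functorialtroprubber}, then invoking the factorization $\trop = trop_\Sigma\circ\bm p$ from~\cite{CMR14}. You have supplied exactly the details the paper leaves implicit.
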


\section{Tropical compactification}\label{sec: trop-compactification-rsm}

In this section, we explain how to deduce Theorem~\ref{thm: tropical-compactification} from the results in the preceding section. We focus first on the case of rubber target, and then indicate the necessary changes in the parametrized case. 

\subsection{The case of rubber target}

Let $U\hookrightarrow X$ be a toroidal embedding. A toroidal modification of $X$ is a new toroidal embedding $U\hookrightarrow X'$, together with a map 
\[
\pi: X'\to X,
\]
that is proper, birational, and toroidal. Recall that the toroidal morphism $\pi: X' \to X$ is a morphism for which there exist local toric charts on $X'$ and $X$, such that $\pi$ is given by a dominant equivariant map of toric varieties. There is an induced map on cone complexes $\Sigma(Y)\to \Sigma(X)$. Conversely, given a subdivision of $\widetilde \Sigma\to\Sigma(X)$, one constructs a unique associated toroidal modification $U\hookrightarrow Y$, such that $\Sigma(Y)\cong \widetilde \Sigma$. See~\cite[Section 1.4]{AK00} and~\cite[Chapter 2]{KKMSD} for further details. 

It follows from Proposition \ref{prop-functorialtroprubber} and Theorem \ref{thm-functorialtropm0n} that the tropical moduli spaces $ M^{\trop}(\bm x)$ resp.\ ${M}_{0,n}^{\trop}$ can be identified with the skeletons $\Sigma(\Mbar^{}(\bm x))$ resp.\ $\Sigma(\overline M_{0,n})$. By Proposition \ref{prop-subdivision}, the tropical stabilization morphism $\mathfrak{st}: \Sigma(\overline M(\bm x))\to \Sigma(\overline M_{0,n})$ is a subdivision. From the above, we obtain an induced toroidal modification, and a new toroidal compactification of $M_{0,n}$. 

\begin{proposition}\label{prop:int}
The toroidal modification induced by the  tropical stabilization morphism $\mathfrak{st}: \Sigma(\overline M(\bm x))\to \Sigma(\overline M_{0,n})$ is identified with $\overline M(\bm x)$. 
\end{proposition}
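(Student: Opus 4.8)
The plan is to exhibit $\overline M(\bm x)$ directly as the toroidal modification of $\overline M_{0,n}$ attached to the subdivision $\mathfrak{st}$, and then invoke the equivalence between subdivisions of the cone complex and toroidal modifications~\cite[Section 1.4]{AK00},~\cite[Chapter 2]{KKMSD} to conclude that it agrees with the modification constructed abstractly from $\mathfrak{st}$.

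First I would recall from Proposition~\ref{prop-stab} that the stabilization morphism $st: \overline M(\bm x)\to \overline M_{0,n}$ is proper and birational and restricts to an isomorphism over $M_{0,n}$, and that both spaces are toroidal embeddings without self-intersection, the boundary being the (stack-theoretic strict normal crossings) locus of curves with a node, as recorded in Section~\ref{sec-prelim}. The key point is then that $st$ is a \emph{toroidal} morphism. This is essentially the local computation carried out in the proof of Proposition~\ref{prop-functorialtroprubber}: near a point of $\overline M(\bm x)$ the deformation parameters of the nodes of the source curve, together with those of the target, furnish a system of monomial coordinates, and for a node $q$ of the stabilized source with preimages $q_1,\dots,q_c$ one has $st^\star(\xi_q)=\prod_{j=1}^{c}\xi_{q_j}$, while the remaining coordinates are pulled back to units times coordinates. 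Hence $st$ is \'etale-locally a dominant equivariant morphism of toric varieties, i.e.\ a toroidal modification of $\overline M_{0,n}$.

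Next I would identify the subdivision of $\Sigma(\overline M_{0,n})$ induced by this modification with $\mathfrak{st}$. A toroidal modification induces a subdivision on cone complexes, and by the functoriality statement of Proposition~\ref{prop-functorialtroprubber}(2) combined with Theorem~\ref{thm-functorialtropm0n}, tropicalization intertwines $st$ with $\mathfrak{st}$; under the canonical identifications $\Sigma(\overline M(\bm x))\cong M^{\trop}(\bm x)$ and $\Sigma(\overline M_{0,n})\cong M^{\trop}_{0,n}$, the map on skeletons induced by $st$ is exactly $\mathfrak{st}$, which by Proposition~\ref{prop-subdivision} is a subdivision. Thus $\overline M(\bm x)$ is the toroidal modification of $\overline M_{0,n}$ associated to $\mathfrak{st}$, and by uniqueness of the modification attached to a subdivision it is identified with the one in the statement.

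The main obstacle is the middle step: pinning down the toroidal structure on $\overline M(\bm x)$ --- in particular handling the Deligne--Mumford stack subtleties of the space of relative stable maps --- and upgrading the local monomial description of $st$ from Proposition~\ref{prop-functorialtroprubber} to the assertion that $st$ is toroidal in the technical sense, namely that one can choose compatible \'etale toric charts on source and target in which $st$ is a dominant equivariant toric morphism. Once this is established, the identification of the induced subdivision with $\mathfrak{st}$ and the final conclusion are formal consequences of the functoriality already proved and the subdivision/modification dictionary.
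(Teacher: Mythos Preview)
Your proposal is correct and follows essentially the same approach as the paper: show that $st$ is proper, birational, and toroidal, identify the induced map on cone complexes with $\mathfrak{st}$ via Proposition~\ref{prop-functorialtroprubber}, and conclude via the subdivision/modification dictionary of~\cite{KKMSD}. The only substantive addition in the paper's version is an explicit argument for the isomorphism $M(\bm x)\cong M_{0,n}$ on the interior---namely that the divisor $\sum x_i p_i$ on a smooth rational curve determines a rational function up to a multiplicative constant, which in the rubber setting is exactly the map---whereas you simply cite Proposition~\ref{prop-stab} for this.
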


\begin{proof}
We first observe that the interior of $\overline M(\bm x)$ is identified with $M_{0,n}$. Given a map between smooth curves $[C\to \PP^1]$, we obtain a pointed rational curve $(C,p_1,\ldots, p_n)$ where $p_i$ is the $i$th special ramification point on the source curve $C$. Conversely, given $\bm x$ and the pointed curve $(C,p_1,\ldots, p_n)$, the divisor $\sum x_i p_i$ determines a rational function up to multiplicative constant. Since we are working with the rubber target, this determines the map $[C\to \PP^1]$. 

We know from Proposition~\ref{prop-functorialtroprubber} that $st$ induces a map on cone complexes, and furthermore that this map is a subdivision. Thus, $st$ is a toroidal, proper, birational map inducing a subdivision $\mathfrak{st}$. The result now follows from~\cite[Chapter II, Theorem 6]{KKMSD}.
\end{proof}

\subsection{Proof of Theorem~\ref{thm: tropical-compactification}} Let $M(\bm x)\subset \overline M(\bm x)$ be the locus of maps from smooth curves. Fix the identification $M(\bm x)$ with $M_{0,n}$, taking a map $[C\to \PP^1]$ to the source curve $C$, marked at the special ramification locus. 

By the discussion in the proof of Proposition \ref{prop:int}, we obtain an embedding of $M(\bm x)$ into a torus $T$ by producing an embedding of $M_{0,n}$ into $T$. We use the embedding due to Kapranov~\cite{Kap93}. Fix a vector space $V$ of dimension $n$, and let $T'$ be the $n$-dimensional torus that dilates the coordinates on $V$. There is a map
\[
M_{0,n}\hookrightarrow G^0(2,n)/\!/T',
\]
associating to $n$-points on $\PP^1$ a $2$-plane defined by the span of the $2\times n$ matrix containing homogeneous coordinates of $n$-points on $\PP^1$. The open set $G^0(2,n)$ is the intersection of the image of the Grassmannian  via the  Pl\"ucker embedding  with the torus of $\PP^{\binom{n}{2}-1}$.

The Pl\"ucker embedding now yields an embedding into the Chow quotient
\[
M_{0,n}\hookrightarrow \PP^{\binom{n}{2}-1}/\!/ T', 
\]

and the closure of the image of $M_{0,n}$ gives the stable curves compactification $\overline M_{0,n}$.

Denote the fan of this Chow quotient by $\mathcal F$. By work of Gibney and Maclagan~\cite{GM07}, the tropicalization of $M_{0,n}$ in this embedding equals the subfan $\Delta_n\subset \mathcal F$, where
\[
\Delta_n \cong M_{0,n}^{\trop},
\] 
see Page \ref{deltan}.
From Propisition \ref{prop-subdivision}, the morphism $\mathfrak{st}$ yields a simplicial subdivision $\Delta^{\rub}_{\bm x}$ of $\Delta_n$, and there is an associated proper, birational, toric morphism
\[
X(\Delta_{\bm x}^{\rub})\to X(\Delta_n). 
\]
Let $Y$ be the strict transform of $\overline M_{0,n}$ in $X(\Delta_{\bm x}^{\rub})$. $Y$ inherits the structure of a toroidal embedding from the toric boundary of $X(\Delta_{\bm x}^{\rub})$, and the induced map
\[
Y\to \overline M_{0,n}
\]
is a toroidal modification with cone complex $\overline M^{\trop}(\bm x)$. It now follows from the above proposition and~\cite[Section 2.2]{KKMSD} that $Y\cong \overline M(\bm x)$. To see that the compactification is sch\"on, observe that $\overline M_{0,n}$ is a sch\"on compactification of the space $M(\bm x)$ and $\overline M(\bm x)$ is a tropical compactification. By~\cite[Theorem 1.4]{Tev07} we may conclude that $\overline M(\bm x)$ is sch\"on.
 \qed

When there are $r$ additional non-relative marks on the source, the natural stabilization morphism takes values in $\overline M_{0,n+r}$. The same argument as above yields the following. 

\begin{theorem}
There is a simplicial (noncomplete) toric variety $X(\Delta_{\bm x,r}^{\rub})$ with dense torus $T^{\rub}_{\bm x, r}$, and an embedding $M_r(\bm x)\hookrightarrow T^{\rub}_{\bm x, r}$, such that the closure of $M_r(\bm x)$ in $X(\Delta_{\bm x,r}^{\rub})$ is identified with the coarse moduli space $\overline M_r(\bm x)$. This compactification of $M_r(\bm x)$ is sch\"on. Moreover, the cone complex underlying $\Delta^{\rub}_{\bm x, r}$ is naturally identified with the tropical moduli space $\overline M_r^{\trop}(\bm x)$. 
\end{theorem}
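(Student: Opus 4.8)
The plan is to run the proof of Theorem~\ref{thm: tropical-compactification} essentially verbatim, replacing $n$ by $n+r$ and $\Delta_n$ by $\Delta_{n+r}$ throughout, and invoking the versions of the auxiliary results (Propositions~\ref{prop-subdivision} and~\ref{prop-functorialtroprubber}, Proposition~\ref{prop-stab}) that already account for the $r$ additional non-relative marks. So this is a proposal for an adaptation rather than a new argument.

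First I would pin down the interior. A rubber relative stable map from a \emph{smooth} source curve carrying the $n$ relative ramification marks together with $r$ extra marks is, upon forgetting the map, a point of $M_{0,n+r}$; conversely, given $(C,p_1,\dots,p_{n+r})\in M_{0,n+r}$, the divisor $\sum_{i\le n} x_i p_i$ determines a rational function on $C$ up to scaling, hence a rubber map, and the remaining $r$ marks are unconstrained. Thus $M_r(\bm x)\cong M_{0,n+r}$, compatibly with the $r$-marked stabilization morphism of Proposition~\ref{prop-stab}, which is birational and an isomorphism over $M_{0,n+r}$.

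Next I would embed $M_{0,n+r}$ in a torus $T^{\rub}_{\bm x,r}$ via the Kapranov construction~\cite{Kap93}, so that the closure is $\overline M_{0,n+r}$ and, by Gibney--Maclagan~\cite{GM07}, $\mathit{trop}(M_{0,n+r})=\Delta_{n+r}\cong M_{0,n+r}^{\trop}$ (see Page~\pageref{deltan}). By the $r$-marked form of Proposition~\ref{prop-subdivision}, the stabilization morphism $\mathfrak{st}\colon \overline M_r^{\trop}(\bm x)\to\overline M_{0,n+r}^{\trop}$ is an iterated weighted stellar subdivision; call the resulting simplicial refinement $\Delta^{\rub}_{\bm x,r}$ and let $Y$ be the strict transform of $\overline M_{0,n+r}$ under the proper birational toric morphism $X(\Delta^{\rub}_{\bm x,r})\to X(\Delta_{n+r})$. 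Then $Y$ is a toroidal modification of $\overline M_{0,n+r}$ whose cone complex is $\overline M_r^{\trop}(\bm x)$. The $r$-marked analogue of Proposition~\ref{prop-functorialtroprubber}, together with Theorem~\ref{thm-functorialtropm0n}, shows that $st^{\an}$ is toroidal and induces precisely the subdivision $\mathfrak{st}$ on skeletons, so by~\cite[Chapter~II, Theorem~6]{KKMSD} and~\cite[Section~2.2]{KKMSD} one gets $Y\cong\overline M_r(\bm x)$, with dense torus $T^{\rub}_{\bm x,r}$ and with the cone complex underlying $\Delta^{\rub}_{\bm x,r}$ identified with $\overline M_r^{\trop}(\bm x)$. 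Finally, $\overline M_{0,n+r}$ is a sch\"on compactification of $M_r(\bm x)$ and $\overline M_r(\bm x)$ is a tropical compactification of it, so sch\"onness follows from~\cite[Theorem~1.4]{Tev07}.

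The only point requiring genuine care, rather than copying, is the $r$-marked functorial tropicalization statement: one must check that the $r$ non-relative marks --- which on the tropical side appear as extra legs and may force additional contracted edges or $2$-valent vertices in the source graph --- do not disturb either the identification of $\overline M_r^{\trop}(\bm x)$ with the skeleton $\overline\Sigma(\overline M_r(\bm x))$ or the compatibility of $st$ with $\mathfrak{st}$. Since the deformation parameters of the nodes of the source curve (and their products under stabilization, as in the proof of Proposition~\ref{prop-functorialtroprubber}) still furnish the relevant local monomial coordinates, the argument goes through unchanged; I expect this to be the main obstacle only in the sense of bookkeeping, not of new ideas.
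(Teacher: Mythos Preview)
Your proposal is correct and matches the paper's approach exactly: the paper does not give a separate proof but simply states ``The same argument as above yields the following,'' and what you have written is a careful unpacking of what that same argument entails when $n$ is replaced by $n+r$. Your observation that the $r$-marked analogue of Proposition~\ref{prop-functorialtroprubber} is, strictly speaking, the one ingredient not explicitly stated in the paper is accurate, and your explanation of why it goes through (the deformation parameters of the nodes still furnish local monomial coordinates) is the right one.
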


\subsection{The case of parametrized target} We now indicate the changes necessary when the target is parametrized. In order to rigidify the problem, we only consider the case where the source curve has at least $1$ non-relative marking. That is, we consider space $\Mbar_r(\PP^1,\bm x)$ for $r>0$. 

Consider the natural tautological morphism
\[
st\times ev_{n+1}: M_r(\PP^1,\bm x) \to M_{0,n+r}\times \mathbb G_m,
\]
where $\mathbb G_m$ is the dense torus in the target $\PP^1$ (compare with the tropical version described below Proposition \ref{prop-subdivision}). Note that if $f\in M_r(\PP^1,\bm x)$ then $ev_{n+1}(f)$ is a point in $\mathbb G_m$, since the total preimage of $0$ and $\infty$ are marked. 

\begin{lemma}
The map 
\[
st\times ev_{n+1}: M_r(\PP^1,\bm x) \to M_{0,n+r}\times \mathbb G_m,
\]
is an isomorphism.
\end{lemma}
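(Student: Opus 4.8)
The plan is to construct an explicit inverse to $st\times ev_{n+1}$ and check that both composites are the identity. The key geometric input is that on the rubber side, a smooth pointed curve $(C,p_1,\ldots,p_n)$ together with $\bm x$ determines the map $[C\to\PP^1]$ only up to the $\mathbb G_m$-action on the target, whereas on the parametrized side the target is rigid; recording the image $ev_{n+1}(f)\in\mathbb G_m$ of the first non-relative point is exactly the data that kills this ambiguity.

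Concretely, I would first note that a point of $M_r(\PP^1,\bm x)$ is a map from a \emph{smooth} source curve, so the target is unexpanded, i.e.\ $T=\PP^1$ and the contraction $\mathfrak c$ is the identity; thus $M_r(\PP^1,\bm x)$ parametrizes genuine maps $f\colon(C,p_1,\ldots,p_{n+r})\to\PP^1$ with $f^\star(0)=\sum_{x_i>0}x_ip_i$, $f^\star(\infty)=\sum_{x_i<0}(-x_i)p_i$, and $p_{n+1},\ldots,p_{n+r}$ away from $\{0,\infty\}$. Given such an $f$, forgetting $f$ but keeping $(C,p_1,\ldots,p_{n+r})$ lands in $M_{0,n+r}$ (stability is automatic since the relative marks already rigidify $C$), and $ev_{n+1}(f)=f(p_{n+1})\in\mathbb G_m$; this is the forward map. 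For the inverse, start from $((C,p_1,\ldots,p_{n+r}),a)\in M_{0,n+r}\times\mathbb G_m$. The divisor $D=\sum_i x_ip_i$ has degree $0$ on the smooth rational curve $C$, hence is principal, so $\mathcal O_C(D)\cong\mathcal O_C$ and $D=\mathrm{div}(\varphi)$ for a rational function $\varphi$ unique up to a nonzero scalar; viewing $\varphi$ as a map $C\to\PP^1$ with $\varphi^\star(0)$ and $\varphi^\star(\infty)$ the positive and negative parts of $D$. The scalar ambiguity is a $\mathbb G_m$-torsor, and I would pin it down by the normalization $\varphi(p_{n+1})=a$ (here one uses $a\in\mathbb G_m$, i.e.\ $a\neq 0,\infty$, which holds because $p_{n+1}\notin\mathrm{supp}(D)$); this produces a unique $f$. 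One then checks the two constructions are mutually inverse — forward then back recovers $f$ because $\mathrm{div}(f)=D$ forces $f=\lambda\varphi$ and the condition $f(p_{n+1})=ev_{n+1}(f)$ fixes $\lambda$; back then forward is immediate. Finally I would promote this bijection-on-points to an isomorphism of schemes (or stacks) by doing the same construction in families: over a base $S$, a family of $(n+r)$-pointed smooth rational curves $\mathcal C\to S$ with sections, the line bundle $\mathcal O_{\mathcal C}(\sum x_i\sigma_i)$ is fiberwise trivial, so by cohomology and base change $\pi_\star\mathcal O_{\mathcal C}(\sum x_i\sigma_i)$ is a line bundle on $S$, and a section of $\mathbb G_m$ recording the value at $\sigma_{n+1}$ trivializes it and yields a map to $\PP^1_S$; this gives the inverse morphism at the level of functors of points.

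The main obstacle I anticipate is not the pointwise bijection, which is essentially linear algebra on $H^0(\PP^1,\mathcal O(\mathrm{pt}))$, but verifying cleanly that everything works in families and that the resulting map is an inverse \emph{morphism}, in particular handling the normalization of the scalar ambiguity uniformly (this is where $ev_{n+1}$ enters and why $r>0$ is needed), and making sure no stacky automorphisms are lost — since the source is smooth and carries the rigidifying relative marks, $M_r(\PP^1,\bm x)$ and $M_{0,n+r}\times\mathbb G_m$ are both honest varieties with no automorphisms, so this should be unobstructed. A minor point to address is that stability of the pointed curve $(C,p_1,\ldots,p_{n+r})$ on the target side is automatic: $\bm x$ is a sum-zero vector of \emph{nonzero} integers so at least two relative marks lie over each of $0$ and $\infty$ when both $\bm x^+$ and $\bm x^-$ are nonempty, hence $n\geq 2$ and the curve already has enough special points; the non-relative marks only add to this.
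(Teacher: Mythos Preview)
Your proposal is correct and follows exactly the same approach as the paper: the ramification data $\bm x$ determines the map up to a scalar, and evaluating at $p_{n+1}$ fixes that scalar. The paper's proof is a two-sentence sketch of precisely this idea, and your version is simply a more detailed and careful expansion (including the family argument, which the paper omits).
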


\begin{proof}
Given a pointed curve $(C,p_1,\ldots, p_{n+r})$, the ramification profile $\bm x$ determines the map $C\to \PP^1$, with ramification $x_i$ over $p_i$, up to a constant multiple. The image of $p_{n+1}$ now fully determines the map.
\end{proof}

By arguments in the preceding sections, we see that
\[
st\times ev_{n+1}: \overline M_r(\PP^1,\bm x) \to \overline M_{0,n+r}\times \mathbb P^1
\]
is a proper, toroidal morphism, inducing a subdivision of cone complexes
\[
\mathfrak{st}\times \mathfrak{ev}_{n+1}: \Sigma(\overline M_r(\PP^1,\bm x))\to \Sigma(\overline M_{0,n+r})\times \Sigma(\PP^1).
\]
Here, $\Sigma(\overline M_r(\PP^1,\bm x))$ and $\Sigma(\overline M_{0,n+r})\times \Sigma(\PP^1)$ can
by Proposition \ref{prop: ev-is-functorial} and \ref{thm-functorialtropm0n} be identified with $\overline M^{\trop}_r(\PP^1_{\trop},\bm x)$ resp.\ $\overline M^{\trop}_{n+r}\times \PP^1_{\trop}$. Furthermore, the morphism $\mathfrak{st}\times \mathfrak{ev}_{n+1}$ can by the generalization of Proposition \ref{prop-subdivision} be viewed as a stellar subdivision of simplicial fans $$\mathfrak{st}\times \mathfrak{ev}_{n+1}: \Delta_{\bm x,r}^{\parm}\to \Delta_{n+r}\times  \PP^1_{\trop}.$$
Identical arguments as the previous section now produces the following parametrized version of the main theorem.

\begin{theorem}\label{thm-tropical-compactification-parametrized}
There is a simplicial (noncomplete) toric variety $X(\Delta_{\bm x,r}^{\parm})$ with dense torus $T^{\parm}_{\bm x,r}$, and an embedding $M_r(\PP^1,\bm x)\hookrightarrow T^{\parm}_{\bm x,r}$, such that the closure of $M_r(\PP^1,\bm x)$ in $X(\Delta_{\bm x,r}^{\parm})$ is identified with the coarse moduli space $\overline M_r(\PP^1,\bm x)$. This compactification of $M_r(\PP^1,\bm x)$ is sch\"on. Moreover, the cone complex underlying $\Delta^{\parm}_{\bm x,r}$ is naturally identified with the tropical moduli space of maps to a parametrized target $\overline M_r^{\trop}(\PP^1_{\trop}, \bm x)$. 
\end{theorem}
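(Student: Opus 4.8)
The plan is to transcribe the proof of Theorem~\ref{thm: tropical-compactification} essentially verbatim, with the stabilization morphism $st$ replaced by the combined morphism $st\times ev_{n+1}$ and the space $\overline M_{0,n}$ replaced by $\overline M_{0,n+r}\times\PP^1$ (we work throughout with $r>0$, so that the mark $p_{n+1}$ is available). First I would use the Lemma above, which identifies the interior $M_r(\PP^1,\bm x)$ with $M_{0,n+r}\times\mathbb G_m$; composing with Kapranov's embedding $M_{0,n+r}\hookrightarrow T'$ into the torus of the Chow quotient recalled in the previous subsection, and the identity on the $\mathbb G_m$-factor, yields an embedding $M_r(\PP^1,\bm x)\hookrightarrow T'\times\mathbb G_m =: T^{\parm}_{\bm x,r}$. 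By Gibney--Maclagan~\cite{GM07} the tropicalization of $M_{0,n+r}$ in this embedding is the fan $\Delta_{n+r}\cong M^{\trop}_{0,n+r}$, while the tropicalization of $\mathbb G_m$ with its standard coordinate is $\RR$; hence $\trop(M_r(\PP^1,\bm x))$ is supported on $\Delta_{n+r}\times\PP^1_{\trop}$. As indicated after Proposition~\ref{prop-subdivision}, its proof adapts to show that $\mathfrak{st}\times\mathfrak{ev}_{n+1}$ exhibits $\Delta^{\parm}_{\bm x,r}$ as an iterated weighted stellar subdivision of $\Delta_{n+r}\times\PP^1_{\trop}$, which produces a proper birational toric morphism $X(\Delta^{\parm}_{\bm x,r})\to X(\Delta_{n+r})\times\PP^1$.

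Next I would let $Y$ denote the strict transform of $\overline M_{0,n+r}\times\PP^1$ in $X(\Delta^{\parm}_{\bm x,r})$, equivalently the closure of $M_r(\PP^1,\bm x)$ there; then $Y$ inherits a toroidal structure from the toric boundary and the induced map $Y\to\overline M_{0,n+r}\times\PP^1$ is a toroidal modification whose cone complex is $\Delta^{\parm}_{\bm x,r}$. To identify this cone complex with $\overline M^{\trop}_r(\PP^1_{\trop},\bm x)$ I would invoke Propositions~\ref{prop: ev-is-functorial} and~\ref{thm-functorialtropm0n}, together with the fact that the cone complex of a product of toroidal embeddings without self-intersection is the product of the cone complexes. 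Independently, $st\times ev_{n+1}:\overline M_r(\PP^1,\bm x)\to\overline M_{0,n+r}\times\PP^1$ is proper (both spaces are proper), birational (an isomorphism on interiors by the Lemma), and toroidal: $st$ is toroidal because it pulls back a node-deformation parameter of the stabilized source to the product of the deformation parameters of its preimages, exactly as in the proof of Proposition~\ref{prop-functorialtroprubber}, $ev_{n+1}$ is toroidal by Proposition~\ref{prop: ev-is-functorial}, and a product of toroidal morphisms is toroidal. The subdivision it induces on cone complexes is precisely $\mathfrak{st}\times\mathfrak{ev}_{n+1}$, i.e.\ the one defining $Y$, so by the uniqueness of the toroidal modification attached to a subdivision (\cite[Chapter II, Theorem 6]{KKMSD}, as applied in Proposition~\ref{prop:int}) I would conclude $\overline M_r(\PP^1,\bm x)\cong Y$. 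This yields both the identification of $\overline M_r(\PP^1,\bm x)$ with the closure of $M_r(\PP^1,\bm x)$ in $X(\Delta^{\parm}_{\bm x,r})$ and the identification of the underlying cone complex with $\overline M^{\trop}_r(\PP^1_{\trop},\bm x)$. Finally, since $\overline M_{0,n+r}$ is a sch\"on compactification of $M_{0,n+r}$ and $\PP^1$ is one of $\mathbb G_m$, the product $\overline M_{0,n+r}\times\PP^1$ is a sch\"on compactification of $M_r(\PP^1,\bm x)$, whence the tropical compactification $Y\cong\overline M_r(\PP^1,\bm x)$ is sch\"on by~\cite[Theorem 1.4]{Tev07}.

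The step I expect to be the main obstacle is the one that is only sketched in the rubber case: checking carefully that $st\times ev_{n+1}$ is toroidal on the compactified moduli spaces, i.e.\ that the local monomial descriptions of $st$ (a node parameter of the stabilized source pulls back to the product of the parameters of the nodes above it) and of $ev_{n+1}$ (the affine coordinate on $\PP^1$ pulls back to the product of the deformation parameters of the nodes of the expanded target lying between $0$ and the component carrying $p_{n+1}$, as computed in Proposition~\ref{prop: ev-is-functorial}) can be realized in a common system of local toric charts and are jointly dominant. A second point requiring a line of care is the identity $\Sigma(\overline M_{0,n+r}\times\PP^1)=\Sigma(\overline M_{0,n+r})\times\Sigma(\PP^1)$, valid because the two boundary divisors meet transversally, so that the product is again a toroidal embedding without self-intersection with product cone complex. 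With these in hand, the remainder is a mechanical transcription of the rubber argument and no genuinely new input is required.
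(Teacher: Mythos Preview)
Your proposal is correct and follows exactly the route the paper intends: the paper's own proof is the single sentence ``Identical arguments as the previous section now produces the following parametrized version of the main theorem,'' and what you have written is precisely those identical arguments spelled out, using the Lemma on $st\times ev_{n+1}$, Proposition~\ref{prop-subdivision}, Proposition~\ref{prop: ev-is-functorial}, and the toroidal modification machinery of~\cite{KKMSD} and~\cite{Tev07} in the same way as in the rubber case. The two technical points you flag (joint toroidality of $st\times ev_{n+1}$ and the product identification of cone complexes) are exactly the points the paper glosses over with ``by arguments in the preceding sections,'' so your caution there is well placed but no new idea is missing.
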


We have the following straightforward corollary.

\begin{corollary}
The evaluation map $ev_i: \overline M_r(\PP^1, \bm x)\to \PP^1$ extends to a morphism of toric varieties $\overline{ev}_i: X(\Delta^{\parm}_{\bm x,r})\to \PP^1$.
\end{corollary}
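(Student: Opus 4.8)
The plan is to use the standard criterion (\cite{Ful93}) that a homomorphism of tori $T\to T'$ extends to a morphism of toric varieties $X(\Delta)\to X(\Delta')$ precisely when the induced $\ZZ$-linear map of cocharacter lattices carries every cone of $\Delta$ into some cone of $\Delta'$. Concretely, I would first promote the restriction of $ev_i$ to the dense open $M_r(\PP^1,\bm x)$ to a homomorphism $\widehat{ev}_i\colon T^{\parm}_{\bm x,r}\to \mathbb G_m$ onto the dense torus of $\PP^1$, and then verify the cone condition against the fan $\{\,\RR_{\le 0},\ \{0\},\ \RR_{\ge 0}\,\}$ of $\PP^1$.

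For the first part I would exploit the structure exhibited in the proof of Theorem~\ref{thm-tropical-compactification-parametrized}: the torus $T^{\parm}_{\bm x,r}$ is naturally $T_{\mathrm{Kap}}\times\mathbb G_m$, where $T_{\mathrm{Kap}}$ is Kapranov's torus~\cite{Kap93} containing $M_{0,n+r}$, and under the isomorphism $st\times ev_{n+1}\colon M_r(\PP^1,\bm x)\xrightarrow{\sim} M_{0,n+r}\times\mathbb G_m$ the evaluation $ev_{n+1}$ is the coordinate on the second factor. For a non-relative mark $p_i$, the underlying cover over $(C,\lambda)$ is the rational function with divisor $\sum_j x_j p_j$ normalised to take the value $\lambda$ at $p_{n+1}$, so $ev_i=ev_{n+1}\cdot g_i$ with $g_i(C)=\varphi_C(p_i)/\varphi_C(p_{n+1})$ for any $\varphi_C$ having that divisor. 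Using $\sum_j x_j=0$ I would rewrite $g_i$ as a product of cross-ratios of quadruples of marked points; these are characters of $T_{\mathrm{Kap}}$, hence $g_i$ is too, and $\widehat{ev}_i:=\widehat{ev}_{n+1}\cdot(g_i\circ\mathrm{pr}_1)$ is a character of $T^{\parm}_{\bm x,r}$ restricting to $ev_i$. (If $p_i$ is a relative mark then $ev_i\equiv 0$ or $\equiv\infty$ and there is nothing to show.)

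For the cone condition I would invoke functoriality of tropicalisation: the linear functional on the cocharacter lattice attached to $\widehat{ev}_i$ restricts on $|\Delta^{\parm}_{\bm x,r}|=|\overline M^{\trop}_r(\PP^1_{\trop},\bm x)|$ to the tropicalisation of $ev_i$, which by Proposition~\ref{prop: ev-is-functorial} is the tropical evaluation morphism $\mathfrak{ev}_i$. Since $\mathfrak{ev}_i$ is a morphism of cone complexes with integral structure and (by Theorem~\ref{thm-tropical-compactification-parametrized}) the cones of $\Delta^{\parm}_{\bm x,r}$ are exactly those of $\overline M^{\trop}_r(\PP^1_{\trop},\bm x)$, each cone of $\Delta^{\parm}_{\bm x,r}$ lands in one of $\RR_{\le 0},\{0\},\RR_{\ge 0}$. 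The criterion then yields a toric morphism $\overline{ev}_i\colon X(\Delta^{\parm}_{\bm x,r})\to\PP^1$, and it agrees with $ev_i$ on all of $\overline M_r(\PP^1,\bm x)$ because both are morphisms to the separated scheme $\PP^1$ coinciding on the dense open $M_r(\PP^1,\bm x)$.

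I expect the only step with genuine content — everything else being bookkeeping — to be the identification of $ev_i$ with a monomial map whose tropicalisation is the already-constructed $\mathfrak{ev}_i$; equivalently, the assertion that the a priori merely piecewise-linear map $\mathfrak{ev}_i$ on the fan $\Delta^{\parm}_{\bm x,r}$ is the restriction of a single global linear functional. This is precisely where the cross-ratio description of Kapranov's embedding and the hypothesis $\sum x_i=0$ are needed.
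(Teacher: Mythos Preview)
Your proposal is correct and follows essentially the same two-step strategy as the paper: first extend $ev_i$ to a homomorphism of tori, then verify the cone condition via the fact that $\mathfrak{ev}_i$ is a morphism of fans. The only difference is in the first step: the paper simply invokes \cite[Proposition~3.6]{Gro14} to get the extension $T^{\parm}_{\bm x,r}\to\mathbb G_m$, whereas you carry this out by hand, writing $ev_i=ev_{n+1}\cdot g_i$ and expressing $g_i$ as a product of cross-ratios (hence a character of the Kapranov torus). Your route is more self-contained and makes explicit where the hypothesis $\sum x_j=0$ enters; the paper's is shorter but relies on an external reference for exactly this point.
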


\begin{proof}
It is shown in~\cite[Proposition 3.6]{Gro14} that the evaluation morphism, restricted to the open locus,
\[
ev_i:M_r(\PP^1, \bm x)\to  \mathbb G_m\subset \PP^1,
\]
can be extended to a morphism of tori $\overline{ev}_i: T^{\parm}_{\bm x,r}\to  \mathbb G_m$ (where $T^{\parm}_{\bm x,r}$ denotes the torus we embed $M_r(\PP^1, \bm x)$ into, as in Theorem \ref{thm-tropical-compactification-parametrized}). To check that this map extends to a morphism of the corresponding toric varieties, we must verify that the image of every cone of $\Delta^{\parm}_{\bm x, r}$ lies in a cone of $\Sigma(\PP^1)$. Since $\mathfrak{ev_i}$ is a morphism of fans the result follows immediately. 
\end{proof}

\section{Applications}

In this section, we complete the proofs of functorial tropicalization for Hurwitz cycles and the descendant correspondence.

\subsection{Hurwitz loci}\label{sec: hurwitz-loci}

We continue to fix ramification data $\bm x$ with $n$ entries, and denote by $r$ the number of simple branch points for a generic cover of special ramification type $\bm x$, i.e.\ $r=n-2$. We now construct the (classical) $k$-dimensional Hurwitz loci, following~\cite{BCM, GV05}.

Consider the space of relative stable maps to a parametrized target $\overline M_{r}(\PP^1,\bm x)$, together with its natural forgetful morphism (the stabilization morphism followed by forgetting the $r$ additional marked points)
\[
ft: \overline M_r(\PP^1,\bm x)\to \overline M_{0,n}. 
\]
Let $\hat \psi_i$ denote the first Chern class of the $i$th non-relative cotangent line bundle, and $ev_i$ the $i$th non-relative evaluation morphism. Then we may define the class
\[
\mathbb{H}_k(\bm x) = ft_\star\left(\prod_{i=1}^{r-k} \hat \psi_i ev_i^\star(pt) \prod_{j = r-k+1}^r \hat \psi_j \right).
\]

Intuitively, each factor of $\psi$ is equivalent to forcing a simple ramification on the $i$th marked point~\cite{OP06}. Pairing $\hat \psi_i$ with an evaluation pullback forces the image of this simple ramification point to be fixed. In particular, when $k = 0$, the degree of $\mathbb H_0(\bm x)$ recovers the Hurwitz number. The interested reader can find the details in~\cite[Lemma 3.2]{BCM}. 

On the tropical side, we make the analogous definitions. Let $\bm x$ be ramification data, and $r = n-2$. Consider the moduli space of tropical maps $\overline M^{\trop}_{r}(\PP^1,\bm x)$, equipped with the forgetful morphism (stabilization followed by forgetting the $r$ additional marked points)
\[
\mathfrak{ft}:\overline M^{\trop}_{r}(\PP^1,\bm x)\to\overline M_{0,n}^{\trop}.
\]
The tropical Hurwitz cycle depends on the choice of $r-k$ points $p_1,\ldots, p_{r-k}\in \RR\subset \PP^1_{\trop}$. We define
\[
\mathbb H_k^{\trop}(p_1,\ldots, p_{r-k};\bm x) = \mathbb H_k^{\trop}(\underline p;\bm x):=\mathfrak{ft}_\star\left( \prod_{i = 1}^{r-k} \hat \psi^{\trop} ev_i^\star(p_i)\cdot \prod_{j = r-k+1}^r \hat \psi_j^{\trop}\right)\subset \overline M_{0,n}^{\trop}.
\]

\subsection{Evaluations and $\psi$-classes} In order to study descendant invariants and higher dimensional Hurwitz loci, we need certain facts about $\psi$ classes on the space of maps, and about  cycles arising as inverse images via the evaluation morphisms. 

\begin{proposition}\label{prop: ev-fibers}
Let $p\in \mathbb R\subset \PP^1_{\trop}$. Then the fiber, $\mathfrak{ev}_i^{-1}(p)$, is a balanced integral polyhedral complex in $|\Delta^{\parm}_{\bm x, r}|$. The cycles $\mathfrak{ev}_i^{-1}(p)$ for different choices of $p\in\mathbb R$ are (tropically) rationally equivalent. 
\end{proposition}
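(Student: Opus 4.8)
The plan is to exploit the fact, established in Theorem~\ref{thm-tropical-compactification-parametrized} and the corollary following it, that the tropical evaluation morphism $\mathfrak{ev}_i$ is the restriction of a morphism of fans $\Delta^{\parm}_{\bm x,r}\to \PP^1_{\trop}$, or equivalently the tropicalization of a morphism of toric varieties $\overline{ev}_i:X(\Delta^{\parm}_{\bm x,r})\to \PP^1$. Concretely, there is a lattice map $\varphi_i: N \to \ZZ$ (where $N$ is the cocharacter lattice of $T^{\parm}_{\bm x,r}$) whose linear extension, restricted to each cone of $\Delta^{\parm}_{\bm x,r}$, computes $\mathfrak{ev}_i$. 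First I would observe that for $p\in\RR\subset \PP^1_{\trop}$ the fiber $\mathfrak{ev}_i^{-1}(p)$ is the preimage of a point in the open dense ray $\RR$ under this piecewise linear map, hence is the support of a polyhedral complex obtained by intersecting each maximal cone $\sigma$ of $\Delta^{\parm}_{\bm x,r}$ with the affine hyperplane $\{\varphi_i = p\}$; these intersections glue to a polyhedral complex since $\varphi_i$ is globally linear on $N_\RR$. One then needs to equip this complex with weights: the natural choice is to use the tropical intersection-theoretic pullback $\mathfrak{ev}_i^\star(p)$, i.e.\ the cross-cut construction of~\cite{FS97,AR}, which produces a balanced cycle of the expected codimension $1$ because $\mathfrak{ev}_i$ is a morphism of (weakly) balanced fans and a point on $\PP^1_{\trop}$ is a balanced $0$-cycle there. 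The integrality of the weights is inherited from the integrality of the lattice map $\varphi_i$.

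Second, for the rational equivalence statement, I would invoke the standard principle that the pullback of a point along a morphism to $\PP^1_{\trop}$ does not depend, up to rational equivalence, on the choice of point: on $\PP^1_{\trop}$ itself, the cycles $[p]$ and $[p']$ for $p,p'\in\RR$ differ by the divisor of a bounded (piecewise) linear function, namely an affine function whose value varies linearly as we move the point, and pulling back this function along $\mathfrak{ev}_i$ exhibits $\mathfrak{ev}_i^{-1}(p)-\mathfrak{ev}_i^{-1}(p')$ as a principal (rationally trivial) cycle on $|\Delta^{\parm}_{\bm x,r}|$. More precisely, one uses that $\mathfrak{ev}_i^\star$ commutes with taking divisors of rational functions (functoriality of the pullback in tropical intersection theory, \cite{AR, Kat09}), so $\mathfrak{ev}_i^\star[p] - \mathfrak{ev}_i^\star[p'] = \mathfrak{ev}_i^\star(\operatorname{div}(g)) = \operatorname{div}(g\circ \mathfrak{ev}_i)$ for the appropriate piecewise linear $g$ on $\PP^1_{\trop}$, which is by definition rationally trivial.

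I expect the main obstacle to be not the set-level description of the fiber, which is essentially formal, but rather the bookkeeping needed to verify balancing with the correct multiplicities directly on the subdivided fan $\Delta^{\parm}_{\bm x,r}$ — the cross-cut / pullback formalism gives balancing abstractly, but matching this with the ``naive'' fiber (weighting each facet by the lattice index of $\varphi_i$ restricted to that facet) requires checking that the stellar subdivisions introduced in the generalization of Proposition~\ref{prop-subdivision} interact correctly with $\mathfrak{ev}_i$, i.e.\ that $\mathfrak{ev}_i$ remains a morphism of fans after subdivision and that no new unbalanced loci are created along the newly introduced rays. A secondary technical point is ensuring $p$ lies in the relative interior $\RR$ (as opposed to $\pm\infty$) so that the fiber avoids the boundary strata of the extended cone complex and one may work with honest linear algebra on $N_\RR$; this is exactly the hypothesis $p\in\RR$ in the statement, so it suffices to remark that $\mathfrak{ev}_i^{-1}(\RR)$ is the union of the (non-compactified) cones of $\Delta^{\parm}_{\bm x,r}$ on which $\varphi_i$ is finite.
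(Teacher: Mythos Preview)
Your argument is correct, but it takes a genuinely different route from the paper's own proof. You work entirely within tropical intersection theory: you realize $\mathfrak{ev}_i^{-1}(p)$ as the Allermann--Rau pullback $\mathfrak{ev}_i^\star[p]$ of a point along a morphism of balanced fans, which is automatically balanced, and you obtain rational equivalence by pulling back the bounded piecewise linear function on $\PP^1_{\trop}$ whose divisor is $[p]-[p']$. The paper instead argues analytically: using the section $\PP^1_{\trop}\hookrightarrow\PP^1_{\an}$ and functoriality of tropicalization for $ev_i$ (Proposition~\ref{prop: ev-is-functorial}), it identifies $\mathfrak{ev}_i^{-1}(p)$ with $trop(ev_i^{-1}(x))$ for any $K$-point $x$ with $trop(x)=p$; balancing then comes for free because tropicalizations of subvarieties of tori are balanced, and rational equivalence is inherited from the classical rational equivalence of $ev_i^{-1}(x_1)$ and $ev_i^{-1}(x_2)$. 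Your approach is more self-contained and avoids the passage through Berkovich spaces, but the paper's approach buys something extra: it exhibits the tropical fiber as the honest tropicalization of a classical subvariety, which is exactly what is needed downstream in the proof of Theorem~\ref{thm: hurwitz-correspodnence}. Your worry about the bookkeeping of balancing across the stellar subdivisions is unnecessary in either framework---in yours because the AR formalism handles it abstractly, and in the paper's because balancing is a structural feature of any tropicalization.
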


\begin{proof}
Fix a point $p\in \RR\subset \PP^1_{\trop}$. Recall that there is a natural inclusion map $\PP^1_{\trop}\hookrightarrow \PP^1_{\an}$, and thus we may think of $p$ as a point of $\PP^1_{\an}$. That is, $p$ is identified by a map $\mathscr M(K)\to \PP^1_{\an}$, where $\mathscr M(K)$ is the Berkovich spectrum of a valued field extension of $\CC$. The analytic fiber over $p$ is then described by pullback, 
\[
ev_{\an,i}^{-1}(p) = \overline M^{\an}_r(\PP^1,\bm x)\times_{\PP^1_{\an}}\mathscr M(K).
\]
From Proposition~\ref{prop: ev-is-functorial}, the tropical fiber $\mathfrak{ev}_i^{-1}(p)$ coincides with the image under $trop$ of $ev_{\an,i}^{-1}(p)$. To show that $\mathfrak{ev_i}^{-1}(p)$ is a balanced polyhedral complex, we exhibit it as the tropicalization of a subvariety of a toric variety~\cite[Theorem 3.3.6]{MS14}. Choose a field $L$ extending $\mathbb C$, such that there exists an $L$-rational point $x\in \PP^1(K)$ such that $trop(x) = p$. It follows from the above discussion and~\cite[Proposition 6.1]{Pay09} that  $trop(ev_i^{-1}(x))$, is precisely $\mathfrak{ev}_i^{-1}(p)$. If $p_1,p_2\in \RR$, choose points $x_1$ and $x_2$ on $\PP^1$ such that $trop(x_j) = p_j$. The fibers $ev_i^{-1}(x_1)$ and $ev_i^{-1}(x_2)$ are easily seen to be rationally equivalent, and the result follows.
\end{proof}

The cycles $\hat \psi_i^{\trop}$ are defined to be the locus of maps in $\overline M^{\trop}_r(\PP^1,\bm x)$ having a $4$-valent vertex incident to the $i$th marked end. On the other hand, recall that classically $\hat \psi_i$ may be represented by the locus of curves that are simply ramified at the $i$th marked point~\cite{OP06}. 

\begin{proposition}\label{prop: psi-cycles}
The locus $\hat \psi_i^{\trop}$ is the image under $trop$ of the cycle $Z_i\subset \Mbar^{\an}_r(\PP^1,\bm x)$ parametrizing stable maps which are simply ramified at the $i$th marked point.
\end{proposition}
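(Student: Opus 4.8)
\emph{Plan.} The plan is to compute $trop(Z_i)$ directly and match it with the combinatorial description of $\hat\psi_i^{\trop}$. By Proposition~\ref{prop: ev-is-functorial} the map $trop$ on $\overline M^{\an}_r(\PP^1,\bm x)$ is the continuous projection to the Thuillier skeleton, identified by Theorem~\ref{thm-tropical-compactification-parametrized} with $\overline M^{\trop}_r(\PP^1_{\trop},\bm x)$, the cone complex underlying $\Delta^{\parm}_{\bm x,r}$. Since $Z_i$ is the closure of its intersection with the open stratum $M_r(\PP^1,\bm x)$, continuity gives $trop(Z_i)=\overline{trop\big(Z_i\cap M_r(\PP^1,\bm x)\big)}$. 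Using the isomorphism $\mathrm{st}\times\mathrm{ev}_{n+1}\colon M_r(\PP^1,\bm x)\xrightarrow{\ \sim\ }M_{0,n+r}\times\mathbb G_m$, this intersection becomes $D_i\times\mathbb G_m$, where $D_i\subset M_{0,n+r}$ is the divisor of pointed rational curves whose associated degree-$\bm x$ cover $f$ (determined by $\operatorname{div}(f)=\sum_{j\le n}x_jp_j$ up to scaling) is ramified at the non-relative mark $p_i$. Thus it suffices to show that $\overline{trop(D_i)\times\RR}$, with closure taken in $\overline M^{\trop}_r(\PP^1_\trop,\bm x)$, is the union of cones $\sigma_\Theta$ of $\Delta^{\parm}_{\bm x,r}$ whose combinatorial type $\Theta$ has the $i$th end incident to a vertex of valence at least $4$. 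This target locus is a genuine subcomplex, since contracting a finite edge can only increase the valence of the $p_i$-vertex.

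For the inclusion $trop(Z_i)\subseteq\hat\psi_i^{\trop}$, represent a point of $Z_i$ by a family $[h\colon C\to T]$ over $\spec R$ (after a finite base change, using properness) whose smooth generic fibre is simply ramified at the section $p_i$, and tropicalize it to the dual-graph map $[\Gamma(C)\to\Gamma(T)]$. Let $C_v$ be the component of the special fibre meeting the section $p_i$ and let $e$ be the ramification order of $h$ along this section; since the section is disjoint from the rest of the boundary and lands in the smooth locus of $T$, the local model $z\mapsto z^e$ along the section deforms and $e=2$, so $C_v$ carries a simple interior ramification at $p_i$. A Riemann--Hurwitz count on the genus-$0$ component $C_v$, whose remaining ramification is forced over its nodes and over any incident relative marks, shows that $C_v$ carries exactly three further special points (nodes together with relative marks lying over $0$ or $\infty$); hence $v$ has valence $4$, and the neighbouring expansion factors satisfy a balancing relation. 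Therefore $\Theta\in\hat\psi_i^{\trop}$. The boundary behaviours in which $p_i$ lies on a component contracted by $h$ or by $\mathrm{st}$ are specializations of such types and again lie in the subcomplex $\hat\psi_i^{\trop}$; alternatively, one may organize these cases through the branch morphism and the comparison with admissible covers recalled earlier, under which simple ramification at $p_i$ corresponds to the $i$th branch point colliding with other branch data.

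For the reverse inclusion it suffices, since both sides are unions of cones of $\Delta^{\parm}_{\bm x,r}$, to produce for each maximal type $\Theta$ with a $4$-valent vertex at $p_i$ a point of $Z_i$ tropicalizing into the relative interior of $\sigma_\Theta$. One builds a predeformable family over $\spec\CC[[t]]$ whose special fibre is a nodal relative stable map realizing $\Theta$, with the $p_i$-component a degree-$2$ cover simply ramified at $p_i$ and at one of its nodes, and smooths it with smoothing orders on the nodes chosen to read off a prescribed interior point of $\sigma_\Theta$. The generic fibre is a smooth cover still simply ramified at $p_i$ (the ramification order along the marked section is unchanged), hence a point of $Z_i$, whose dual-graph tropicalization is the prescribed point. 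By $\RR_{>0}$-invariance and closedness of $trop(Z_i)$ within each cone, $\sigma_\Theta\subseteq trop(Z_i)$, completing the identification.

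\emph{Main obstacle.} The forward direction is the crux: controlling every degeneration of a cover carrying a marked simple ramification, in particular the cases where $p_i$ slides onto a component contracted by $h$ or by $\mathrm{st}$, and verifying that the ramification order along the marked section behaves as claimed. A secondary point to check is that $Z_i$ meets the toric boundary of $X(\Delta^{\parm}_{\bm x,r})$ properly, so that $trop(Z_i)$ is exactly a union of cones of this fan rather than merely supported on a subdivision of one; this is expected because the fan $\Delta^{\parm}_{\bm x,r}$ is assembled from precisely the ramification combinatorics, but it merits verification.
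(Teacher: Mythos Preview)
Your proposal follows the same approach as the paper's proof, which is extremely terse (``essentially an unraveling of definitions\ldots one easily observes''); you supply the Riemann--Hurwitz count for the forward inclusion and the explicit family construction for surjectivity that the paper leaves implicit.

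Two small corrections are worth making. In the forward direction, the Riemann--Hurwitz count on $C_v\to T_w$ gives $N_0+N_\infty = 2 + (\text{interior ramification on }C_v)$, where $N_0,N_\infty$ are the numbers of preimages of the two boundary points of $T_w$; so you obtain \emph{at least} three further special points and valence $\geq 4$, not exactly four. Extra interior ramification, or additional non-relative marks landing on $C_v$, only push the valence higher, and the ramification order along the section $p_i$ is upper-semicontinuous, so ``$e=2$'' should read ``$e\geq 2$''. In the reverse direction, for a maximal $4$-valent type $\Theta$ the component $C_v$ is not in general a degree-$2$ cover ``simply ramified at $p_i$ and at one of its nodes'': its degree over $T_w$ and its ramification over the two boundary points are dictated by the three non-contracted expansion factors at $v$ in $\Theta$. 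The same Riemann--Hurwitz identity then forces exactly one remaining interior simple ramification point, which you place at $p_i$; this is the map you smooth. With these fixes the argument goes through, and your secondary concern about proper intersection with the toric boundary is resolved a posteriori, since $\hat\psi_i^{\trop}$ is manifestly a subfan of $\Delta^{\parm}_{\bm x,r}$.
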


\begin{proof}
The proof is essentially an unraveling of definitions. The inclusion $Z_i\hookrightarrow \Mbar_r(\PP^1,\bm x)$ produces an analytic map $Z_i^{\an}\hookrightarrow \Mbar^{\an}_r(\PP^1,\bm x)$ which may be described as follows. As we have seen previously, a point of $Z_i^{\an}$ is represented by a family of relative stable maps $[C\to T]$ over a germ of a curve $Spec(R)$, such that the generic fiber is simply ramified at the $i$th marked point. Composing the inclusion of $Z_i^{\an}$ with the tropicalization map, one easily observes that $trop(Z_i)$ coincides with the locus of covers having a $4$-valent vertex incident to the $i$th marked point. This is precisely the tropical cycle $\hat \psi_i$. 
\end{proof}

\begin{corollary}
The locus $\hat \psi_i^{\trop}$ is a balanced fan, with all weights equal to $1$.
\end{corollary}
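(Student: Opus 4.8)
The plan is to bootstrap everything from Proposition~\ref{prop: psi-cycles}, which identifies $\hat\psi_i^{\trop}$ with $trop(Z_i)$, the tropicalization of the simple ramification locus $Z_i\subset\overline M_r(\PP^1,\bm x)$. Write $Z_i^\circ = Z_i\cap M_r(\PP^1,\bm x) = Z_i\cap T^{\parm}_{\bm x,r}$, a codimension-one subvariety of the torus $T^{\parm}_{\bm x,r}$ over the trivially valued field $\CC$. By the structure theorem for tropicalizations recalled in Section~\ref{sec-prelim} (see~\cite{MS14}), $trop(Z_i^\circ)$ carries the structure of a balanced weighted fan, pure of codimension one; and since by Proposition~\ref{prop: psi-cycles} its support is the union of codimension-one cones of $\Delta^{\parm}_{\bm x,r}$ comprising $\hat\psi_i^{\trop}$, this already yields the balanced fan structure. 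The remaining content is to show that every maximal cone carries weight one.

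Fix such a maximal cone $\sigma$ --- a codimension-one cone of $\Delta^{\parm}_{\bm x,r}$ --- and a vector $w$ in its relative interior. Since $trop(Z_i^\circ)$ is supported on $\Delta^{\parm}_{\bm x,r}$, the divisor $Z_i$ meets the one-dimensional torus orbit $O(\sigma)\subset X(\Delta^{\parm}_{\bm x,r})$ properly, and the weight $m_\sigma$ is the length of the finite scheme $Z_i\cap O(\sigma)$, which up to a torus factor is the initial degeneration $\mathrm{in}_w(Z_i^\circ)$. Under the identification of Theorem~\ref{thm-tropical-compactification-parametrized}, $O(\sigma)$ is the stratum of maps of combinatorial type $\Theta_\sigma$; as $\sigma$ lies in $\hat\psi_i^{\trop}$, the type $\Theta_\sigma$ has trivalent source stabilization except for one four-valent vertex $v$ carrying the $i$th (contracted) end, and the stratum is the one-parameter family moving $p_i$ along the distinguished component $C_0$. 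Because the target graph is a line graph, its vertices are at most $2$-valent, so the three non-leg edges at $v$ split two-against-one; balancing at $v$ then forces the restricted map $C_0\to\PP^1$ to be totally ramified over one of its two nodal directions. A Riemann--Hurwitz count, together with the fact that a genus-zero cover possessing a fully ramified branch point is unique --- the same circle of ideas that makes the expansion factors of $\Theta_\sigma$ determined by $\bm x$~\cite{BCM} --- then shows that $C_0\to\PP^1$ has exactly one further ramification point. Hence $Z_i$ meets the stratum $O(\sigma)$ in a single point, and $m_\sigma$ is the multiplicity of that point.

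What remains, and is the main obstacle, is transversality: one must show that the scheme $Z_i\cap O(\sigma)$ is reduced, i.e. that the local equation ``$\mathrm{d}\phi$ vanishes at $p_i$'' of $Z_i$ has reduced initial form along every toric boundary stratum. The natural approach is a smoothing computation in coordinates on $\overline M_r(\PP^1,\bm x)$ adapted to the stratum $V(\sigma)$, showing this equation takes the shape $c\cdot\xi + (\text{higher order})$ with $c$ a unit on the stratum; equivalently, one exhibits $Z_i$ as the vanishing locus of a section of a cotangent-type line bundle which vanishes to order exactly one along $Z_i$, extending the interior rank computations of~\cite{OP06, Vak08} across the boundary. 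A convenient packaging is that the coordinate ring of $T^{\parm}_{\bm x,r}$ is a unique factorization domain, so the radical ideal of $Z_i^\circ$ is principal, generated by a Laurent polynomial $g_i$, and then $m_\sigma$ equals the lattice length of the edge of the Newton polytope of $g_i$ selected by $w$; the same local analysis shows this length is one. One might instead hope to identify $\hat\psi_i^{\trop}$ set-theoretically with the preimage of the tropical $\psi$-class on $\overline M^{\trop}_{0,n+r}$ under the subdivision $\mathfrak{st}$, but since $\hat\psi_i$ differs from $st^\star\psi_i$ by boundary corrections the matching of weights is not formal and would require the same transversality input.
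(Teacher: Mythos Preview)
The paper offers no proof of this corollary; it is stated immediately after Proposition~\ref{prop: psi-cycles} and left to the reader. Your attempt is therefore considerably more detailed than anything in the paper.

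That said, your argument contains a gap you yourself flag: the transversality of $Z_i$ with the one-dimensional torus orbits $O(\sigma)$ is never established. You sketch three possible approaches---a smoothing computation in adapted coordinates, a cotangent-bundle section argument, and a Newton-polytope edge-length count---but complete none of them. Saying ``the natural approach is\ldots'' or ``a convenient packaging is\ldots'' is not a proof; without this step the weight-one claim remains open in your write-up.

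You also dismiss too quickly the route via the subdivision map. The locus $\hat\psi_i^{\trop}$ is \emph{defined} in the paper as the set of combinatorial types in $\Delta^{\parm}_{\bm x,r}$ whose $i$th (contracted) end meets a vertex of valence at least four. Since $\mathfrak{st}\times\mathfrak{ev}_{n+1}$ only forgets two-valent vertices, and the $i$th end never sits at one of those, this locus is precisely the preimage of $\psi_i^{\trop}\times\PP^1_{\trop}\subset\Delta_{n+r}\times\PP^1_{\trop}$ under the subdivision of Proposition~\ref{prop-subdivision}. The latter is balanced with all weights one (a product of the standard tropical $\psi$-class of~\cite{KM09,Mi07} with a line), and refinement preserves both balancing and weights. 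This yields the corollary directly and combinatorially. Your objection---that $\hat\psi_i$ differs from $st^\star\psi_i$ by boundary corrections---is a statement about Chow classes on the algebraic space and does not obstruct the purely combinatorial balancing of the tropical locus. What it \emph{would} obstruct is the stronger claim that $trop(Z_i)$ agrees with $\hat\psi_i^{\trop}$ as \emph{weighted} cycles; if that is how you read the corollary, then you are right that it requires exactly the transversality input you were unable to supply, and the paper does not supply it either.
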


\subsection{Proof of Theorem~\ref{thm: descendant-correspondence}} Recall that our moduli space $\overline M_r(\PP^1, \bm x)$ is a tropical compactification in the simplicial toric variety $X(\Delta^{\parm}_{\bm x,r})$. We can now find an open embedding $X(\Delta^{\parm}_{\bm x,r})$ into a complete simplicial toric variety $X(\widehat \Delta^{\parm}_{\bm x,r})$. Construct $\widehat \Delta^{\parm}_{\bm x,r}$ as follows. The tropical moduli space $\overline M^{\trop}_{0,n+r}$ is a collection of cones in the fan $\mathcal F$ of the Chow quotient $\PP^{\binom{n+r}{2}}/\!/T$, as described in Section~\ref{sec: trop-compactification-rsm}. Subdivide $\mathcal F\times \PP^1_{\trop}$ according to the subdivision $\mathfrak{st}\times \mathfrak{ev}_{n+1}$, producing a fan $\widetilde \Delta^{\parm}_{\bm x,r}$. Now, by the toric resolution of singularities algorithm, we subdivide to obtain a fan $\widehat \Delta^{\parm}_{\bm x,r}$, unimodular away from the cones of $\Delta_{\bm x,r}^{\parm}$. We thus obtain the desired embedding of $\overline M_r(\PP^1, \bm x)$ into a complete simplicial toric variety $X(\widehat \Delta^{\parm}_{\bm x,r})$. 

We may treat $\hat \psi_i$ and $ev_j^\star(pt)$ as elements of $A^1(X(\widehat \Delta^{\parm}_{\bm x,r}))$. As explained in~\cite[Section 7]{Kat09}, we may lift each of these elements to piecewise linear functions, i.e. elements of $A^1_T(X(\widehat \Delta^{\parm}_{\bm x,r}))$. These can now be extended, as  piecewise linear functions, to the complete toric variety $\widehat \Delta^{\parm}_{\bm x,r}$. To lift $ev_i^\star(pt)$, we simply choose any piecewise linear function (tropical Cartier divisor) on the fan of $\PP^1$ which bends with multiplicity $1$ at precisely the chosen point $p_i\in \RR\subset \PP^1_{\trop}$. Pulling this function back to $\Delta^{\parm}_{\bm x,r}$, we obtain the desired equivariant lifting of $ev_i^\star(pt)$. For the classes $\hat \psi_i$, we use the standard lift of $\psi_i$ described in~\cite{Kat09, KM09}. The descendant correspondence now follows from~\cite[Theorem 6.3]{Kat09}. \qed

\begin{example}
 In this example, we demonstrate how we can use Theorem~\ref{thm: descendant-correspondence} to compute relative descendant Gromov-Witten invariants of $\PP^1$, by computing the tropical relative stable maps  underlying the corresponding cycle in $A^1(X(\widehat \Delta^{\parm}_{\bm x,r}))$, with the multiplicity with which they appear in this cycle. 

Let us compute $\langle \tau_{2}(1_{\PP^1_{\trop}}),\tau_{1}(pt), \tau_{0}(pt)\rangle $ for the ramification data $\bm x=(-6,3,2,1)$, which we obtain by summing the multiplicities of the maps underlying the intersection $$(\hat \psi_5^{\trop})^2 \hat \psi_6^{\trop}\mathfrak{ev}_6^{\ast}(0)\mathfrak{ev}_7^{\ast}(1)$$ in $\overline M_3^{\trop}(\PP^1_{\trop},\bm x)$. 
Let us analyze these maps. The marked ends $6$ and $7$ are required to map to $0$ and $1$ in $\PP^1_{\trop}$, respectively. There is no condition where the marked end $5$ should map to, but since the intersection product is zero-dimensional, it has to be adjacent to one of the two vertices of the stabilization of the source curve in $\overline M^{\trop}_{0,7}$. 
Thus, end $5$ can either be adjacent to end $6$ and increase the valency of this vertex to $6=3+1+2 $, or it can be adjacent to $7$ and increase the valency to $5=3+0+2$. Remember by~\cite{KM09} the valence of a vertex adjacent to ends with markings $i_1,\ldots,i_r$ equals $3+k_{i_1}+\ldots+k_{i_r}$, where $k_{j}$ denotes the power of the Psi-class $ \hat \psi_j^{\trop}$ in the above intersection.
Figure~\ref{gw-example} shows the possible tropical stable maps that contribute to the computation. The multiplicities are computed according to~\cite{KM09} to be $\prod \frac{1}{k_i!}\prod_V (\sum_{i \mbox{ at } V} k_i)!$, where the second product goes over all vertices $V$ and the sum over the markings $i$ adjacent to $V$. Altogether, we obtain $\langle \tau_{2}(1_{\PP^1_{\trop}}),\tau_{1}(pt), \tau_{0}(pt)\rangle=12$.

\begin{figure}[h!]
\subfigure{
\begin{tikzpicture}
[scale=1.7]
\draw (-2,1)--(-0.5,1);
\draw (-0.5,1)--(1.25,1.5);
\draw (-0.5,1)--(1.25,1);
\draw (-0.5,1)--(1.25,0.5);
\draw[dashed] (-0.5,1)--(-0.7,1.5);
\draw[dashed] (-0.5,1)--(-0.3,1.5);
\draw[dashed] (0.375,1.25)--(.375,1.75);

\draw (-2,0)--(1.25,0);

\draw[ball color=black] (-0.5,0) circle (0.2mm);
\draw[ball color=black] (0.375,0) circle (0.2mm);

\node at (-2.2,1) {\tiny \boxed{$1$}};
\node at (1.45,1.5) {\tiny \boxed{$2$}};
\node at (1.45,1) {\tiny \boxed{$3$}};
\node at (1.45,.5) {\tiny \boxed{$4$}};

\node at (-0.7,1.7) {\tiny \boxed{$5$}};
\node at (-0.3,1.7) {\tiny \boxed{$6$}};
\node at (0.375,1.95) {\tiny \boxed{$7$}};

\node at (-1.5,1.1) {\tiny $6$};
\node at (0.65,1.45) {\tiny $3$};
\node at (0.65,1.1) {\tiny $2$};
\node at (0.65,.775) {\tiny $1$};

\end{tikzpicture}
}
\subfigure{
\begin{tikzpicture}
[scale=1.7]
\draw (-2,1)--(-0.5,1);
\draw (-.5,1)--(0.75,1.25);
\draw (-.5,1)--(1.25,0.5);
\draw (0.75,1.25)--(1.25,1.5);
\draw (0.75,1.25)--(1.25,1);

\draw[dashed] (-0.5,1)--(-0.5,1.5);
\draw[dashed] (0.75,1.25)--(.95,1.75);
\draw[dashed] (0.75,1.25)--(.55,1.75);

\draw (-2,0)--(1.25,0);

\draw[ball color=black] (-0.5,0) circle (0.2mm);
\draw[ball color=black] (0.75,0) circle (0.2mm);

\node at (-1.5,1.1) {\tiny $6$};
\node at (.25,1.25) {\tiny $5$};
\node at (.25,.7) {\tiny $1$};
\node at (1.15,1.55) {\tiny $3$};
\node at (1.15,.95) {\tiny $2$};

\node at (-2.2,1) {\tiny \boxed{$1$}};
\node at (1.45,1.5) {\tiny \boxed{$2$}};
\node at (1.45,1) {\tiny \boxed{$3$}};
\node at (1.45,.5) {\tiny \boxed{$4$}};

\node at (-0.5,1.7) {\tiny \boxed{$6$}};
\node at (0.525,1.95) {\tiny \boxed{$5$}};
\node at (1,1.95) {\tiny \boxed{$7$}};
\end{tikzpicture}
}
\subfigure{
\begin{tikzpicture}
[scale=1.7]
\draw (-2,1)--(-0.5,1);
\draw (-0.5,1)--(1.25,1.5);
\draw (-0.5,1)--(1.25,1);
\draw (-0.5,1)--(1.25,0.5);
\draw[dashed] (-0.5,1)--(-0.7,1.5);
\draw[dashed] (-0.5,1)--(-0.3,1.5);
\draw[dashed] (0.375,1.25)--(.375,1.75);

\draw (-2,0)--(1.25,0);

\draw[ball color=black] (-0.5,0) circle (0.2mm);
\draw[ball color=black] (0.375,0) circle (0.2mm);

\node at (-2.2,1) {\tiny \boxed{$1$}};
\node at (1.45,1.5) {\tiny \boxed{$3$}};
\node at (1.45,1) {\tiny \boxed{$2$}};
\node at (1.45,.5) {\tiny \boxed{$4$}};

\node at (-0.7,1.7) {\tiny \boxed{$5$}};
\node at (-0.3,1.7) {\tiny \boxed{$6$}};
\node at (0.375,1.95) {\tiny \boxed{$7$}};

\node at (-1.5,1.1) {\tiny $6$};
\node at (0.65,1.45) {\tiny $2$};
\node at (0.65,1.1) {\tiny $3$};
\node at (0.65,.775) {\tiny $1$};
\end{tikzpicture}
}
\subfigure{
\begin{tikzpicture}
[scale=1.7]
\draw (-2,1)--(-0.5,1);
\draw (-.5,1)--(0.75,1.25);
\draw (-.5,1)--(1.25,0.5);
\draw (0.75,1.25)--(1.25,1.5);
\draw (0.75,1.25)--(1.25,1);

\draw[dashed] (-0.5,1)--(-0.5,1.5);
\draw[dashed] (0.75,1.25)--(.95,1.75);
\draw[dashed] (0.75,1.25)--(.55,1.75);

\draw (-2,0)--(1.25,0);

\draw[ball color=black] (-0.5,0) circle (0.2mm);
\draw[ball color=black] (0.75,0) circle (0.2mm);

\node at (-1.5,1.1) {\tiny $6$};
\node at (.25,1.25) {\tiny $5$};
\node at (.25,.7) {\tiny $2$};
\node at (1.15,1.55) {\tiny $3$};
\node at (1.15,.95) {\tiny $1$};

\node at (-2.2,1) {\tiny \boxed{$1$}};
\node at (1.45,1.5) {\tiny \boxed{$2$}};
\node at (1.45,1) {\tiny \boxed{$4$}};
\node at (1.45,.5) {\tiny \boxed{$3$}};

\node at (-0.5,1.7) {\tiny \boxed{$6$}};
\node at (0.525,1.95) {\tiny \boxed{$5$}};
\node at (1,1.95) {\tiny \boxed{$7$}};
\end{tikzpicture}

}

\subfigure{
\begin{tikzpicture}
[scale=1.7]
\draw (-2,1)--(-0.5,1);
\draw (-0.5,1)--(1.25,1.5);
\draw (-0.5,1)--(1.25,1);
\draw (-0.5,1)--(1.25,0.5);
\draw[dashed] (-0.5,1)--(-0.7,1.5);
\draw[dashed] (-0.5,1)--(-0.3,1.5);
\draw[dashed] (0.375,1.25)--(.375,1.75);

\draw (-2,0)--(1.25,0);

\draw[ball color=black] (-0.5,0) circle (0.2mm);
\draw[ball color=black] (0.375,0) circle (0.2mm);

\node at (-2.2,1) {\tiny \boxed{$1$}};
\node at (1.45,1.5) {\tiny \boxed{$4$}};
\node at (1.45,1) {\tiny \boxed{$2$}};
\node at (1.45,.5) {\tiny \boxed{$3$}};

\node at (-0.7,1.7) {\tiny \boxed{$5$}};
\node at (-0.3,1.7) {\tiny \boxed{$6$}};
\node at (0.375,1.95) {\tiny \boxed{$7$}};

\node at (-1.5,1.1) {\tiny $6$};
\node at (0.65,1.45) {\tiny $1$};
\node at (0.65,1.1) {\tiny $3$};
\node at (0.65,.775) {\tiny $2$};
\end{tikzpicture}
}
\subfigure{
\begin{tikzpicture}
[scale=1.7]
\draw (-2,1)--(-0.5,1);
\draw (-.5,1)--(0.75,1.25);
\draw (-.5,1)--(1.25,0.5);
\draw (0.75,1.25)--(1.25,1.5);
\draw (0.75,1.25)--(1.25,1);

\draw[dashed] (-0.5,1)--(-0.5,1.5);
\draw[dashed] (0.75,1.25)--(.95,1.75);
\draw[dashed] (0.75,1.25)--(.55,1.75);

\draw (-2,0)--(1.25,0);

\draw[ball color=black] (-0.5,0) circle (0.2mm);
\draw[ball color=black] (0.75,0) circle (0.2mm);

\node at (-1.5,1.1) {\tiny $6$};
\node at (.25,1.25) {\tiny $5$};
\node at (.25,.7) {\tiny $3$};
\node at (1.15,1.55) {\tiny $2$};
\node at (1.15,.95) {\tiny $1$};

\node at (-2.2,1) {\tiny \boxed{$1$}};
\node at (1.45,1.5) {\tiny \boxed{$3$}};
\node at (1.45,1) {\tiny \boxed{$4$}};
\node at (1.45,.5) {\tiny \boxed{$2$}};

\node at (-0.5,1.7) {\tiny \boxed{$6$}};
\node at (0.525,1.95) {\tiny \boxed{$5$}};
\node at (1,1.95) {\tiny \boxed{$7$}};
\end{tikzpicture}
}
\caption{The combinatorial types contributing to the Gromov--Witten invariant $\langle \tau_2(\PP^1_{\trop}),\tau_1(pt),\tau_0(pt) \rangle$. The types on the left each have multiplicity $3$, while the types on the right each have multiplicity $1$.}
\label{gw-example}
\end{figure}
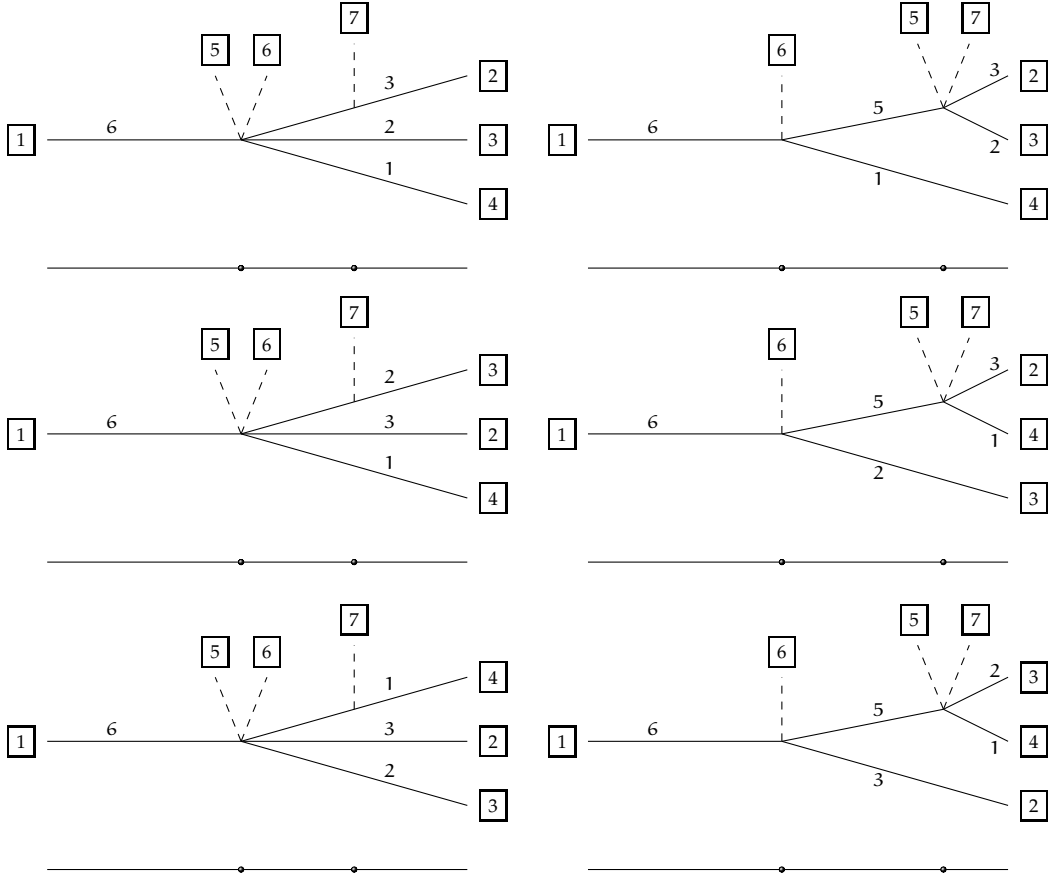
\end{example}

The example can easily be generalized to an algorithm to compute tropical relative descendant invariants of $\PP^1$.

\subsection{Proof of Theorem~\ref{thm: hurwitz-correspodnence}} Let $r = n-2$ denote the number of simple branch points on a generic cover of $\PP^1$ with ramification conditions given by $\bm x$. Fix a collection of (tropical) branch points  $\underline p = (p_1,\ldots, p_{r-k})\in (\RR)^{r-k}$. Consider the tropical Hurwitz locus $\mathbb H_k^{\trop}(\underline p;\bm x)$. Let $K$ be an algebraically closed valued field, extending the ground field $\CC$, such that the valuation $val:K^*\to \RR$ is surjective. Since tropicalization is invariant under field extension~\cite[Appendix]{Pay09}, we may base change to $\PP^1(K)$ and $\overline M_r(\PP^1,\bm x)(K)$, to obtain surjective tropicalization maps to $\PP^1_{\trop}$ and $\overline M^{\trop}_r(\PP^1,\bm x)$ respectively. Now, choose a collection of points $\underline y = (y_1,\ldots, y_{r-k})\in (\mathbb G_m(K))^{r-k}$ such that $trop(\underline y) = \underline p$.  Fix evaluation pullbacks $ev_i^{-1}(y_i)$, as subvarieties of the $K$-variety $\overline M_r(\PP^1,\bm x)(K)$. It follows from Proposition~\ref{prop: ev-fibers}, that $\mathfrak{ev}_i^{-1}(p_i)$ is the tropicalization of $ev_i^{-1}(y_i)$. Similarly, the class $\hat \psi_i$ is represented by the locus of stable maps over $K$ that are simply ramified at the $i$th marked point. Proposition~\ref{prop: psi-cycles} ensures that the class $\hat \psi_i^{\trop}$ is the tropicalization of this representative of $\hat \psi_i$ on the space of maps. We define the locus $\widetilde H_k(\underline y;\bm x)\subset \overline M_r(\PP^1,\bm x)(K)$, to be the the locus of maps where the $i$th marked point maps to the $p_i$ for $1\leq i \leq r-k$, and the stable map is simply ramified at all non-relative markings. The image of this locus in $M_{0,n}(K)$ yields a subvariety $H_k(\underline y;\bm x)$, such that $trop(H_k(\underline y;\bm x)) = \mathbb H_k^{\trop}(\underline p;\bm x)$. \qed
\newpage

\section*{Index of notation}

\begin{table}[h!]
\begin{tabular}{p{25mm}p{87mm}p{38mm}}
\hline
$X(\Delta)$ & the toric variety associated to a fan $\Delta$ & Section \ref{sec-prelim} \\
$X^{\an}$ & the Berkovich analytification of a variety $X$& Section \ref{sec-prelim}\\
$\Sigma(X)$& the Thuillier skeleton of a variety $X$ &  Section \ref{sec-prelim}\\
$\bm x$ & the ramification data above $0$ and $\infty$ --- a tuple of integers with $n$ entries, summing up to zero & Page \pageref{x}\\
$\Mbar_r(\PP^1,\bm x)$& the space of relative stable maps with ramification $\bm x$ and $r$ additional marked points to a parametrized target & Subsection \ref{subsec-classicalparam} and \ref{subsec-classicalmoduli}\\
$\Mbar_r(\bm x)$& the space of relative stable maps with ramification $\bm x$ and $r$ additional marked points to a rubber target & Subsection \ref{subsec-classicalrubber} and \ref{subsec-classicalmoduli}\\
$\Mbar(\PP^1,\bm x)$& the space of relative stable maps with ramification $\bm x$ (no additional marked points) to a parametrized target & Subsection \ref{subsec-classicalmoduli}\\
$\Mbar(\bm x)$& the space of relative stable maps with ramification $\bm x$ (no additional marked points) to a rubber target & Subsection \ref{subsec-classicalmoduli}\\
$ev_i$ & the evaluation morphism (for parametrized target, and for one of the $r$ additional marked points) & Page \pageref{tautologicalmorphisms}\\
$st$ & the stabilization morphism (the source map) & Page \pageref{tautologicalmorphisms} \\
$br$ & the branch map & Page \pageref{tautologicalmorphisms}\\
$\overline M_r^{\trop}(\PP^1_{\trop},\bm x)$& the space of tropical relative stable maps with ramification $\bm x$ and $r$ additional marked points to a parametrized target & Definition \ref{def-trop-parametrized}, Remark \ref{rem-trop-parametrized} and Subsection \ref{subsec-tropicalmoduli}\\
$\overline M_r^{\trop}(\bm x)$& the space of tropical relative stable maps with ramification $\bm x$ and $r$ additional marked points to a rubber target & Definition \ref{def-trop-rubber} and Subsection \ref{subsec-tropicalmoduli}\\
$\overline M^{\trop}(\PP^1_{\trop},\bm x)$& the space of tropical relative stable maps with ramification $\bm x$ (no additional marked points) to a parametrized target & Subsection \ref{subsec-tropicalmoduli}\\
$\overline M^{\trop}(\bm x)$& the space of tropical relative stable maps with ramification $\bm x$ (no additional marked points) to a rubber target & Subsection \ref{subsec-tropicalmoduli}\\
$\mathfrak{ev}_i$ & the tropical evaluation morphism (for parametrized target, and for one of the $r$ additional marked points) & Subsection \ref{subsec-tropicalmoduli}\\
$\mathfrak{st}$ & the tropical stabilization morphism (source map) & Subsection \ref{subsec-tropicalmoduli}\\
$\mathfrak{br}$ & the tropical branch map & Subsection \ref{subsec-tropicalmoduli}\\
$\Delta_n$ & the fan for $\overline M^{\trop}_{0,n}$& Page \pageref{deltan}\\
$\Delta^{\rub}_{\bm x (,r)}$& the subdivision of $\Delta_n$ induced by the stabilization morphism from the space of tropical relative stable maps to a rubber target, with ramification data $\bm x$ (and additional $r$ marked points) & Proposition \ref{prop-subdivision} \\
$\Delta^{\parm}_{\bm x ,r}$& the subdivision of $\Delta_n$ induced by the stabilization morphism times $\mathfrak{ev}_{n+1}$ from the space of tropical relative stable maps to a parametrized target, with ramification data $\bm x$ and additional $r$ marked points & Proposition \ref{prop-subdivision} \\

\hline
\end{tabular}
\end{table}

\newpage 
\bibliographystyle{siam}
\bibliography{GeomTropRSM}

\begin{thebibliography}{10}

\bibitem{ACP}
{\sc D.~{Abramovich}, L.~{Caporaso}, and S.~{Payne}}, {\em The tropicalization
  of the moduli space of curves}, Ann. Sci. \'Ec. Norm. Sup\'er., 48 (2015),
  pp.~765--809.

\bibitem{AC11}
{\sc D.~Abramovich and Q.~Chen}, {\em {Stable logarithmic maps to
  Deligne-Faltings pairs. II.}}, Asian J. Math, 18 (2014), pp.~465--488.

\bibitem{ACMUW}
{\sc D.~Abramovich, Q.~Chen, S.~Marcus, M.~Ulirsch, and J.~Wise}, {\em
  Skeletons and fans of logarithmic structures}, in Nonarchimedean and Tropical
  Geometry, M.~Baker and S.~Payne, eds., (To appear), p.~arXiv:1503.04343.

\bibitem{ACV}
{\sc D.~Abramovich, A.~Corti, and A.~Vistoli}, {\em Twisted bundles and
  admissible covers}, Comm. Alg., 31 (2003), pp.~3547--3618.

\bibitem{AK00}
{\sc D.~Abramovich and K.~Karu}, {\em Weak semistable reduction in
  characteristic 0}, Invent. Math., 139 (2000), pp.~241--273.

\bibitem{AMW12}
{\sc D.~Abramovich, S.~Marcus, and J.~Wise}, {\em Comparison theorems for
  {G}romov-{W}itten invariants of smooth pairs and of degenerations}, Ann.
  Inst. Fourier, 64 (2014), pp.~1611--1667.

\bibitem{BCM}
{\sc A.~Bertram, R.~Cavalieri, and H.~Markwig}, {\em {Polynomiality, wall
  crossings and tropical geometry of rational double Hurwitz cycles.}}, J.
  Comb. Theory, Ser. A, 120 (2013), pp.~1604--1631.

\bibitem{BBM}
{\sc B.~Bertrand, E.~Brugall{\'e}, and G.~Mikhalkin}, {\em Tropical open
  {H}urwitz numbers}, Rend. Semin. Mat. Univ. Padova, 125 (2011), pp.~157--171.

\bibitem{BHV01}
{\sc L.~Billera, S.~Holmes, and K.~Vogtmann}, {\em Geometry of the space of
  phylogenetic trees}, Adv. Appl. Math., 27 (2001), pp.~733--767.

\bibitem{CHMR14}
{\sc R.~Cavalieri, S.~Hampe, H.~Markwig, and D.~Ranganathan}, {\em {Moduli
  spaces of rational weighted stable curves and tropical geometry.}}, {Forum
  Math. Sigma}, 4 (2016), p.~e9.

\bibitem{CJM1}
{\sc R.~Cavalieri, P.~Johnson, and H.~Markwig}, {\em Tropical {H}urwitz
  numbers}, J. Alg. Comb., 32 (2010), pp.~241--265.

\bibitem{CMR14}
{\sc R.~Cavalieri, H.~Markwig, and D.~Ranganathan}, {\em Tropicalizing the
  space of admissible covers}, Math. Ann., 364 (2016), pp.~1275--1313.

\bibitem{Che10}
{\sc Q.~{Chen}}, {\em {Stable logarithmic maps to Deligne-Faltings pairs I.}},
  Ann. Math., 180 (2014), pp.~455--521.

\bibitem{Ful93}
{\sc W.~Fulton}, {\em {Introduction to toric varieties. The 1989 William H.
  Roever lectures in geometry.}}, Princeton, NJ: Princeton University Press,
  1993.

\bibitem{FS97}
{\sc W.~Fulton and B.~Sturmfels}, {\em Intersection theory on toric varieties},
  Topology, 36 (1997), pp.~335--353.

\bibitem{GKM07}
{\sc A.~Gathmann, M.~Kerber, and H.~Markwig}, {\em Tropical fans and the moduli
  space of rational tropical curves}, Comp. Math., 145 (2009), pp.~173--195.

\bibitem{GM07}
{\sc A.~Gibney and D.~Maclagan}, {\em Equations for {Chow} and {Hilbert
  Quotients}}, Algebra and Number Theory Journal, 4 (2010), pp.~855--885.
\newblock arXiv:0707.1801.

\bibitem{GJV}
{\sc I.~Goulden, D.~Jackson, and R.~Vakil}, {\em Towards the geometry of double
  {H}urwitz numbers}, Advances in Mathematics, 198 (2005), pp.~43--92.

\bibitem{GV05}
{\sc T.~Graber and R.~Vakil}, {\em Relative virtual localization and vanishing
  of tautological classes on moduli spaces of curves}, Duke Math. J., 130
  (2005), pp.~1--37.

\bibitem{Gro14}
{\sc A.~Gross}, {\em Correspondence theorems via tropicalizations of moduli
  spaces}, arXiv:1401.4626,  (2014).

\bibitem{Gro15}
\leavevmode\vrule height 2pt depth -1.6pt width 23pt, {\em Intersection theory
  on tropicalizations of toroidal embeddings}, arXiv:1510.04604,  (2015).

\bibitem{Gro10}
{\sc M.~Gross}, {\em {Mirror Symmetry for $\mathbb{P}^2$ and tropical
  geometry}}, Adv. Math., 224 (2010), pp.~169--245.

\bibitem{GS13}
{\sc M.~Gross and B.~Siebert}, {\em {Logarithmic Gromov-Witten invariants}}, J.
  Amer. Math. Soc., 26 (2013), pp.~451--510.

\bibitem{Gub13}
{\sc W.~Gubler}, {\em A guide to tropicalizations}, in Algebraic and
  combinatorial aspects of tropical geometry, vol.~589 of Contemp. Math., Amer.
  Math. Soc., Providence, RI, 2013, pp.~125--189.

\bibitem{H14}
{\sc S.~Hampe}, {\em {Combinatorics of tropical Hurwitz cycles}},
  arXiv:1407.3933,  (2014).

\bibitem{Kap93}
{\sc M.~Kapranov}, {\em {Chow quotients of Grassmannians {I}}}, in {IM
  Gel′fand Seminar}, vol.~16, 1993, pp.~29--110.

\bibitem{Kat09}
{\sc E.~Katz}, {\em Tropical intersection theory from toric varieties},
  Collect. Math., 63 (2012), pp.~29--44.
\newblock arXiv:0907.2488.

\bibitem{KKMSD}
{\sc G.~Kempf, F.~Knudsen, D.~Mumford, and B.~Saint-Donat}, {\em Toroidal
  embeddings {I}}, Lecture Notes in Mathematics, 339 (1973).

\bibitem{KM09}
{\sc M.~Kerber and H.~Markwig}, {\em Intersecting {Psi-classes} on tropical
  {$M_{0,n}$}}, Int. Math. Res. Not., 2009 (2009), pp.~221--240.

\bibitem{MS14}
{\sc D.~Maclagan and B.~Sturmfels}, {\em Introduction to tropical geometry},
  vol.~161, AMS Graduate Studies in Mathematics, 2015.

\bibitem{MR08}
{\sc H.~Markwig and J.~Rau}, {\em {Tropical descendant Gromov-Witten
  invariants}}, Manuscripta Mathematica, 129 (2009), pp.~293--335.

\bibitem{MP06}
{\sc D.~Maulik and R.~Pandharipande}, {\em A topological view of
  {G}romov--{W}itten theory}, Topology, 45 (2006), pp.~887--918.

\bibitem{Mi03}
{\sc G.~Mikhalkin}, {\em Enumerative tropical geometry in $\mathbb{R}^2$}, J.
  Amer. Math. Soc., 18 (2005), pp.~313--377.

\bibitem{Mi07}
\leavevmode\vrule height 2pt depth -1.6pt width 23pt, {\em Moduli spaces of
  rational tropical curves},  (2007), pp.~39--51.

\bibitem{OP06}
{\sc A.~Okounkov and R.~Pandharipande}, {\em Gromov-{W}itten theory, {H}urwitz
  theory, and completed cycles}, Ann. Math., 163 (2006), pp.~517--560.

\bibitem{OP}
{\sc B.~Osserman and S.~Payne}, {\em Lifting tropical intersections}, Documenta
  Mathematica, 18 (2013), pp.~121--175.

\bibitem{OR}
{\sc B.~Osserman and J.~Rabinoff}, {\em Lifting non-proper tropical
  intersections}, in Tropical and Non-Archimedean Geometry, O.~Amini, M.~Baker,
  and X.~Faber, eds., vol.~605, 2011, pp.~15--44.

\bibitem{Over15}
{\sc D.~P. Overholser}, {\em Descendent tropical mirror symmetry for
  $\mathbb{P}^2$}, arXiv:1504.06138,  (2015).

\bibitem{Pay09}
{\sc S.~Payne}, {\em Analytification is the limit of all tropicalizations},
  Math. Res. Lett., 16 (2009), pp.~543--556.

\bibitem{Pay15}
{\sc S.~Payne}, {\em Topology of nonarchimedean analytic spaces and relations
  to complex algebraic geometry}, Bull. Amer. Math. Soc., 52 (2015),
  pp.~223--247.

\bibitem{R15b}
{\sc D.~Ranganathan}, {\em Moduli of rational curves in toric varieties and
  non-archimedean geometry}, arXiv:1506.03754,  (2015).

\bibitem{SS04a}
{\sc D.~Speyer and B.~Sturmfels}, {\em The tropical {Grassmannian}}, Advances
  in Geometry, 4 (2004), pp.~389--411.

\bibitem{Tev07}
{\sc J.~Tevelev}, {\em Compactifications of subvarieties of tori}, American J.
  Math., 129 (2007), pp.~1087--1104.

\bibitem{Thu07}
{\sc A.~Thuillier}, {\em {G{\'e}om{\'e}trie toro{\"\i}dale et g{\'e}om{\'e}trie
  analytique non archim{\'e}dienne. Application au type d'homotopie de certains
  sch{\'e}mas formels}}, Manuscripta Mathematica, 123 (2007), pp.~381--451.

\bibitem{U13}
{\sc M.~Ulirsch}, {\em Functorial tropicalization of logarithmic schemes: The
  case of constant coefficients}, arXiv:1310.6269,  (2013).

\bibitem{U14}
\leavevmode\vrule height 2pt depth -1.6pt width 23pt, {\em Tropical geometry of
  moduli spaces of weighted stable curves}, J. Lond. Math. Soc., 92 (2015),
  pp.~427--450.

\bibitem{Vak08}
{\sc R.~Vakil}, {\em The moduli space of curves and {G}romov--{W}itten theory},
  in Enumerative invariants in algebraic geometry and string theory, Springer,
  2008, pp.~143--198.

\end{thebibliography}

\end{document}